\documentclass[leqno]{article}

\usepackage[frenchb,english]{babel}
\usepackage[T1]{fontenc}
\usepackage{amsmath}
\usepackage{amssymb}
\usepackage{amsfonts}
\usepackage{enumerate}
\usepackage{vmargin}
\usepackage[all]{xy}
\usepackage{mathrsfs}
\usepackage{mathtools}
\usepackage{lmodern}
\usepackage{slashed}
\usepackage{ulem}
\usepackage[colorlinks=true,linkcolor=blue,pagebackref=true]{hyperref}%
\setmarginsrb{3cm}{3cm}{3.5cm}{3cm}{0cm}{0cm}{1.5cm}{3cm}
\usepackage{comment}



\footskip1.3cm

\newcommand{\C}{\mathbb{C}}

\newcommand{\E}{\ensuremath{\mathbb{E}}}

\newcommand{\N}{\ensuremath{\mathbb{N}}}
\newcommand{\B}{\mathrm{B}} 
\let\H\relax 
\newcommand{\H}{\mathrm{H}}
\newcommand{\K}{\ensuremath{\mathbb{K}}}

\let\L\relax 
\newcommand{\L}{\mathrm{L}}

\newcommand{\Q}{\mathbb{Q}}


\newcommand{\scr}{\mathscr}

\newcommand{\loc}{\mathrm{loc}} 



\let\cal\relax
\newcommand{\cal}{\mathcal}
\newcommand{\Z}{\ensuremath{\mathbb{Z}}}
\newcommand{\R}{\ensuremath{\mathbb{R}}}

\newcommand{\W}{\mathrm{W}}


\newcommand{\Hor}{\mathrm{H\"or}}
\newcommand{\Id}{\mathrm{Id}}

\newcommand{\la}{\langle}
\newcommand{\ra}{\rangle}


\renewcommand{\leq}{\ensuremath{\leqslant}}
\renewcommand{\geq}{\ensuremath{\geqslant}}
\newcommand{\qed}{\hfill \vrule height6pt  width6pt depth0pt}
\newcommand{\bnorm}[1]{ \big\| #1  \big\|}

\newcommand{\Bgnorm}[1]{ \Bigg\| #1  \Bigg\|}
\newcommand{\norm}[1]{\left\Vert#1\right\Vert}

\newcommand{\co}{\colon}

\newcommand{\ot}{\otimes}
\newcommand{\ovl}{\overline}



\newcommand{\dsp}{\displaystyle}
\let\i\relax 
\newcommand{\i}{\mathrm{i}}
\newcommand{\ov}{\overset}

\newcommand{\UMD}{\mathrm{UMD}}


\newcommand{\epsi}{\varepsilon}
\renewcommand{\d}{\mathop{}\mathopen{}\mathrm{d}} 
\newcommand{\e}{\mathrm{e}} 
\renewcommand{\d}{\mathop{}\mathopen{}\mathrm{d}}


\DeclareMathOperator{\dom}{dom} 
\let\Re\relax 
\DeclareMathOperator{\Re}{Re} 
\let\Im\relax 
\DeclareMathOperator{\Im}{Im} 
\DeclareMathOperator*{\esssup}{esssup} 
\DeclareMathOperator{\ord}{ord}

\DeclareMathOperator{\pv}{p.v.} 
\selectlanguage{english}
\newtheorem{thm}{Theorem}[section]
\newtheorem{defi}[thm]{Definition}
\newtheorem{prop}[thm]{Proposition}

\newtheorem{cor}[thm]{Corollary}
\newtheorem{lemma}[thm]{Lemma}

\newtheorem{remark}[thm]{Remark}
\newtheorem{example}[thm]{Example}

\newenvironment{proof}[1][]{\noindent {\it Proof #1} : }{\hbox{~}\qed
\smallskip
}

\usepackage{tocloft}
\setlength{\cftbeforesecskip}{0pt}
		
\numberwithin{equation}{section}
\usepackage[nottoc,notlot,notlof]{tocbibind}

\let\OLDthebibliography\thebibliography
\renewcommand\thebibliography[1]{
  \OLDthebibliography{#1}
  \setlength{\parskip}{0pt}
  \setlength{\itemsep}{0pt plus 0.3ex}
}


\newcommand\reallywidehat[1]{\arraycolsep=0pt\relax%
\begin{array}{c}
\stretchto{
  \scaleto{
    \scalerel*[\widthof{\ensuremath{#1}}]{\kern-.5pt\bigwedge\kern-.5pt}
    {\rule[-\textheight/2]{1ex}{\textheight}} 
  }{\textheight} %
}{0.5ex}\\           
#1\\                 
\rule{-1ex}{0ex}
\end{array}
}

\begin{document}
\selectlanguage{english}
\title{\bfseries{Functional calculus and semilinear evolution equations for the Taibleson operator on non-Archimedean local fields}}
\date{}
\author{\bfseries{C\'edric Arhancet and Christoph Kriegler}}
\maketitle


\begin{abstract}
For any non-Archimedean local field $\mathbb{K}$ and any integer $n \geq 1$, we show that the Taibleson operator admits a bounded $\H^\infty(\Sigma_\theta)$ functional calculus on the Bochner space $\L^p(\mathbb{K}^n,Y)$ for any $\UMD$ Banach function space $Y$ and any angle $\theta > 0$, where $\Sigma_\theta=\{ z \in \mathbb{C}^*: |\arg z| < \theta \}$ and $1 < p < \infty$. Moreover, we prove that it even admits a bounded H\"ormander functional calculus of order $\frac{3}{2}$. In our study, we explore harmonic analysis on locally compact Spector-Vilenkin groups and establish the $R$-boundedness of a family of convolution operators. Our results contribute to the theory of functional calculi for operators acting on vector-valued $\L^p$-spaces over totally disconnected spaces. As an application, we obtain maximal regularity results and well-posedness for a class of evolution equations driven by the Taibleson operator.
\end{abstract}


\makeatletter
 \renewcommand{\@makefntext}[1]{#1}
 \makeatother
 \footnotetext{
 2020 {\it Mathematics subject classification:} 46S10, 47S10, 11S80. 
\\
{\it Key words}: non-Archimedean local fields, Spector-Vilenkin locally compact groups, H\"ormander functional calculus, $\H^\infty(\Sigma_\theta)$ functional calculus, $R$-boundedness.}

{
  \hypersetup{linkcolor=blue}
 \tableofcontents
}


\section{Introduction}

Over the past four decades, the realm of $q$-adic analysis has received growing attention due to its interdisciplinary applications spanning fields such as medicine, biology, and physics. For an in-depth exploration of advancements in $q$-adic analysis and its applications, we refer to the books \cite{AKS10}, \cite{Koc01}, \cite{KKZ18}, \cite{Tai75}, \cite{VVZ94}, \cite{Zun16}, \cite{Zun25} and to the surveys \cite{BrF93} and \cite{DKKVZ17}. 
 Just as phenomena in nature are traditionally modeled using complex numbers and differential equations, such as the heat equation, certain processes, including fluid dynamics in porous media \cite{KOJ16a}, \cite{KOJ16b}, macromolecules \cite{ABO04}, spin glass models \cite{ABK99}, and protein dynamics \cite{ABZ14} \cite{BiZ23} can be effectively described using non-Archimedean local fields, such as the field of $q$-adic numbers. Mathematically, the majority of these models describe the time evolution of a complex system using a so-called <<master equation>>, which is a type of parabolic equation. This equation governs the evolution of a transition function for a Markov process in an ultrametric space. In recent years, numerous topics have been explored from the perspective of ultrametric analysis, including the Navier-Stokes equations \cite{KhK20}, quantum physics \cite{VlV89}, \cite{Zun22a}, \cite{Zun24b}, quantum field theory \cite{FGZ22} and neural networks \cite{ZaZ23}, \cite{Zun24a}. We refer to \cite{Zun22b} for a concise and accessible introduction to $q$-adic analysis.

A cornerstone of ultrametric analysis, the Taibleson operator $D^\alpha$ is an ultrametric counterpart to the fractional Laplace operator $(-\Delta)^\alpha$ on $\R^n$. We refer to \cite{Sti19}, \cite{LPG20} and references therein for information on the latter operator. Recall that one method to define this linear operator is to use the formula \cite[Theorem 1]{Sti19}
\begin{equation}
\label{Frac-Lap}
(-\Delta)^\alpha f(x)
=\frac{4^\alpha\Gamma(\frac{n}{2}+\alpha)}{\pi^{\frac{n}{2}}\Gamma(-\alpha)}\pv\int_{\R^n}\frac{f(y)-f(x)}{|y-x|^{2\alpha+n}}\d y, \quad 0<\alpha <1, f \in \cal{S}(\R^n),x \in \R^n.
\end{equation}
If $\K$ is a non-Archimedean local field, for instance the field $\mathbb{Q}_q$ of $q$-adic numbers, i.e.~a non-discrete totally disconnected locally compact topological field, the Taibleson operator (or Vladimirov operator) is defined by the formula
\begin{equation}
\label{def-Vlad-Taibleson}
(D^\alpha f)(x)
\ov{\mathrm{def}}{=} \frac{1-q^\alpha}{1-q^{-\alpha - n}} \int_{\mathbb{K}^n} \frac{f(y) - f(x)}{\norm{y-x}_{\mathbb{K}^n}^{\alpha + n}} \d y, \quad \alpha > 0,\: x \in \mathbb{K}^n,
\end{equation}
for any suitable function $f \co \mathbb{K}^n \to \mathbb{C}$, where $q$ denotes the cardinality of the residue field of $\K$ and where the integral is understood in the improper sense. See \cite[Section 9.2.2]{AKS10}, \cite[(2.8)]{Koc01} for the case $n=1$ and \cite[(6) p.~331]{RoZ08} for the case $\mathbb{Q}_q^n$. Here, we use the notation
\begin{equation}
\label{norm-max}
\norm{x}_{\K^n}
\ov{\mathrm{def}}{=} \max_{1 \leq i \leq n} |x_i|_\K, \quad x=(x_1,\ldots,x_n) \in  \K^n.
\end{equation}
This operator can be seen as a Fourier multiplier with symbol $\norm{\cdot}_{\mathbb{K}^n}^\alpha$, that is,
\begin{equation}
\label{Taibleson-Fourier}
\left( D^{\alpha }f \right) (x)
=\mathcal{F}^{-1}\big( \norm{\cdot}_{\mathbb{K}^n}^\alpha\hat{f}\, \big), \quad \alpha > 0,\: x \in \mathbb{K}^n,
\end{equation}
where we use the Fourier transform $\cal{F}$. We also refer to \cite[Definition 1.12]{DeV22} for a generalization to a graded $\mathbb{K}$-Lie group. Thus, paralleling the critical role of the Laplacian in physics and geometry, the Taibleson operator stands as a central figure in the investigation of non-Archimedean analysis and in modeling natural phenomena with ultrametric spaces. 

Our main result is the following theorem, where $\Sigma_\theta \ov{\mathrm{def}}{=} \{z \in \mathbb{C}^* : |\arg(z)| < \theta\}$ (see Figure \ref{figure-sector}).

\begin{thm}
\label{Main-intro}
Let $\K$ be a non-Archimedean local field and let $n \geq 1$ be an integer. Consider a $\UMD$ Banach function space $Y$. Suppose that $1 < p < \infty$. The Taibleson operator $D^\alpha \ot \Id_Y$ admits a bounded $\H^\infty(\Sigma_\theta)$ functional calculus for any angle $\theta > 0$ on the Bochner space $\L^p(\K^n,Y)$, as well as a H\"ormander $\Hor^s_2(\R_+^*)$ functional calculus for any $s > \frac32$.
\end{thm}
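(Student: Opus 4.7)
The key structural observation is that the Taibleson symbol $\norm{\cdot}_{\K^n}^\alpha$ only takes the discrete values $\{q^{k\alpha} : k \in \Z\}$ on the complement of a null set. Writing $S_k \ov{\mathrm{def}}{=} \{\xi \in \K^n : \norm{\xi}_{\K^n} = q^k\}$ and letting $P_k$ denote the Fourier multiplier with symbol $\mathbf{1}_{S_k}$, formula \eqref{Taibleson-Fourier} yields a discrete spectral resolution
\begin{equation*}
D^\alpha = \sum_{k \in \Z} q^{k\alpha} P_k, \qquad m(D^\alpha) = \sum_{k \in \Z} m(q^{k\alpha}) P_k,
\end{equation*}
where the $P_k$ are mutually orthogonal projections summing to the identity on $\L^2(\K^n)$. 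The functional calculus problem is thereby transferred to a quantitative analysis of how the family $\{P_k\}_{k \in \Z}$ acts on $\L^p(\K^n)$.

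The analytic core of the proof is then to establish that $\{P_k\}_{k \in \Z}$ is $R$-bounded on $\L^p(\K^n)$ for $1 < p < \infty$. I would exploit the Spector-Vilenkin structure: the balls $B_k \ov{\mathrm{def}}{=} \{\xi \in \K^n : \norm{\xi}_{\K^n} \leq q^k\}$ form a nested family of compact open subgroups of the self-dual group $\K^n$, and convolution by a suitably normalized $\mathbf{1}_{B_k}$ realizes a conditional expectation $\E_k$ with respect to the $\sigma$-algebra of $B_{-k}$-invariant sets. Each $P_k$ can be written as a difference $\E_k - \E_{k-1}$ of such conditional expectations. The $R$-boundedness of this family of martingale differences follows from the UMD property of $\L^p(\K^n)$, combined with a Stein-type (or Burkholder-Gundy) square function estimate adapted to the ultrametric filtration.

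Granted this $R$-bound, Kahane's contraction principle gives, for any bounded Borel function $m$ on $\R_+^*$,
\begin{equation*}
\norm{m(D^\alpha)}_{\L^p \to \L^p} \lesssim R(\{P_k\}_{k \in \Z}) \sup_{k \in \Z} |m(q^{k\alpha})|.
\end{equation*}
Specializing $m \in \H^\infty(\Sigma_\theta)$ yields the bounded $\H^\infty(\Sigma_\theta)$ calculus for every $\theta > 0$. For the H\"ormander statement, I would combine this estimate with a smooth dyadic partition of unity on $\R_+^*$ and invoke the one-dimensional Sobolev embedding $\W^{s,2}(\R) \hookrightarrow \L^\infty(\R)$, valid for $s > 1/2$; an extra derivative appears to be needed in order to reassemble the pieces in an $R$-bounded way, which is what should push the threshold up to $s > 3/2$. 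Critically, since the spectrum is indexed by a single integer parameter $k$ regardless of the ambient dimension $n$, there is no dimensional term $n/2$ in the H\"ormander index, explaining the $n$-independent value $3/2$.

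The principal obstacle is the $R$-boundedness of the family $\{P_k\}_{k \in \Z}$. Each $P_k$ is individually bounded on $\L^p(\K^n)$ by standard martingale arguments, but a uniform vector-valued estimate requires a genuine Stein-type inequality for the Spector-Vilenkin filtration, exploiting the totally disconnected structure of $\K^n$. All the remaining steps are comparatively soft consequences, so the non-Archimedean harmonic analysis announced in the abstract is where the decisive work must take place.
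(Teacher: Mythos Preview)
Your approach is correct and takes a genuinely different, more elementary route than the paper. The paper proceeds via semigroup theory: it first obtains an $\H^\infty$ calculus at angle $>\frac{\pi}{2}$ from a general positive-contraction result (Theorem~\ref{Th-fun-dilation}), then proves the complex-time kernel estimate $\norm{K_z}_{\L^1(\K^n)} \lesssim |z|/\Re z$ (uniform in $n$), uses this together with a radially-decreasing-majorant and maximal-function argument on Spector--Vilenkin groups to show that $(T_z)_{z \in \Sigma_\theta}$ is $R$-bounded on every sector, and finally invokes the abstract Kalton--Weis and Kriegler--Weis machinery (Theorems~\ref{thm-R-anal} and~\ref{prop-KrW18}) to pull the $\H^\infty$ angle down to $0$ and obtain H\"ormander order $>\beta+\frac12$ with $\beta=1$. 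Your observation that the spectral projections $P_k$ are exactly the martingale differences $\E_k-\E_{k-1}$ for the coset filtration of the compact open subgroups is the decisive shortcut: Burkholder's square-function inequality immediately gives the two-sided Littlewood--Paley equivalence $\norm{f}_p \approx \bnorm{(\sum_k |P_k f|^2)^{1/2}}_p$, whence $\norm{m(D^\alpha)}_{p\to p} \lesssim \sup_k |m(q^{k\alpha})| \leq \norm{m}_\infty$ for \emph{every} bounded Borel $m$. This is strictly stronger than both claims in the theorem --- you obtain a bounded $\L^\infty$ functional calculus --- so your hedging about ``an extra derivative'' being needed to reach $s>\frac32$ undersells your own argument: no smoothness of $m$ is required at all. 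Two small corrections of emphasis: what you actually use is the \emph{unconditionality} of the martingale-difference decomposition (Burkholder), not merely $R$-boundedness of the individual $P_k$; and your route extends verbatim to $\L^p(\K^n,Y)$ for any $\UMD$ space $Y$, which is more general than the paper's restriction to $\UMD$ Banach function spaces (imposed by its lattice maximal-function machinery). What the paper's approach buys in exchange is a transferable template --- kernel estimates plus $R$-boundedness of convolution families --- that would apply to operators on Spector--Vilenkin groups whose spectral projections are \emph{not} martingale differences.
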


In this introduction and later on in the paper, $\lesssim$ stands for an inequality up to a constant. The first part of our result says that
\begin{equation}
\label{}
\norm{f(D^\alpha \ot \Id_Y)}_{\L^p(\K^n,Y) \to \L^p(\K^n,Y)}
\lesssim \norm{f}_{\H^\infty(\Sigma_\theta)}
\end{equation} 
for any function $f$ in the algebra $\H^\infty_0(\Sigma_\theta)$ of all bounded holomorphic functions $f \co \Sigma_\theta \to \mathbb{C}$ which satisfy suitable decay estimates at 0 and at $\infty$, i.e.~for which there exist two positive real numbers $s,C>0$ such that
\begin{equation}
\label{ine-Hinfty0}
\vert f(z)\vert
\leq C\min\{|z|^s,|z|^{-s}\}, \quad z \in \Sigma_\theta.
\end{equation}
The operator $f(D^\alpha \ot \Id_Y)$ is defined by an integral over the boundary of a sector, see \eqref{2CauchySec}. The value $\omega_{\H^\infty}(D^\alpha \ot \Id_Y) = 0$ of the infimum of angles for the $\H^\infty(\Sigma_\theta)$ functional calculus obtained in Theorem \ref{Main-intro} is optimal.


In \cite[p.~264]{Tai70b} (see also \cite[Chapter VI, pp.~219 and 225]{Tai75} and \cite{Tai68,Tai70a}), Taibleson established the striking result that any essentially bounded measurable radial function \( m \colon \mathbb{K}^n \to \mathbb{C} \), i.e.~\( m(x) = \tilde{m}(\norm{x}_{\K^n}) \) for any \( x \in \mathbb{K}^n \), defines a bounded Fourier multiplier \( M_m \co \L^p(\mathbb{K}^n) \to \L^p(\mathbb{K}^n) \), \( f \mapsto \mathcal{F}^{-1}(m \hat{f}) \), on the scalar-valued \( \L^p \)-spaces \( \L^p(\mathbb{K}^n) \) for all $1 < p < \infty$. It is important to emphasize, however, that the proof of this result provides a bound of the form $\|M_m\|_{\L^p(\K^n) \to \L^p(\K^n)} \leq C_p \norm{m}_{\L^\infty(\K^n)}$ (suppose that $\norm{m}_{\L^\infty(\K^n)} \leq 1$ in the proof of \cite[Lemma 1.9, p.~223]{Tai75} and use homogeneity). As a consequence, our results in the scalar case $Y=\mathbb{C}$ can be deduced from Taibleson's theorem.  Finally, we refer to \cite[Chapter VI, pp.~219 and 225]{Tai75} for further discussion and a comparison of this result with H\"ormander's multiplier theorem in the Euclidean setting.

The result falls within the framework of symmetric contraction semigroups of operators acting on classical (or noncommutative) $\L^p$-spaces. If $(T_t)_{t \geq 0}$ is such a semigroup with generator $-A$ acting on a classical $\L^p$-space $\L^p(\Omega)$, then the operator $A$ is positive and selfadjoint on the Hilbert space $\L^2(\Omega)$. The spectral theorem thus provides a functional calculus $\mathscr{L}^\infty(\mathbb{R}_{+}) \to \B(\L^2(\Omega))$, $f \mapsto f(A)$, where the operator $f(A) \co \L^2(\Omega) \to \L^2(\Omega)$ is well-defined and bounded on the complex Hilbert space $\L^2(\Omega)$. A natural question is to determine whether $f(A)$ induces a bounded operator on the Banach space $\L^p(\Omega)$ (or more generally on the Bochner space $\L^p(\Omega,X)$) under suitable conditions on $f$. It is worth noting that the Laplacian $-\Delta$ also admits an $\H^\infty(\Sigma_\theta)$ functional calculus for any $\theta > 0$ on the Banach space $\L^p(\R^n)$ or even on any Bochner space $\L^p(\R^n,X)$ for any $\UMD$ Banach space $X$ by \cite[Theorem 10.2.25 p.~391]{HvNVW23}.

A specific instance of this problem is to determine whether the condition $f \in \Hor^s_2(\R_+^*)$ for some $s > 0$ is sufficient to ensure that the operator $f(A)$ is bounded on the Banach space $\L^p(\Omega)$. When this holds, we say that the operator $A$ admits a bounded H\"ormander functional calculus of order $s$. The function space $\Hor^s_2(\R_+^*)$ is formally defined later in \eqref{hor-2}. It is known that the Laplacian $-\Delta$ admits a H\"ormander $\Hor^s_2(\R_+^*)$ functional calculus of order $s > \frac{n}{2}$ on the space $\L^p(\R^n)$, which is a sharp condition for all $1 < p < \infty$.

This question has been extensively studied in the literature. A fundamental result was established by Christ, Mauceri and Meda, who proved in \cite[Theorem 1 p.~74]{Chr91} and \cite{MaM90} that the sub-Laplacian of a stratified Lie group $G$ admits a bounded H\"ormander functional calculus whenever $s > \frac{Q}{2}$, where $Q$ is the homogeneous dimension of $G$. An alternative proof of this result was later provided by Sikora in \cite{Sik92}. While the threshold $\frac{Q}{2}$ naturally arises in this setting, it does not always represent the most optimal bound. A significant breakthrough was made by Hebisch in \cite[Theorem 1.1 p.~233]{Heb93}, where he demonstrated a refined condition for sub-Laplacians on some products of Heisenberg-type groups. His result extends the earlier work of M\"uller and Stein \cite{MuS94}, who analyzed direct products of Heisenberg groups and Euclidean spaces. Specifically, Hebisch established that the sufficient condition $s > \frac{Q}{2}$ could be advantageously replaced by $s > \frac{d}{2}$, where $d$ denotes the topological dimension of the group. Similar improvements were obtained by Cowling and Sikora for the compact Lie group $\mathrm{SU}(2)$ in \cite[Theorem 1.1 p.~2]{CoS01}, as well as by Martini and M\"uller in \cite{MaM13} for the six-dimensional free 2-step nilpotent Lie group $N_{3,2}$. Further results along these lines can be found in \cite{MaM14b}, \cite{Mar15}, and \cite{Nie23}, which explore other classes of groups. In fact, it was later established in \cite[Theorem 3]{MaM16} that the bound $\frac{d}{2}$ is sharp for any 2-step stratified Lie group of topological dimension $d$. Moreover, Duong's approach in \cite[Theorem 3]{Duo96} highlights a connection between the $\L^1$-norm of complex-time heat kernels and the regularity order $s$ required in the H\"ormander functional calculus for sub-Laplacians on stratified Lie groups.  

On a compact Riemannian manifold $M$ of dimension $d \geq 2$ without boundary, an elliptic, self-adjoint, positive second-order differential operator is also known to admit a bounded H\"ormander functional calculus whenever $s > \frac{d}{2}$, as shown in \cite[Theorem 3.1, p.~723]{SeS89} (see also \cite[Theorem 5.3.1 p.~155]{Sog17}). Alternative proofs can be found in \cite[p.~469]{DOS02} and \cite{Xu07}. More recently, in \cite{MMN23} it is demonstrated that for a broad class of second-order differential operators associated with a sub-Riemannian structure on a smooth $d$-dimensional manifold, the condition $ s > \frac{d}{2}$ is not only sufficient but also necessary for a bounded H\"ormander functional calculus to hold.

The literature on this topic is extensive, covering a wide range of operators. Notable contributions include results on sub-Laplacians of Lie groups with polynomial growth \cite{Ale94}, operators on homogeneous metric measure spaces \cite{Blu03}, \cite{DOS02}, \cite{KuU15} and the Kohn Laplacian on some spheres \cite{CCMS17}. 
Other significant works investigate Schr\"odinger operators \cite{Heb90}, Grushin operators \cite{MaM14a}, negative generators of symmetric semigroups \cite{Med90}, the harmonic oscillator on the Moyal-Groenewold plane \cite{AHKP25}, the Hodge Laplacian on the Heisenberg group \cite{MPR07}, \cite{MPR15} and Laplacians on groups with exponential growth \cite{CGHM94}. 


It is a remarkable fact that the assumption in our result does not depend on the dimension $n$. To the best of our knowledge, this dimension-free phenomenon appears to be new.

\paragraph{Application to maximal regularity of evolution equations} 

Our study establishes the maximal regularity of the Taibleson operator $D^\alpha \ot \Id_Y$. We will outline the broad context of this concept and direct readers to \cite{Den21}, \cite{KuW04}, \cite{HvNVW23} (see also \cite{Are04}) and references therein for a comprehensive discussion of this classical topic that has received considerable attention.

If $A$ is a closed densely defined linear operator on a Banach space $X$, we can consider the following inhomogeneous autonomous Cauchy problem
\begin{equation}
\label{Cauchy-problem}
\begin{cases}
y'(t)+A(y(t))=f(t), \quad t \in ]0,T[ \\
y(0)=x_0
\end{cases},
\end{equation}
where $f \co ]0,T[ \to X$ is a function whose class belongs to the Bochner space $\L^1(]0,T[,X)$, with $0 < T < \infty$ and $x_0 \in X$. 
If $-A$ is the generator of a strongly continuous semigroup $(T_t)_{t \geq 0}$ on a Banach space $X$ then by \cite[Theorem G.3.2 p.~531]{HvNVW18} or \cite[Proposition 3.1.16 p.~118]{ABHN11} the Cauchy problem \eqref{Cauchy-problem} admits a unique \textit{mild} solution given by the <<variation of constants formula>>
\begin{equation}
\label{sol-Cauchy}
y(t) 
= T_t(x_0) +\int_{0}^{t} T_{t-s}(f(s)) \d  s, \quad 0 \leq t < T. 
\end{equation}
Suppose that $1 < q < \infty$. If the right-hand side $f$ of \eqref{Cauchy-problem} belongs to the Bochner space $\L^q(]0,T[,X)$, <<optimal regularity>> would imply that both $y'$ and $A y$ should also belong to the space $\L^q(]0,T[,X)$. We are led to say, following \cite[Definition 17.2.4 p.~577]{HvNVW23} \cite[Proposition 17.1.3 p.~572]{HvNVW23} and \cite[Proposition 17.2.11 p.~581]{HvNVW23}, that the operator $A$ has maximal $\L^q$-regularity on $]0,T[$ if for any vector-valued function $f \in \L^q(]0,T[,X)$ the mild solution of the Cauchy problem \eqref{Cauchy-problem} with $x_0=0$ is a <<strong $\L^q$-solution>>, i.e.~satisfies the following two conditions: 
\begin{enumerate}
	\item $y$ takes values in $\dom A$ almost everywhere,
	\item the function $t \mapsto A(y(t))$ belongs to the space $\L^q(]0,T[,X)$.
\end{enumerate}
A classical result of Dore \cite[Theorem 17.2.15 p.~586]{HvNVW23} says that this property implies that the operator $-A$ generates an analytic semigroup on $X$. Moreover, this property is independent of $1 < q < \infty$ by \cite[Theorem 17.2.31 p.~604]{HvNVW23}. 

A consequence of maximal $\L^q$-regularity is the is the following a priori estimate \cite[p.~577]{HvNVW23}
\begin{equation}
\label{}
\norm{Ay}_{\L^q(]0,T[,X)}
 \leq C\norm{f}_{\L^q(]0,T[ ,X)},
\end{equation}
where $C \geq 0$ denotes a constant independent of $f$. Using linearization techniques, this estimate allows an effective approach to quasilinear problems via the contraction mapping principle. For a detailed description of this technique, we refer to \cite[Chapter 18]{HvNVW23}.

It is well-known \cite[Corollary 17.3.6 p.~631]{HvNVW23} (combined with \cite[Lemma 17.2.16 p.~590]{HvNVW23}) that an operator admitting a bounded $\H^\infty(\Sigma_\theta)$ functional calculus of angle $\theta <\frac{\pi}{2}$ on a $\UMD$ Banach space $X$ has maximal $\L^p$-regularity on $]0,T[$. Consequently, our main result (Theorem \ref{Main-intro}) implies that the operator $D^\alpha \ot \Id_Y$, where $D^\alpha$ is defined in \eqref{def-Vlad-Taibleson}, has maximal $\L^q$-regularity on $]0,T[$ on the Banach space $\L^p(\K^n,Y)$ for any $1 < p,q < \infty$ and any $\UMD$ Banach function space $Y$. In the case $Y=\mathbb{C}$ and $f=0$, we obtain the equation 
\begin{equation}
\label{ultra-Heat-equation}
\frac{\partial y}{\partial t}(t,s) + D^\alpha y(t,s) 
= 0, \quad s \in \K^n, t > 0,
\end{equation}
which is the ultrametric counterpart of the classical heat equation. This equation is the <<master equation>> of some concrete models, in the case of a <<linear landscape>>, see \cite[p.~66 and p.~117]{KKZ18}, \cite[(5) p.~180]{ABKO02} and \cite[(3)]{ABO03} (see also \cite[(1) p.~viii]{Zun16}). In this framework, the ultrametric space $\K^n$ models the space of configurational states of a system, and the dynamics is given by a random walk on this space. The time evolution of the system is governed by the master equation and for each $t > 0$, the function $y \co \K^n \to \R^+$, $x \mapsto y(x,t)$ is a probability density distribution. This means that if $B$ is a measurable subset of $\K^n$ then $\int_B y(x,t) \d x$ is the probability of finding the system in $B$ at the instant $t$.

Moreover, we obtain the following consequence of Theorem \ref{Main-intro}. This result can be applied to any $\L^r$-space $Y=\L^r(\Omega)$ with $1 < r < \infty$. We refer to \eqref{angle-R-sectoriality} for the definition of the angle of $R$-sectoriality $\omega_R(B)$.

\begin{cor}
\label{cor-maximal-regularity}
Let $D^\alpha$ be the Taibleson operator acting on the Banach space $\L^p(\K^n)$ with $1<p<\infty$. Consider a $\UMD$ Banach function space $Y$ with property $(\alpha)$. If $B$ is an $R$-sectorial operator acting on $Y$ with $\omega_R(B) < \frac{\pi}{2}$, then the operator $D^\alpha \ot \Id_Y+\Id_{\L^p(\K^n)} \ot B $ has $\L^q$-maximal regularity on the Bochner space $\L^p(\K^n,Y)$ for any $1 <q <\infty$.
\end{cor}
Note that, in this context, the abstract Cauchy problem is
\begin{equation}
\label{equ-maximal-regularity}
\begin{cases}
\frac{\partial y}{\partial t} (t,s,x) +D^\alpha_s y(t,s,x) + B_x y(t,s,x) 
& = f(t,s,x) \\ 
y(0) 
& = 0
\end{cases},
\end{equation}
where the subscript indicates the variable on which the operator acts. Note that for $B = 0$ and $Y=\mathbb{C}$, this equation identifies with the equation \eqref{ultra-Heat-equation}. The additional variable $x$, taking values in a measure space $\Omega$, allows one to model internal degrees of freedom of the system, such as age structure in population dynamics, velocity variables, phenotypic traits, or chemical states. This shows that the Taibleson operator naturally fits into coupled ultrametric-classical evolution equations.
%

As an example with a relevant non-zero $B$, we can consider the case $Y = \L^r(\R^{m+1}_+)$ and a differential operator $B=\sum_{|\gamma| = 2} a_\gamma \partial_x^\gamma$ satisfying the Lopatinskij-Shapiro condition \cite[(7.4) p.~158]{KuW04} \cite[Section 4]{DeK17}, and deduce that the Cauchy problem
\begin{equation}
\label{equ-evolution-equation} 
\begin{cases}
\frac{\partial y}{\partial t} (t,s,x) + D^\alpha_s  y(t,s,x) + \sum_{|\gamma| = 2} a_\gamma \partial_x^\gamma y(t,s,x) 
& = f(t,s,x), \quad t > 0,\:s \in \K^n, \: x \in \R^{m+1}_+ \\
y(0,s,x) 
& = 0, \quad s \in \K^n,\: x \in \R^{m+1}_+ \\ 
\sum_{|\beta| = 1} b_\beta \partial_x^\beta y(t,s,x) 
& = 0, \quad t > 0,\: s \in \K^n, \: x \in \partial\R^{m+1}_+ 
\end{cases},
\end{equation}
for a given vector-valued function $f \in \L^q(]0,T[,\L^p(\K^n,\L^r(\R^{m+1}_+)))$ has $\L^q$-maximal regularity. Consequently, the parabolic evolution equation \eqref{equ-evolution-equation} admits a solution which is almost everywhere differentiable in time and takes values in the space $\dom(D^\alpha_s \ot \Id_{\L^r(\R^{m+1}_+)} + \Id_{\L^p(\K^n)} \ot B)$. 



To prove Corollary~\ref{cor-maximal-regularity}, it suffices to replace the Dunkl operator $ A^\beta$ in \cite[Corollary 4, p.~22169]{DeK17} and \cite[Proposition 4.4 p.~2171]{DeK17} with the Taibleson operator $D^\alpha$. The same argument applies, since the operator $D^\alpha \ot \Id_Y$ admits a bounded $\H^\infty(\Sigma_\theta)$ functional calculus on the Bochner space $\L^p(\K^n,Y)$ for any angle $0 < \theta < \pi$ by Theorem~\ref{Main-intro}.

\paragraph{Application to local and global existence of solutions of semilinear evolution equations} 
Consider any $\UMD$ Banach function space $Y$ and let $\alpha > 0$. Our main Theorem \ref{Main-intro} can be applied to establish local and global existence results for semilinear parabolic evolution equations that are more general than the following master equation
\begin{equation}
\label{equ-master-equation}
\frac{\partial y}{\partial t}(t,s,x) + D^\alpha y(t,s,x) = 0,
\end{equation}
where $y \in \L^q([0,T),\L^p(\K^n,Y))$.
Indeed, as a consequence of Theorem \ref{Main-intro}, the operator $D^\alpha \ot \Id_Y$, acting on the Banach space
$X = \L^p(\K^n,Y)$, admits a bounded $\H^\infty(\Sigma_\omega)$ functional calculus for any angle
$\omega \in (0,\pi)$.
In particular, it admits bounded imaginary powers for all $\omega \in (0,\pi)$ by
\cite[Theorem 15.3.20, p.~476]{HvNVW23}.
This observation entails, as corollaries, the existence of solutions for semilinear evolution equations more general than \eqref{equ-master-equation}, as shown in Theorem \ref{thm-evolution-equation}. We also refer to \cite[Chapter~18]{HvNVW23} for closely related results.

\paragraph{Approach of the proof of Theorem \ref{Main-intro}} 
Recall that the operator $-D^\alpha$ generates a Markovian semigroup $(T_t)_{t \geq 0}$ on the space $\L^\infty(\mathbb{K}^n)$. A fairly general result (Theorem \ref{Th-fun-dilation}) concerning generators of strongly continuous semigroups of positive contractions acting on $\L^p$-spaces shows that the Taibleson operator admits a bounded $\H^\infty(\Sigma_\theta)$ functional calculus on the Banach space $\L^p(\mathbb{K}^n,Y)$ for any angle $\theta > \frac{\pi}{2}$ and any $\UMD$ Banach space $Y$.

According to an abstract result (Theorem \ref{prop-KrW18}), it is sufficient to demonstrate that the semigroup is $R$-analytic of angle $\frac{\pi}{2}$ as this reduces the angle required for the functional calculus. This probabilistic condition is stronger than the property of bounded analyticity and, roughly speaking, means that the family $(T_t)_{t > 0}$ can be extended to a bounded holomorphic family $(T_z)_{\Re z > 0}$ which is <<$R$-bounded>> on each sector $\Sigma_\theta$ for any angle $0 < \theta <\frac{\pi}{2}$. We refer to \eqref{R-boundedness} for the definition of $R$-boundedness.

We then establish in Theorem \ref{Th-angle-UMD-lattice-Vilenkin} an $R$-boundedness result for a family of convolution operators on Spector-Vilenkin groups. This result is relevant since the additive group $(\K^n,+)$ of $\mathbb{K}^n$ is an abelian locally compact Vilenkin group, and since each operator $T_z$ is a convolution operator. The proof relies on a maximal inequality and is of independent interest. 

To apply this result, it is necessary to estimate the $\L^1$-norm of the kernel $K_z$ of the convolution operator $T_z$, which leads to the estimate 
\begin{equation}
\label{Final-estimates-intro}
\norm{K_z}_{\L^1(\mathbb{K}^n)}
\lesssim_{q} \frac{|z|}{\Re z}.
\end{equation}
which is in sharp contrast with the well-known analogue estimate \cite[p.~153]{Are04} (or \cite[p.~543]{HvNVW18}) saying that 
$$
\norm{h_z}_{\L^1(\mathbb{R}^n)} 
\leq \bigg(\frac{|z|}{\Re z}\bigg)^{\frac{n}{2}}, \quad \Re z > 0
$$ 
of the kernel $h_z$ of the heat semigroup on $\R^n$. This result of $R$-analyticity, enables us to obtain the boundedness of the functional calculus for any angle $\theta >0$. For the H\"ormander functional calculus, one only needs to check the conditions of the abstract result \cite[Theorem 7.1 (1) p.~424]{KrW18} (see Theorem \ref{prop-KrW18}).

\paragraph{Locally compact Vilenkin groups} 
Initially, the groups investigated by Vilenkin in \cite{Vil63} were the zero-dimensional compact \textit{abelian} groups satisfying the second axiom of countability. These groups can be described as being the topological groups such that there exists a strictly decreasing sequence $(G_n)_{n \geq 0}$ of compact open subgroups with $\cup_{n \geq 0} G_n = G$ and $\cap_{n \geq 0} G_n = \{0\}$. These groups are precisely the infinite metrizable totally disconnected compact abelian groups. Examples of such groups include the Cantor group and countable products of finite abelian groups, see Example \ref{Ex-3}.

Nowadays, the class of locally compact \textit{abelian} groups such that there exists a strictly decreasing sequence $(G_n)_{n \in \Z}$ of compact open subgroups such that $\cup_{n \in \Z} G_n = G$ and $\cap_{n \in \Z} G_n = \{0\}$ is the class of locally compact Vilenkin groups, terminology introduced by Onneweer, see \cite[Chap.~7]{SBSW15a}. Nevertheless this class of groups was already considered by some other authors before. Indeed, we note that in the classical book \cite[p.~59]{EdG77} the locally compact Vilenkin groups are called <<groups having a suitable family of compact open subgroups>> and in the book \cite{AVDR81} the locally compact Vilenkin groups are called <<locally compact 0-dimensional topologically periodic abelian groups with second axiom of countability>>. 

Our harmonic analysis results, which we establish to prove our main result, build upon previous research efforts aimed at advancing analysis on these locally compact groups. For instance, the dyadic analysis on the Cantor group (and similar groups) is well-developed, see \cite{SWS90}, \cite{SBSW15a} and \cite{SBSW15b}.

In \cite{Wei07}, the almost everywhere convergence of Banach space-valued Vilenkin-Fourier series is investigated. In \cite{KaV22}, Hardy inequalities were explored. Multiplier theory was investigated in \cite{OnQ89}. Most recently, in \cite{DHW24}, the convergence of Fejér means on some noncommutative Vilenkin groups was examined. Actually, our results are valid for a slightly broader class of \textit{not necessarily abelian} groups, which we refer to as Spector-Vilenkin groups because they were studied by Spector in \cite{Spe70}.

It is interesting to note that compact abelian Vilenkin groups frequently emerge when attempting to prove a property for all locally compact abelian groups by reducing the problem to specific types of groups through case-by-case reasoning. See, for example \cite{Arh12} and \cite[Section 7.2]{ArK23} for illustrations.


\paragraph{Structure of the paper} 
Section \ref{sec-prelim} provides background information on local fields, operator theory, functional calculus, and harmonic analysis. Section \ref{Section-Vilenkin} explores locally compact Spector-Vilenkin groups and their radial functions. In particular, we introduce the notion of least radially decreasing majorant. In Section \ref{Section-maximal}, we establish a maximal inequality related to averaging operators on locally compact Spector-Vilenkin groups. This result plays a crucial role in Section \ref{Sec-R-boundedness-of-some-family}, where Proposition \ref{Prop-pointwise-domination} sets forth a sufficient condition for the $R$-boundedness of a family of operators acting on the $\L^p$-space $\L^p(G)$ of a Spector-Vilenkin group, dominated by the maximal operator. In Theorem \ref{Th-angle-scalar-Vilenkin}, we deduce an $R$-boundedness result on some specific family of convolution operators. Section \ref{Section-Heat-p-adic-semigroup} details our proof of the estimate \eqref{Final-estimates-intro} on the kernel. We present our principal result in Theorem \ref{Main-Th-angle-0-p-adic}. Finally, in Section \ref{sec-application-existence}, we present some applications to evolution equations.

\section{Preliminaries}
\label{sec-prelim}

We begin by providing information and background on local fields.

\paragraph{Local fields}
Let $\K$ be a commutative locally compact field. For any $x \in \K^*$, we denote by $|x|$ the module of the automorphism $\K \to \K$, $y \mapsto xy$ of the additive group of $\K$. Additionally, we set $|0| \ov{\mathrm{def}}{=} 0$. According to \cite[Proposition 12 p.~VII.21]{Bou04}, the function $\K \to \R^+$, $x \mapsto |x|$ is continuous and satisfies the multiplicative property $|x y|=|x| |y|$ for any $x,y \in \K$.

Recall that a local field is a non-discrete locally compact field. It is important to note that the term local field does not always have a uniform meaning in the literature, as multiple definitions coexist. From now on, we assume that 
$\K$ is a local field. In this case, by \cite[Proposition 13 p.~VII.22]{Bou04}, the subsets $\{x \in \K : |x| \leq K \}$ for $K>0$ define a fundamental system of compact neighbourhoods of 0 in $\K$. Furthermore, by \cite[Corollary p.~VII.22]{Bou04}, a local field is a second-countable topological space. 

If for any $x,y \in \K$ we have $|x+y| \leq \max\{|x|,|y|\}$, we say, following \cite[p.~137]{RaV99}, that $\K$ is non-Archimedean and that $|\cdot|$ is an ultrametric absolute value. In this case, by \cite[Lemma 4.14 p.~145]{RaV99} the set 
\begin{equation}
\label{equ-unique-maximal-compact-subring}
\cal{O}
\ov{\mathrm{def}}{=} \{x \in \K : |x| \leq 1 \}
\end{equation}
is the unique maximal compact subring of $\K$ and is called ring of integers \cite[p.~6]{Tai75}. Furthermore, by \cite[Lemma 4.15 p.~145]{RaV99}, the set
\begin{equation}
\label{equ-unique-maximal-ideal}
\frak{p}
\ov{\mathrm{def}}{=} \{x \in \K : |x| < 1 \}
\end{equation}
is the unique maximal ideal of $\cal{O}$ and we have $\frak{p}=\beta \cal{O}$ for some element $\beta$ of $\frak{p}$. Such an element $\beta$ is called prime. The same reference states that the quotient $\cal{O}/\frak{p}$ is isomorphic to a finite field of cardinality $q=p^m$ for some prime $p$ and some positive integer $m \geq 1$. This field is called the residue field. Finally, for any $x \in \K - \{0\}$, it follows from \cite[p.~146]{RaV99} that $|x|=q^k$ for some $k \in \Z$. This implies that $|\K^*|$ is a discrete subgroup of $\R^+_*$, where $\K^* \ov{\mathrm{def}}{=} \K - \{0\}$. We will always assume that the absolute value is normalized, that is, $|\beta| = q^{-1}$. 

Now, we describe two examples of non-Archimedean local fields.

\begin{example}[fields of $q$-adic numbers] \normalfont
\label{Ex-q-adic}
Here, $q$ denotes a prime number. Recall that the field $\Q_q$ of $q$-adic numbers is defined as the completion of the field $\Q$ of rational numbers with respect to the $q$-adic norm $|\cdot|_q$ which is defined by
\begin{equation}
\label{value-q}
|x|_q
\ov{\mathrm{def}}{=} \begin{cases}
0&\text{ if }x=0\\
\frac{1}{q^k}& \text{ if }x=q^k\frac{a}{b}
\end{cases}, \quad x \in \Q,
\end{equation}
where $a$ and $b$ are integers coprime with $q$. The field $\Q_q$ has characteristic 0. We refer to the books \cite{Gou20}, \cite{Rob00} and \cite{Zun25} for more information. A particular case of \cite[Lemma 4.17 p.~146]{RaV99} (see also \cite[p.~17]{AKS10}, \cite[Proposition 2.8 p.~39]{Fol16}) says that any non-zero $q$-adic number $x$ has a unique expansion of the form
\begin{equation}
\label{dev-1}
x
=q^\gamma\sum_{j=0}^{\infty} x_j q^j,
\end{equation} 
where $\gamma \in \Z$ and $x_j \in \{0,1,\ldots,q-1\}$. Following \cite[p.~146]{RaV99}, we call $\gamma \overset{\mathrm{def}}{=} \ord_q(x)$ the order of $x$ and set $\ord_q(0) \overset{\mathrm{def}}{=} + \infty$ .
Following \cite[p.~40]{Rob00}, the fractional part $\{x\}_q$ of an element $x$ of $\Q_q$ is the element of $\mathbb{Q}$ defined by 
$$
\{x\}_q
\ov{\mathrm{def}}{=} \begin{cases}
0 & \text{if } x=0 \text{ or }\gamma \geq 0\\
\dsp q^\gamma\sum_{j=0}^{|\gamma|-1} x_jq^j &\text{if } \gamma <0
\end{cases}.
$$
The ring of integers $\cal{O}$ is the closure $\Z_q$ of the subspace $\Z$ in the space $\Q_q$ and its unique maximal ideal $\frak{p}$ is given by $\frak{p}=q\Z_q$. The residue field is $\cal{O}/\frak{p}=\Z_q/q\Z_q=\Z/q\Z$. Finally, recall that each $x \in \Z_q$ can be uniquely written as $x=\sum_{j=0}^{\infty} x_jq^j$ where $x_j \in \{0,1,\ldots,q-1\}$.
\end{example}

\begin{example}[fields of formal Laurent series over finite fields] \normalfont
\label{Ex-Laurent-series}
Let $p$ be a prime number. If $n \geq 1$ is an integer, we denote by $\mathbb{F}_{p^n}$ the finite field of cardinality $p^n$. The field of formal Laurent series over $\mathbb{F}_{p^n}$ is defined as
$$
\mathbb{F}_{p^n}((X))
\ov{\mathrm{def}}{=} \bigg\{ \sum_{j \geq m} c_j X^j : m \in \Z, c_j \in \mathbb{F}_{p^n}\bigg\}.
$$ 
This is a non-Archimedean local field of characteristic $p$. Here, the product of two Laurent series $\sum_{j \geq n} b_j X^j$ and $\sum_{j \geq m} c_j X^j$ is given by the formal Laurent series $\sum_{j} \big(\sum_k b_kc_{j-k}\big) X^j$. For any non-zero element $x=\sum_{j \geq m} c_j X^j$ of the field $\mathbb{F}_{p^n}((X))$, where $m$ is the smallest integer such that $c_m \not= 0$, the absolute value of $x$ is given by $|x| = p^{-nm}$. 

The ring of integers $\cal{O}$ of $\mathbb{F}_{p^n}((X))$ is the ring $\mathbb{F}_{p^n}[[X]] \ov{\mathrm{def}}{=} \big\{ \sum_{j \geq 0} c_j X^j : c_j \in \mathbb{F}_{p^n}\big\}$ of formal power series over the field $\mathbb{F}_{p^n}$. Its unique maximal ideal  is given by $\frak{p}=X\mathbb{F}_{p^n}[[X]]$ and the residue field $\cal{O}/\frak{p}$ is isomorphic to $\mathbb{F}_{p^n}$.
\end{example}

It is known, see e.g.~\cite[Theorem 4.12 p.~140]{RaV99}, that non-Archimedean local fields are precisely the fields $\mathbb{F}_{p^n}((X))$ of formal Laurent series over the finite fields of cardinal $p^n$ from Example \ref{Ex-Laurent-series} and the finite extensions of the fields $\Q_q$ of $q$-adic numbers from Example \ref{Ex-q-adic}, where $q$ is a prime number.

\paragraph{Pontryagin duality} Let $\K$ be a non-Archimedean local field and $n \geq 1$ be an integer. Since $(\K^n,+)$ is a locally compact abelian group, we can apply the theory of Pontryagin duality for such groups. We will describe its dual explicitly using a non-constant character $\chi \co \K \to \mathbb C$ of the additive group of $\K$. For any $x,y \in \K^n$, we let $x\cdot y \ov{\mathrm{def}}{=} x_1y_1+\cdots +x_n y_n$. Using this notation, we can define for each $x \in \K^n$ the function $\chi_x \co \K^n \to \mathbb{C}$ by  
\begin{equation}
\label{def-chi-x}
\chi_x(y) 
\overset{\mathrm{def}}{=} \chi(x \cdot y) = \chi(x_1y_1 + \cdots + x_n y_n), \quad y \in \K^n.
\end{equation}  
The map $\chi_x$ is a character of the group $\K^n$ for any $x \in \K^n$. Then by \cite[Corollaire 1 p.~II-235]{Bou19} the map $\K^n \to \widehat{\K^n}$, $x \mapsto \chi_x$ is a topological isomorphism, identifying the additive group $\K^n$ with its Pontryagin dual $\widehat{\K^n}$.


\begin{example} \normalfont
Here, we use the notations of Example \ref{Ex-q-adic}. Consider the map $\chi_{q} \co \Q_q \to \mathbb{C}$ defined by 
\begin{equation}
\label{character}
\chi_{q}(x)
\ov{\mathrm{def}}{=} \e^{2\pi \i \{x\}_{q}}, \quad x \in \Q_q.
\end{equation}
By \cite[pp.~99-100]{Fol16} or \cite[Proposition 20 p.~II.237]{Bou19}, the map $\chi_{q}$ is a non-zero character of the additive group of $\Q_q$ with kernel $\Z_q$. In particular, we have the equality $\chi_{q}(x+y)=\chi_{q}(x)\chi_{q}(y)$ for any $x,y \in \Q_q$.
\end{example}

\paragraph{Fourier transform} 
In the sequel, we will always consider the Haar measure $\mu$ on the additive group of $\K^n$ normalized by the condition $\mu(\cal{O}^n)=1$. Let again $\chi \co \K \to \mathbb C$ be a fixed non-constant character. In this context, we have a notion of Fourier transform $\cal{F}$. It is a particular case of the classical Fourier transform on a locally compact abelian group thoroughly investigated in the books \cite{Bou19} and \cite{HeR79}.  The inverse Fourier transform of a complex function $f \in \L^1(\K^n)$ is defined by
\begin{equation}
\label{inverse-Fourier-transform}
\check{f}(x) 
\ov{\mathrm{def}}{=} \int_{\K^n} \chi_x(y) f(y) \d\mu(y), \quad x \in \K^n,
\end{equation}
where $\chi_x(y)$ is defined in \eqref{def-chi-x}. In the sequel, we will write $\d y$ instead of $\d\mu(y)$ for the sake of simplicity.

Following \cite[p.~117]{Tai75}, we denote by $\scr{D}(\K^n)$ the space of linear combinations of indicator functions of spheres with complex coefficients. We equip it with its canonical topology and we denote by $\scr{D}'(\K^n)$ its topological dual, which is called the space of distributions. For any distribution $T \in \scr{D}'(\K^n)$ its Fourier transform, denoted by $\cal{F}(T)$, is the distribution defined as
$$
\la \cal{F}(T) , f \ra 
= \la T, \cal{F}(f) \ra, \quad  f \in \scr{D}(\K^n).
$$




\vspace{0.2cm}

\paragraph{Improper integrals} 
Consider a function $f \in \L^1_\loc(\K^n)$. Generalizing slightly \cite[p.~49]{AKS10} to the case of a non-Archimedean local field, we say that the improper
integral $\int_{\K^n} f(x) \d x$ exists if
\begin{equation*}
\label{}
\lim_{N \to \infty} \sum_{k=-N}^{N} \int_{\norm{x}_{\mathbb K^n} = q^{k}} f(x) \d x
\end{equation*}
exists and we set 
\begin{equation}
\label{improper}
\int_{\K^n} f(x) \d x
\ov{\mathrm{def}}{=} \lim_{N \to \infty} \sum_{k=-N}^{N} \int_{\norm{x}_{\mathbb K^n} = q^{k}} f(x) \d x
=\sum_{k=-\infty}^{\infty} \int_{\norm{x}_{\mathbb K^n} = q^{k}} f(x) \d x.
\end{equation}

We proceed with additional background on operator theory.

\paragraph{$R$-boundedness} The concept of $R$-boundedness is a probabilistic notion of boundedness for sets of operators that has gained significant importance in recent years. It generalizes the notion of bounded subset of operators acting on a Hilbert space. It has been widely applied in various areas, including maximal regularity theory for evolution equations, stochastic evolution equations and vector-valued harmonic analysis. Suppose that $1 < p < \infty$. Following \cite[Definition 8.1.1, Remark 8.1.2 p.~165]{HvNVW18}, we say that a set $\cal{F}$ of bounded operators on a Banach space $X$ is $R$-bounded provided that there exists a constant $C\geq 0$ such that for any operators $T_1,\ldots, T_n$ in $\cal{F}$ and any vectors $x_1,\ldots,x_n$ in $X$, we have
\begin{equation}
\label{R-boundedness}
\Bgnorm{\sum_{k=1}^{n} \epsi_k \ot T_k (x_k)}_{\L^p(\Omega,X)}
\leq C \Bgnorm{\sum_{k=1}^{n} \epsi_k \ot x_k}_{\L^p(\Omega,X)},
\end{equation}
where $(\epsi_{k})_{k \geq 1}$ is a sequence of independent Rademacher variables on some probability space $\Omega$. It is known that this property is independent of $p$. The best constant in \eqref{R-boundedness} is denoted by $\mathscr{R}_p(\mathcal{F})$ and is referred to as the $R$-bound of the family $\cal{F}$. By \cite[Theorem 8.1.3 p.~166]{HvNVW18}, any $R$-bounded set is necessarily bounded. It is worth noting that by \cite[Corollary 8.6.2 p.~235]{HvNVW18} $X$ is isomorphic to a Hilbert space if and only if each bounded subset of $\B(X)$ is $R$-bounded.

If $X$ is a complex Banach lattice with finite cotype, we will use a classical theorem of Maurey which asserts that we have a uniform equivalence
\begin{equation}
\label{equiv-lattice}
\Bgnorm{\sum_{k=1}^n \epsi_k \ot x_k}_{\L^p(\Omega,X)}
\approx \Bgnorm{\bigg(\sum_{k=1}^n |x_k|^{2}\bigg)^{\frac{1}{2}}}_X
\end{equation}
for any vectors $x_1,\ldots,x_n$ of $X$, see e.g.~\cite[Theorem 16.18 p.~338]{DJT95} (or \cite[Definition 8.1.1 p.~165]{HvNVW18} for the real case).

We continue by providing some background on functional calculus.

\paragraph{$\H^\infty$ functional calculus}
For a comprehensive treatment of the subject, we refer the reader to the books \cite{HvNVW18}, \cite{HvNVW23}, \cite{Haa06}, as well as the survey papers \cite{Are04}, \cite{KuW04}, \cite{LeM99}, and the references therein. We begin by recalling the main definitions. We will use the notation $\Sigma_\theta \ov{\mathrm{def}}{=} \{z \in \mathbb{C}^* : |\arg(z)| < \theta\}$  to denote the open sector of angle $\theta >0$, see Figure \ref{figure-sector}. Let $A \co \dom A \subset X \to X$ be a closed densely defined linear operator acting on a Banach space $X$. We say that $A$ is a sectorial operator of type $\omega \in (0,\pi)$ if its spectrum $\sigma(A)$ is a subset of the closed sector $\ovl{\Sigma_\omega}$ and if for any angle $\nu \in (\omega,\pi)$, the set 
\begin{equation}
\label{2Sectorial}
\Big\{zR(z,A) : z \in \mathbb{C} - \overline{\Sigma_\nu}\Big\}
\end{equation}
is bounded in the space $\B(X)$ of bounded operators acting on $X$, where $R(z,A) \ov{\mathrm{def}}{=} (z\Id-A)^{-1}$ is the resolvent operator. The operator $A$ is said to be sectorial if it is a sectorial operator of type $\omega$ for some angle $\omega \in (0,\pi)$. In this case, we can introduce the angle of sectoriality
$$
\omega(A) 
\ov{\mathrm{def}}{=} \inf\{ \omega \in (0,\pi) : A \textrm{ is sectorial of type $\omega$} \}.
$$
\begin{figure}[ht]
\label{figure-sector}
\begin{center}
\includegraphics[scale=0.4]{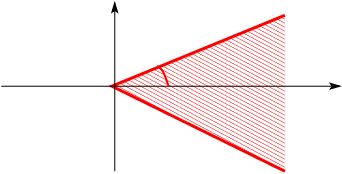}
\begin{picture}(0,0)

\put(-70,38){ $\theta$}

\put(-45,20){$\Sigma_\theta$}
\end{picture}

Figure 1: open sector $\Sigma_\theta$ of angle $\theta$
\end{center}

\end{figure}

\vspace{-0.4cm}

For any angle $\theta \in (0,\pi)$, we consider the algebra $\H^\infty_0(\Sigma_\theta)$ of all bounded holomorphic functions $f \co \Sigma_\theta \to \mathbb{C}$ for which there exist two positive real numbers $s,C > 0$ such that \eqref{ine-Hinfty0} holds. It is a (non-closed and non-dense) subalgebra of the Banach algebra 
\[ 
\H^\infty(\Sigma_\theta) 
\ov{\mathrm{def}}{=} \left\{ f \co \Sigma_\theta \to \mathbb{C} :\: f \text{ holomorphic },\: \norm{f}_{\H^\infty(\Sigma_\theta)} \ov{\mathrm{def}}{=} \sup_{z \in \Sigma_\theta} |f(z)| < \infty \right\}. 
\]
Let $A$ be a sectorial operator acting on a Banach space $X$. Consider some angle $\theta \in (\omega(A), \pi)$ and a function $f \in \H^\infty_0(\Sigma_\theta)$. Following \cite[p.~5]{LeM99}, for any angle $\nu \in (\omega(A),\theta)$ we introduce the operator
\begin{equation}
\label{2CauchySec}
f(A)
\ov{\mathrm{def}}{=} \frac{1}{2\pi \i}\int_{\partial\Sigma_\nu} f(z) R(z,A) \d z,
\end{equation}
acting on the space $X$, where the boundary $\partial\Sigma_\nu$ is parametrized by
\begin{equation}
\label{3contour}
\partial\Sigma_\nu(t)
\ov{\mathrm{def}}{=} \begin{cases}
-t \e^{\i\nu} &\text{if } t\in (-\infty,0]\\
t \e^{-\i\nu} &\text{if } t\in [0,\infty)\\
\end{cases}.
\end{equation}
In other words, the boundary of $\Sigma_\nu$ is oriented counterclockwise around the sector $\Sigma_\nu$. The sectoriality condition ensures that this integral is absolutely convergent and defines a bounded operator on $X$. Using Cauchy's theorem, it is possible to show that this definition does not depend on the choice of the chosen angle $\nu$. The resulting map $\H^\infty_0(\Sigma_\theta) \to \B(X)$, $f \mapsto f(A)$ is an algebra homomorphism.

Following \cite[Definition 2.6 p.~6]{LeM99}, we say that the operator $A$ admits a bounded $\H^\infty(\Sigma_\theta)$ functional calculus if the latter homomorphism is bounded, i.e.~if there exists a constant $C \geq 0$ such that 
\begin{equation}
\label{Def-functional-calculus}
\norm{f(A)}_{X \to X} 
\leq C\norm{f}_{\H^\infty(\Sigma_\theta)},\quad f \in \H^\infty_0(\Sigma_\theta).
\end{equation}
If the operator $A$ has dense range and admits a bounded $\H^\infty(\Sigma_\theta)$ functional calculus, then the previous homomorphism naturally extends to a bounded homomorphism $f \mapsto f(A)$ from the algebra $\H^\infty(\Sigma_\theta)$ into the algebra $\B(X)$. In this context, we can introduce the $\H^\infty$-angle
$$
\omega_{\H^\infty}(A) 
\ov{\mathrm{def}}{=} \inf\{\theta \in (\omega(A),\pi) : A \text{ admits a bounded $\H^\infty(\Sigma_\theta)$ functional calculus} \}.
$$

The following result is \cite[Theorem 10.7.12 p.~462]{HvNVW18}. It provides a broad class of examples of operators with such a functional calculus, though with an angle that may be slightly too large for applications. We refer to \cite[Chapter 4]{HvNVW16} for the definition of $\UMD$ Banach spaces.

\begin{thm}
\label{Th-fun-dilation}
Let $\Omega$ be a measure space and let $X$ be a $\UMD$ Banach space. Suppose that $1 < p < \infty$. Let $-A$ be the infinitesimal generator of a strongly continuous semigroup $(T_t)_{t \geq 0}$ of positive contractions on $\L^p(\Omega)$. Then (the closure of) the operator $A \ot \Id_X$ admits a bounded $\H^\infty(\Sigma_\theta)$ functional calculus on the Bochner space $\L^p(\Omega,X)$ for any angle $\theta >\frac{\pi}{2}$, i.e.~$\omega_{\H^\infty}(A) \leq \frac{\pi}{2}$.
\end{thm}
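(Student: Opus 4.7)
The plan is to combine Fendler's dilation theorem with the Hieber--Prüss theorem on the $\H^\infty$-calculus of generators of bounded $C_0$-groups on $\UMD$ spaces. By Fendler's theorem, the $C_0$-semigroup $(T_t)_{t \geq 0}$ of positive contractions on $\L^p(\Omega)$ dilates to a $C_0$-group of positive invertible isometries on a larger $\L^p$-space: there exist a measure space $(\tilde\Omega,\tilde\mu)$, a $C_0$-group $(U_t)_{t \in \R}$ of positive isometries on $\L^p(\tilde\Omega)$, a positive isometric embedding $J \colon \L^p(\Omega) \to \L^p(\tilde\Omega)$ and a positive contractive projection $Q \colon \L^p(\tilde\Omega) \to \L^p(\Omega)$ satisfying $T_t = Q U_t J$ for every $t \geq 0$. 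Writing $-\tilde A$ for the generator of $(U_t)_{t \in \R}$, the Laplace transform representation of the resolvent gives the intertwining identity $R(\lambda, A) = Q R(\lambda, \tilde A) J$ for every $\lambda$ with $\Re \lambda > 0$.

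Next, I would tensor with $\Id_X$. Since $J$, $Q$ and each $U_t$ are positive, they extend to operators of the same norm on the corresponding Bochner spaces, yielding a positive isometric embedding $J \otimes \Id_X$, a positive contractive projection $Q \otimes \Id_X$, and a $C_0$-group $(U_t \otimes \Id_X)_{t \in \R}$ of isometries on $\L^p(\tilde\Omega,X)$. As $X$ is $\UMD$, the Bochner space $\L^p(\tilde\Omega,X)$ is itself $\UMD$, and the (suitably closed) operator $\tilde A \otimes \Id_X$ generates the group $(U_t \otimes \Id_X)_{t \in \R}$ on this $\UMD$ space.

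I would then apply the Hieber--Prüss theorem, which says that the generator of a bounded $C_0$-group on a $\UMD$ space admits a bounded $\H^\infty(\Sigma_\theta)$ functional calculus for every angle $\theta > \pi/2$. Consequently, $\tilde A \otimes \Id_X$ admits such a calculus on $\L^p(\tilde\Omega,X)$. The underlying mechanism is the vector-valued Mihlin multiplier theorem (valid for $\UMD$-valued Bochner spaces over $\R$) combined with a Coifman--Weiss-type transference, representing holomorphic functions of $\tilde A \otimes \Id_X$ as Fourier multipliers acting through the group.

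Finally, the resolvent intertwining combined with the Cauchy contour definition \eqref{2CauchySec} yields, for every $\nu \in (\pi/2,\theta)$ and every $f \in \H^\infty_0(\Sigma_\theta)$,
\[
f(A \otimes \Id_X)
= \frac{1}{2\pi \i} \int_{\partial\Sigma_\nu} f(z)\, R(z, A \otimes \Id_X) \d z
= (Q \otimes \Id_X)\, f(\tilde A \otimes \Id_X)\, (J \otimes \Id_X),
\]
from which $\norm{f(A \otimes \Id_X)} \leq \norm{f(\tilde A \otimes \Id_X)} \leq C \norm{f}_{\H^\infty(\Sigma_\theta)}$ follows, giving the claim. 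The main technical obstacle is verifying that the positivity of $J$, $Q$ and the $U_t$ guarantees isometric/contractive extensions to Bochner spaces with $\UMD$-valued targets while preserving the semigroup and group structures, and that the resolvent intertwining passes cleanly through the contour integral to produce the claimed identity at the level of holomorphic functional calculus.
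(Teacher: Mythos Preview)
The paper does not prove this theorem: it is quoted verbatim as \cite[Theorem 10.7.12 p.~462]{HvNVW18} and used as a black box. Your sketch is correct and is in fact the standard route to that result in the cited reference---Fendler's dilation of a positive contractive $C_0$-semigroup on $\L^p$ to a $C_0$-group of positive isometries on a larger $\L^p$-space, tensoring with $\Id_X$ (positive contractions on $\L^p(\Omega)$ extend to contractions on $\L^p(\Omega,X)$ for any Banach space $X$), invoking the Hieber--Pr\"uss theorem for generators of bounded $C_0$-groups on $\UMD$ spaces, and transferring back via the resolvent intertwining $R(\lambda,A) = Q R(\lambda,\tilde A) J$ inside the Cauchy integral \eqref{2CauchySec}. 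The technical points you flag (contractive Bochner extensions of positive maps, preservation of the group property, strong continuity after tensoring) are all standard and settle without difficulty; so your argument is complete and matches the literature, even though the present paper offers nothing to compare it against.
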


Recall that a closed densely defined linear operator $A$ is called $R$-sectorial of type $\omega$ if it is sectorial of type $\omega$ and if for any angle $\nu \in (\omega,\pi)$ the set appearing in \eqref{2Sectorial} is $R$-bounded. The operator $A$ is said to be $R$-sectorial if it is an $R$-sectorial operator of type $\omega$ for some angle $\omega \in (0,\pi)$. In this case, we can introduce the angle of $R$-sectoriality 
\begin{equation}
\label{angle-R-sectoriality}
\omega_R(A) 
\ov{\mathrm{def}}{=} \inf\{ \omega \in (\omega(A),\pi): A \textrm{ is $R$-sectorial of type $\omega$}\}.
\end{equation}
The next result is a particular case of \cite[Proposition 10.3.3 p.~400]{HvNVW18} and clarifies between the connection $R$-sectoriality and $R$-analyticity. Recall that a strongly continuous semigroup $(T_t)_{t \geq 0}$ of bounded operators on a Banach space $X$ is called bounded analytic (resp.~$R$-bounded analytic) if there exist an angle $\omega \in (0,\frac{\pi}{2})$ and a bounded holomorphic extension (resp.~an $R$-bounded analytic extension) $\Sigma_\omega \to \B(X)$, $z \mapsto T_z$. In the $R$-analytic case, we define the angle of $R$-analyticity by
$$
\omega_{R-\mathrm{ana}}((T_t)_{t \geq 0})
\ov{\mathrm{def}}{=} \sup \big\{\omega \in (0,\tfrac{\pi}{2}) :\text{there exists an analytic $R$-bounded extension }(T_z)_{z \in \Sigma_\omega} \big\}.
$$

\begin{thm}
\label{thm-R-anal-prime}
Let $(T_t)_{t \geq 0}$ be a strongly continuous bounded analytic semigroup on a Banach space $X$ with infinitesimal generator $-A$. If the semigroup is $R$-analytic then the operator $A$ is $R$-sectorial and
$$
\omega_R(A)
=\tfrac{\pi}{2}-\omega_{R-\mathrm{ana}}((T_t)_{t \geq 0}).
$$
\end{thm}

We will use the following result from \cite[Corollary 10.4.10 p.~422]{HvNVW18}, \cite[Theorem 5.3 p.~335]{KaW01}. Note  that every $\UMD$ Banach space has the triangular contraction property $(\Delta)$ by \cite[Corollary 7.5.9 p.~137]{HvNVW18}.  

\begin{thm}
\label{thm-R-anal}
Let $X$ be a Banach space with property $(\Delta)$. Let $A$ be a sectorial operator acting on $X$ admitting a bounded $\H^\infty(\Sigma_\theta)$ functional calculus for some angle $\theta \in (\omega(A), \pi)$. Then $A$ is $R$-sectorial and we have 
$$
\omega_R(A) 
= \omega_{\H^\infty}(A).
$$
\end{thm}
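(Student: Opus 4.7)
The plan is to prove the two inequalities $\omega_R(A) \leq \omega_{\H^\infty}(A)$ and $\omega_{\H^\infty}(A) \leq \omega_R(A)$ separately. The first is the easier direction; the second is the Kalton--Weis ``angle reduction'' theorem, which I expect to be the main obstacle.

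For $\omega_R(A) \leq \omega_{\H^\infty}(A)$, fix any $\theta \in (\omega_{\H^\infty}(A), \pi)$ and $\nu \in (\theta,\pi)$. For each $\lambda \notin \ovl{\Sigma_\nu}$, the function $r_\lambda(z) \ov{\mathrm{def}}{=} \lambda/(\lambda-z)$ lies in $\H^\infty(\Sigma_\theta)$ with $\norm{r_\lambda}_{\H^\infty(\Sigma_\theta)}$ bounded uniformly in $\lambda$, and $r_\lambda(A) = \lambda R(\lambda,A)$. The first task is therefore to upgrade the (scalar) bounded $\H^\infty(\Sigma_\theta)$ calculus into an $R$-bounded one over any bounded subset of $\H^\infty(\Sigma_\theta)$. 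I would do this by a randomization argument using property $(\Delta)$: given functions $f_1,\ldots,f_n \in \H^\infty(\Sigma_\theta)$ with $\norm{f_k}_{\H^\infty(\Sigma_\theta)} \leq 1$ and vectors $x_1,\ldots,x_n \in X$, the sum $\sum_k \epsi_k \ot f_k(A)x_k$ is estimated by applying the $\H^\infty$-calculus of $A \ot \Id$ on the Bochner/Rademacher space to the single function $\sum_k \epsi_k f_k \in \H^\infty(\Sigma_\theta; \mathrm{Rad}_n)$, whose sup-norm is controlled via Khintchine/property $(\Delta)$ by $\bnorm{\sum_k \epsi_k \ot x_k}$. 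Applying this to $\{r_\lambda\}$ yields $R$-boundedness of $\{\lambda R(\lambda,A) : \lambda \notin \ovl{\Sigma_\nu}\}$, so $\omega_R(A) \leq \theta$, and letting $\theta \downarrow \omega_{\H^\infty}(A)$ closes this direction.

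For $\omega_{\H^\infty}(A) \leq \omega_R(A)$, fix $\omega \in (\omega_R(A), \omega_{\H^\infty}(A))$ (assuming strict inequality for contradiction) and a sufficiently large $\theta$ where bounded calculus already holds. Given $f \in \H^\infty_0(\Sigma_\omega)$, I would represent $f(A)$ by the Cauchy-type integral \eqref{2CauchySec} along $\partial \Sigma_\nu$ for some $\nu \in (\omega_R(A),\omega)$ and then decompose $f$ using a McIntosh-style dyadic resolution of the identity $\psi_j(z) = \psi(2^{-j}z)$, writing
\begin{equation*}
f(A) = \sum_{j \in \Z} (f\psi_j)(A).
\end{equation*}
Each block $(f\psi_j)(A)$ is a contour integral of resolvents $R(z,A)$ weighted by $f(z)\psi_j(z)$, and the $R$-sectoriality of $A$ on $\Sigma_\nu$ allows me to pull the resolvents out as an $R$-bounded family and absorb the scalar weights into a square-function estimate. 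The bound on $\bnorm{\sum_j \epsi_j \ot (f\psi_j)(A)x}$ is controlled, via property $(\Delta)$ and the already established bounded $\H^\infty(\Sigma_\theta)$ calculus (which provides the McIntosh square-function equivalence $\norm{x} \approx \bnorm{(\sum_j|\psi_j(A)x|^2)^{1/2}}$ under Rademacher randomization), by $\norm{f}_{\H^\infty(\Sigma_\omega)}\norm{x}$. This contradicts $\omega > \omega_R(A)$ being below $\omega_{\H^\infty}(A)$.

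The hard part will be the second direction: making the contour integral rigorous as an $R$-bounded transform, and invoking the triangular contraction property $(\Delta)$ at the right moment to replace the double randomization (over dyadic scales and over the contour parametrization) with a single Rademacher average. Once that machinery is in place, combined with the McIntosh decomposition, the quantitative bound $\omega_{\H^\infty}(A) \leq \omega_R(A)$ falls out as in \cite[Theorem 5.3]{KaW01}.
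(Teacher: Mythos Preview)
The paper does not prove this theorem at all: it is stated as a background result and attributed to \cite[Corollary 10.4.10 p.~422]{HvNVW18} and \cite[Theorem 5.3 p.~335]{KaW01}, with no argument given. So there is no ``paper's own proof'' to compare against.

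Your outline is a reasonable sketch of the Kalton--Weis argument that those references contain. The first direction is correct in spirit: on a space with property $(\Delta)$, a bounded $\H^\infty(\Sigma_\theta)$ calculus is automatically $R$-bounded on norm-bounded subsets of $\H^\infty(\Sigma_\theta)$ (this is the content of \cite[Theorem 10.3.4]{HvNVW18}), and applying this to the resolvent functions $r_\lambda$ gives $\omega_R(A) \leq \omega_{\H^\infty}(A)$. Your phrasing of this via an $\H^\infty$ calculus with values in $\mathrm{Rad}_n$ is somewhat nonstandard and would need justification; the usual route is a randomized Cauchy integral estimate. For the second direction your description is broadly right but compresses a substantial amount of work: the actual proof in \cite{KaW01} does not proceed by contradiction but directly bounds $\norm{f(A)}$ for $f \in \H^\infty_0(\Sigma_\omega)$ with $\omega > \omega_R(A)$ via an $R$-bounded operator-valued multiplier theorem (their ``unconditionality lemma''), and the role of property $(\Delta)$ is precisely to convert the double randomization into a single one. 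Your sketch captures the architecture, though ``pull the resolvents out as an $R$-bounded family and absorb the scalar weights into a square-function estimate'' hides the main technical step.
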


\paragraph{H\"ormander functional calculus} If $s >0$, using  the usual Sobolev norm 
\begin{equation*}
\norm{f}_{\W^{s,2}(\R)} 
\ov{\mathrm{def}}{=} \norm{(1+\xi^2)^{\frac{s}{2}}\hat{f}(\xi)}_{\L^2(\R)}
\end{equation*}
and the inhomogeneous Sobolev space $\W^{s,2}(\R) \ov{\mathrm{def}}{=} \{f \in \cal{S}'(\R) : \norm{f}_{\W^{s,2}(\R) } < \infty \}$, we introduce the norm
\begin{equation}
\label{hor-2}
\norm{f}_{\Hor^s_2(\R_+^*)}
\ov{\mathrm{def}}{=} \sup_{t > 0} \norm{x \mapsto \eta(x) \cdot f(tx) }_{\W^{s,2}(\R)} 
< \infty,
\end{equation} 
where $\eta \co ]0,\infty[ \to \R$ is any non-zero function of class $\mathrm{C}^\infty$ with compact support. The following is a direct consequence of \cite[Theorem 7.1 (1)]{KrW18}, observing that a complex Banach lattice with finite cotype has Pisier's property $(\alpha)$ by \cite[Theorem 7.5.20 p.~145]{HvNVW18}. This is a slight variant of \cite[Proposition 10.7]{AHKP25}. 

\begin{thm}
\label{prop-KrW18}
Suppose $s > 0$. Let $-A$ be the generator of a bounded analytic semigroup $(T_z)_{\Re z > 0}$ acting on a complex Banach lattice $X$ with finite cotype. Suppose that $A$ has dense range (and hence is injective), and has a bounded $\H^\infty(\Sigma_\omega)$ functional calculus for some angle $\omega \in (0,\pi)$. Assume in addition that the family
\begin{equation}
\label{family-23}
\left\{ (\cos \arg z)^\beta T_z :  \Re z > 0 \right\} 
\end{equation}
of operators is $R$-bounded over the space $X$. Then the operator $A$ admits a bounded $\Hor^s_2(\R_+^*)$ H\"ormander calculus of any order $s > \beta + \frac12$, i.e.
\begin{equation}
\label{divers-8765}
\norm{f(A)}_{X \to X} 
\lesssim \norm{f}_{\Hor^s_2(\R_+^*)}, \quad f \in \Hor^s_2(\R_+^*).
\end{equation}
\end{thm}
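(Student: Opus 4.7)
The plan is to obtain this statement as an essentially direct translation of the abstract multiplier theorem \cite[Theorem 7.1 (1)]{KrW18}, whose hypotheses match ours after one standard structural identification on the Banach space side. I would organize the proof in three short steps.

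First, I would verify that $X$ satisfies the geometric assumption required by \cite[Theorem 7.1 (1)]{KrW18}, namely Pisier's property $(\alpha)$. This is where the hypothesis that $X$ is a complex Banach lattice with finite cotype enters: by \cite[Theorem 7.5.20 p.~145]{HvNVW18}, any such lattice automatically has property $(\alpha)$. Reflexivity is needed to guarantee that the $\H^\infty$ calculus extends to the full algebra $\H^\infty(\Sigma_\omega)$ (via the decomposition $X = \overline{\Ran A} \oplus \ker A$) and to ensure the convergence lemmas used in the abstract multiplier theorem apply unchanged.

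Second, I would identify the $R$-boundedness datum. The hypothesis that the family $\{(\cos\arg z)^\beta T_z : \Re z > 0\}$ is $R$-bounded is exactly the input the abstract theorem requires in order to feed the exponent $\beta$ into the order threshold: it quantifies how badly the semigroup may blow up as $z$ approaches the imaginary axis, and the $\Hor^s_2$ order is then traded against this growth by a classical subordination/contour argument passing from the heat-type kernel to a Mellin representation of $f(A)$. Combined with the bounded $\H^\infty(\Sigma_\omega)$ calculus, this is exactly the scenario of \cite[Theorem 7.1 (1)]{KrW18}.

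Third, I would invoke \cite[Theorem 7.1 (1)]{KrW18} to conclude \eqref{divers-8765} for every $s > \beta + \tfrac12$. The main potential obstacle, and the only non-mechanical point, is dictionary-matching: the reference \cite{KrW18} is phrased in terms of a parameter describing the $R$-growth of $(T_z)$ near the boundary of the right half-plane, and one must check that our parameter $\beta$ indeed plays that role with the correct normalization so that the gain of $\tfrac12$ in the Sobolev order is preserved. Since this is precisely the identification made in \cite[Proposition 10.7]{AHKP25}, of which our statement is a verbatim variant, no new estimate is required and the proof reduces to quoting the abstract result once the property $(\alpha)$ check above is in place.
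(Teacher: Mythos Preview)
Your proposal is correct and follows essentially the same approach as the paper: the paper presents this theorem as a direct consequence of \cite[Theorem 7.1 (1)]{KrW18}, noting (i) that a complex Banach lattice with finite cotype has Pisier's property $(\alpha)$ by \cite[Theorem 7.5.20 p.~145]{HvNVW18}, and (ii) that reflexivity allows one to reduce to the injective/dense-range case via the splitting $X=\ker A \oplus \ovl{\Ran A}$, with $\ovl{\Ran A}$ inheriting finite cotype and property $(\alpha)$. You have identified all of these ingredients, including the reference to \cite[Proposition 10.7]{AHKP25}.
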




It is worth noting that for any angle $\theta > 0$ the space $\H^\infty(\Sigma_\theta)$ injects continuously in $\Hor^s_2(\R_+^*)$ by \cite[Proposition 9.1 b) p.~1398]{HaP23}.

\paragraph{Conditional expectations} 
Suppose that $(\Omega,\cal{A},\mu)$ is a measure space and that $(\Omega_{j})_{j \in J}$ is an at most countable and measurable partition of $\Omega$, i.e.~a partition of $\Omega$ such that $\Omega_j \in \cal{A}$ for each $j \in J$. We suppose that each subset $\Omega_j$ is of finite measure $\mu(\Omega_j) > 0$. Then a slight generalization \cite[p.~89]{Ste70} of \cite[Exercise 4 p.~227]{AbA02} says that for any $f \in \L^1(\Omega,\cal{A},\mu) \cap \L^\infty(\Omega,\cal{A},\mu)$ the conditional expectation of $f$ with respect to $\cal{A}$ is given by
\begin{equation}
\label{cond-exp}
\E(f|\cal{A})
=\sum_{j \in J} \bigg[ \frac{1}{\mu(\Omega_j)}\int_{\Omega_j} f \d \mu\bigg] 1_{\Omega_j},
\end{equation}
where $1_{\Omega_j}$ is the indicator function of the subset $\Omega_j$.

\section{Locally compact Spector-Vilenkin groups and radial functions}
\label{Section-Vilenkin}

We begin by recalling some general topological concepts. Following \cite[p.~360]{Eng89}, we say that a non-empty topological space $Y$ is hereditarily disconnected if its connected components are reduced to singletons. Unfortunately in the literature, for instance \cite[p.~11]{HeR79}, hereditarily disconnected spaces are sometimes called totally disconnected spaces. Following \cite[p.~360]{Eng89}, a topological space $Y$ is called zero-dimensional if $Y$ is a non-empty $T_1$-space and has a basis consisting of open-and-closed subsets. Finally, a space is said to be totally disconnected \cite[p.~369]{Eng89} if the quasi-component of any point $x \in Y$ consists of the point $x$ alone. By \cite[p.~369]{Eng89}, every zero-dimensional space is totally disconnected and every totally disconnected space is hereditarily disconnected. Conversely, by \cite[Theorem 6.2.9 p.~362]{Eng89}, every non-empty hereditarily disconnected locally compact space is zero-dimensional. So these notions are equivalent for the class of locally compact spaces and we will use the terminology <<totally disconnected>>.

We will use the following elementary result \cite[Corollary 3.1.5 p.~124]{Eng89}.

\begin{prop}
\label{prop-Engelking}
Let $U$ be an open subset of a compact topological space $X$. If a family $(F_k)_{k \in I}$ of
closed subsets of $X$ satisfies $\cap_{k \in I} F_k \subset U$, then there exists a finite subset $\{k_1,\ldots,k_l\}$ of $I$ such that $F_1 \cap \cdots \cap F_{k_l} \subset U$.
\end{prop}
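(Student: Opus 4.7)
The plan is to use the standard finite-subcover argument by passing to complements. First, I would set $V_k \ov{\mathrm{def}}{=} X \setminus F_k$ for each $k \in I$, so that each $V_k$ is open in $X$. Then I would rewrite the hypothesis $\bigcap_{k \in I} F_k \subset U$ by taking complements: this is equivalent to
\begin{equation*}
X \setminus U \subset X \setminus \bigcap_{k \in I} F_k = \bigcup_{k \in I} V_k.
\end{equation*}

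Next, I would observe that $X \setminus U$ is closed in $X$, and hence compact as a closed subset of the compact space $X$. The collection $(V_k)_{k \in I}$ is therefore an open cover of the compact set $X \setminus U$, so by compactness there exist finitely many indices $k_1,\ldots,k_l \in I$ such that
\begin{equation*}
X \setminus U \subset V_{k_1} \cup \cdots \cup V_{k_l}.
\end{equation*}

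Finally, I would conclude by taking complements once more:
\begin{equation*}
F_{k_1} \cap \cdots \cap F_{k_l} = X \setminus (V_{k_1} \cup \cdots \cup V_{k_l}) \subset X \setminus (X \setminus U) = U,
\end{equation*}
which gives the desired finite subfamily. There is no real obstacle here; the only thing to verify carefully is the equivalence between the intersection condition and the open-cover condition via De Morgan, after which the compactness of $X$ (inherited by the closed subset $X \setminus U$) does all the work.
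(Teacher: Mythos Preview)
Your argument is correct and is the standard proof of this fact. The paper does not actually prove this proposition; it merely cites it as \cite[Corollary 3.1.5 p.~124]{Eng89}, so there is nothing to compare against beyond noting that your complement/finite-subcover argument is exactly the classical one.
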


Now, we define the class of groups on which we will focus in this paper inspired by the papers \cite[p.~3]{Vil63} and \cite[p.~61]{Spe70}. 
In the following definition, we use as order on $I$ the natural one inherited from $\R$ and we use the notation $\N \ov{\mathrm{def}}{=} \{ 0, 1, 2, 3, \ldots\}$.
\begin{defi}
\label{Def-Spector-Vilenkin}
A Hausdorff topological group $G$ is said to be a locally compact Spector-Vilenkin group if there exists a strictly decreasing sequence $(G_k)_{k \in I}$ with $I=\Z$, $I=\N$ or $I=-\N$ of compact open subgroups of $G$ such that $\cup_{k \in I} G_k = G$ and satisfying \begin{equation}
\label{}
\bigcap_{k \in I} G_k 
= \{e\}.
\end{equation}
\end{defi}

Note the following result. 

\begin{lemma} 
\label{lemma-base-vois}
Let $G$ be a Hausdorff topological group admitting a strictly decreasing sequence $(G_k)_{k \in I}$ with $I = \Z$, $I = \N$ or $I = -\N$ of compact open subgroups such that $\bigcap_{k \in I} G_k = \{ e \}$. Then the family $(G_k)_{k \in I}$ forms a basis of neighborhoods at $e$.
\end{lemma}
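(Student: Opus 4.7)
\emph{Proof plan.} The plan is to verify that for any open neighbourhood $U$ of $e$ in $G$ there exists an index $k \in I$ with $G_k \subset U$; since each $G_k$ already is an open neighbourhood of $e$ (being open and containing the identity), this will give the claim. In the degenerate case $I = -\N$, strict decrease forces $G_0 = \bigcap_{k \in I} G_k = \{e\}$ and the conclusion is immediate, so I would focus on the substantive cases $I = \Z$ and $I = \N$.

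First I would fix an arbitrary $k_0 \in I$ and transfer the problem to the compact subgroup $G_{k_0}$. The key input is that in a topological group, every open subgroup is automatically closed, since its complement is a union of open cosets. Consequently each $G_k$ is closed in $G$, hence closed in $G_{k_0}$, so $(G_k)_{k \geq k_0}$ is a family of closed subsets of the compact space $G_{k_0}$. By strict decrease, the indices $k < k_0$ contribute only supersets of $G_{k_0}$ to the intersection, so
\[
\bigcap_{k \geq k_0} G_k = \bigcap_{k \in I} G_k = \{e\} \subset U \cap G_{k_0}.
\]

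Next I would apply Proposition \ref{prop-Engelking} with $X = G_{k_0}$, the open subset $U \cap G_{k_0}$ of $G_{k_0}$, and the family $(G_k)_{k \geq k_0}$. This produces finitely many indices $k_1, \ldots, k_l \geq k_0$ such that $G_{k_1} \cap \cdots \cap G_{k_l} \subset U \cap G_{k_0} \subset U$. Finally, strict decrease collapses this finite intersection to a single subgroup: if $k_m = \max\{k_1, \ldots, k_l\}$, then $G_{k_1} \cap \cdots \cap G_{k_l} = G_{k_m}$, giving $G_{k_m} \subset U$ as required.

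There is no serious obstacle; the argument is essentially a compactness extraction. The only step that might be overlooked is the automatic closedness of open subgroups, which is precisely what allows Proposition \ref{prop-Engelking} to be invoked inside the compact subgroup $G_{k_0}$.
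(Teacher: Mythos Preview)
Your proof is correct and follows essentially the same approach as the paper: handle $I=-\N$ separately (where $G_0=\{e\}$), then in the cases $I=\N$ or $I=\Z$ work inside a fixed compact subgroup and invoke Proposition~\ref{prop-Engelking} to extract a finite subfamily whose intersection is a single $G_{k_m}$. The only cosmetic differences are that the paper works in $G_0$ rather than an arbitrary $G_{k_0}$, and that closedness of the $G_k$ follows already from compactness in a Hausdorff space, so your detour through ``open subgroups are closed'' is valid but not needed.
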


\begin{proof}
Each $G_k$ contains the identity element $e$ and is open, so each $G_k$ is a neighborhood of $e$. It remains to show that any neighborhood at $e$ contains one of the $G_k$. Let $U$ be such a neighborhood. Clearly, we may assume that $U$ is open. Moreover, by replacing $U$ with the smaller neighborhood $U \cap G_0$ if necessary, we can also suppose that $U \subset G_0$.

If $I = -\N$, then there is nothing to prove since $G_0 = \{ e \}$ and consequently $U = G_0$.

If $I = \N$ or $I = \Z$ we have $\bigcap_{k \in I} G_k = \{ e \} \subset U$. As $G_0$ is compact, we can use Proposition \ref{prop-Engelking} to deduce that there exists a finite subset $\{k_1,\ldots,k_l\}$ of $I$ such that $G_{k_1} \cap \cdots \cap G_{k_l} \subset U$. This means that $G_{\max\{k_1,\ldots,k_l\}} \subset U$.
\end{proof}

Let $G$ be a locally compact Spector-Vilenkin group with associated sequence $(G_k)_{k \in I}$. By Lemma \ref{lemma-base-vois}, the family $(G_k)_{k \in I}$ forms a basis of neighborhoods at $e$. Consequently, the topology on $G$ is the unique <<subgroup topology>> defined by the family $(G_k)_{k \in I}$ as defined in \cite[Example p.~223]{Bou98} or \cite{Hal50}. 

Suppose that $I=\Z$ or $I=\N$. Since $G_{k+1}$ is open in $G_k$ for each $k \in I$, the homogeneous space $G_k/G_{k+1}$ is discrete by \cite[Proposition 14 p.~231]{Bou98}. 
Furthermore $G_k$ is compact, so the homogeneous space $G_k/G_{k+1}$ is also compact by \cite[Proposition 11 p.~257]{Bou98}. 
Consequently, each $G_k/G_{k+1}$ is discrete and compact, hence finite. For any integer $k \in I$, let $d_k$ be the cardinal of the space $G_k/G_{k+1}$. Then $d_k \geq 2$. If we also have 
$$
\sup_{k \in I} |G_k/G_{k+1}|< \infty,
$$
we say that $G$ is a bounded-order locally compact Spector-Vilenkin group. If $|G_k/G_{k+1}|$ is constant, we say that $G$ is a constant-order locally compact Spector-Vilenkin group. If $I=-\N$, we leave it to the reader to adapt the previous discussion by replacing $G_{k+1}$ by $G_{k-1}$.

It is well-known that a locally compact group $G$ is totally disconnected if and only if the compact open subgroups form a basis of neighbourhoods of the identity element $e$, see e.g.~\cite[Theorem 7.7 p.~62]{HeR79} and \cite[Theorem 1.34 p.~22]{HoM13}. 
In other words, this means that $G$ has arbitrarily small compact open subgroups. If $G$ is compact, we can suppose that the subgroups are normal, again by the cited results. Consequently, a locally compact Spector-Vilenkin group is totally disconnected. Moreover, such a group is infinite since the sequence of subgroups is strictly decreasing and metrizable by the Birkhoff-Kakutani theorem \cite[p.~437]{Eng89}, which says that it is equivalent to saying that the group $G$ is first countable. 

Finally, if each subgroup $G_k$ is \textit{normal}, the locally compact Spector-Vilenkin group $G$ is pro-discrete by \cite[Proposition 6.36 p.~119]{ArK23}, and unimodular by \cite[Proposition 3 p.~20]{Bou04}, which says that the existence of a compact neighborhood of $e$ invariant by the inner automorphisms implies the unimodularity of the group.




\begin{example} \normalfont
If $I=\mathbb{N}$, we have a countable chain of strictly decreasing inclusions 
$$
\{e\} \subset \cdots \subset G_k \subset \cdots \subset G_1 \subset G_0= G.
$$ 
Since $G=G_0$, such a group is compact and we drop the word <<locally>>. By \cite[Theorem 1.34 p.~22]{HoM13} and \cite[Theorem 7.7 p.~62]{HeR79}, we can suppose that the groups $G_k$ are normal. So, we recover the notion of <<compact noncommutative Vilenkin group>> of \cite[Definition 2.1]{DeV22}. The assumption <<profinite>> is useless in \cite[Definition 2.1]{DeV22} since a Hausdorff topological group is compact and totally disconnected if and only if it is profinite by \cite[Corollary 1.2.4 p.~19]{Wil98}.
\end{example}

\begin{example} \normalfont
The subclass of locally compact \textit{abelian} Spector-Vilenkin groups with $I=\N$ is the class of groups considered in the paper \cite{Vil63}. These groups are precisely the infinite metrizable totally disconnected compact abelian groups (also stated in \cite[p.~61]{Spe70}). Indeed, Vilenkin assumes in \cite[p.~3]{Vil63} the second axiom of countability instead of our weaker assumption of metrizability, but it is equivalent for compact groups by \cite[Theorem 2.A.10 p.~15]{DH16}. Note that the dual $\hat{G}$ of such a group $G$ is discrete by \cite[Theorem 23.17 p.~362]{HeR79}. Moreover, $\hat{G}$ is a torsion group (i.e.~periodic) by \cite[Corollary 24.20 p.~383]{HeR79} and is countable by \cite[Theorem 24.15 p.~382]{HeR79}.
\end{example}

\begin{example} \normalfont
\label{Ex-3}
For any integer $m \geq 2$, we denote by $\Z/m\Z$ the cyclic group of order $m$. Let $p=(p_0,p_1,\ldots)$ be a sequence of natural numbers $p_k$ with $p_k \geq 2$. By \cite[Corollary 1.2.4 p.~19]{Wil98}, the group $G=\prod_{k=0}^{\infty} \Z/p_k\Z$ equipped with the product topology is a totally disconnected compact abelian group which is of course infinite and metrizable since the countable product of metric spaces is metrizable by \cite[Theorem 4.2.2 p.~259]{Eng89}. So it is an example of a compact \textit{abelian} Spector-Vilenkin group with $I=\N$. It is easy to define a suitable sequence $(G_k)_{k \geq 0}$.
\end{example}

\begin{example} \normalfont
\label{exa-Zq-Spector-Vilenkin}
Let $q$ be a prime number. Then the group $\Z_q=\{x \in \Q_q : |x|_q \leq 1 \}$ defined in Example \ref{Ex-q-adic} is another compact abelian Spector-Vilenkin group under the topology induced by the one of $\Q_q$. Indeed, according to \cite[Proposition 13.1.8 p.~253]{DeE14}, for each integer $k \geq 0$
$$
G_k \ov{\mathrm{def}}{=} q^k \Z_q
=\big\{x \in \Q_q : |x|_q \leq \tfrac{1}{q^k} \big\}
$$ 
is a compact open subgroup of the additive group $\Q_q$ and the sequence $(G_k)_{k \geq 0}$ is a basis of neighborhoods at 0 in the additive group $\Q_q$ and consequently also in the open subset $\Z_q$. Of course, $\bigcup_{k \geq 0} G_k = G_0 = \Z_q$ and $\bigcap_{k \geq 0} G_k=\{0\}$. Moreover, the sequence  $(G_k)_{k \geq 0}$ is clearly decreasing. 
It is even strictly decreasing. Indeed, since $|q^k|_q \ov{\eqref{value-q}}{=} \frac{1}{q^k}$ the element $q^k$ belongs to $G_k$ and does not belong to $G_{k+1}$.
\end{example}

\begin{example} \normalfont
\label{Exa-Vilenkin-group}
If $I=\Z$, we essentially recover in Definition \ref{Def-Spector-Vilenkin} the notion of <<locally compact Vilenkin group>> as introduced in \cite[Definition 1.1]{DeV22}. We have a countable chain of strictly decreasing inclusions 
$$
\{e\} \subset \cdots \subset G_{k+1} \subset G_{k} \subset \cdots \subset G.
$$ 
Indeed, the authors of \cite{DeV22} impose the additional assumptions <<unimodular>> and <<type I>>. 
\end{example}

\begin{example} \normalfont
In the classical book \cite[p.~59]{EdG77}, the locally compact \textit{abelian} Spector-Vilenkin groups with $I=\Z$ are called <<groups having a suitable family of compact open subgroups>>.
We also refer to the book \cite{AVDR81}, which appears to contain interesting information, but is unfortunately not available in English.
\end{example}

\begin{example} \normalfont
\label{Exa-Vilenkin-SBSW}
Our \textit{bounded order} locally compact \textit{abelian} Spector-Vilenkin groups with $I=\Z$ coincide with the <<locally compact Vilenkin groups>> of \cite[Chap.~7, Definition 7.1 p.~248]{SBSW15a}.
\end{example}

\begin{example} \normalfont
\label{Example-Kn}
Consider some non-Archimedean local field $\K$. Recall that the quotient $\cal{O}/\frak{p}$ is isomorphic to a finite field with cardinal $q$. Then for any integer $n \geq 1$ the additive group $(\K^n,+)$ is a locally compact Spector-Vilenkin group with $I=\Z$ in the sense of Example \ref{Exa-Vilenkin-SBSW}. Indeed, we can consider the family $(G_k)_{k \in \Z}$ of subsets defined by
\begin{equation}
\label{Gn-q-adic}
G_k
\ov{\mathrm{def}}{=} \big\{x \in \mathbb{K}^n : \norm{x}_{\mathbb{K}^n} \leq \tfrac{1}{q^k} \big\},
\end{equation}
where $\norm{\cdot}_{\mathbb{K}^n}$ is defined in \eqref{norm-max}. By essentially \cite[p.~8]{Tai75}, each $G_k$ is a compact open subgroup of $(\K^n,+)$. In particular, if $q$ is a prime number then the additive group $(\Q_q,+)$ of $q$-adic numbers is a locally compact Spector-Vilenkin group. 
\end{example} 

\begin{remark} \normalfont
In \cite[p.~1]{DHW24}, it is stated that any totally disconnected compact group is isomorphic to the complete direct product of some finite groups. This does not appear to us to be correct\footnote{\thefootnote. G.~Hong has been notified by email.} since a topological group is compact and totally disconnected if and only if it is profinite by \cite[Corollary 1.2.4 p.~19]{Wil98}.

\end{remark}


\begin{remark} \normalfont
It is stated without proof in \cite[p.~61]{Spe70}, that the locally compact \textit{abelian} Spector-Vilenkin groups coincide with the $\sigma$-compact infinite totally disconnected metrizable locally compact abelian groups  having a compact open subgroup such that the quotient is torsion.
\end{remark}

\begin{example} \normalfont
\label{exa-Vilenkin-moins-N}
If $I=-\N$, we have a countable chain of strictly decreasing inclusions 
$$
G \supset \ldots G_{-k} \supset \cdots \supset G_{-2} \supset G_{-1} \supset G_0= \{ e \}.
$$ 
The following result is written in \cite[p.~61]{Spe70} without proof. We add a proof here for the convenience of the reader. Recall that a group $G$ is said to be a torsion group if every element of $G$ has finite order.

\begin{prop}
The subclass of locally compact \textit{abelian} Spector-Vilenkin groups with $I=-\N$ is the class of torsion countable abelian discrete groups. 
\end{prop}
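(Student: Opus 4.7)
The plan is to prove both inclusions, exploiting the fact that the indexing $I = -\N$ forces the chain to terminate at $G_0$.

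For the direct implication $(\Rightarrow)$, suppose $G$ is a locally compact abelian Spector-Vilenkin group with $I=-\N$. Since the sequence $(G_k)_{k \in -\N}$ is strictly decreasing with respect to the natural order on $-\N = \{0,-1,-2,\ldots\}$, the term $G_0$ is the smallest element of the chain, and the condition $\bigcap_{k \in -\N} G_k = \{e\}$ therefore forces $G_0 = \{e\}$. In particular, $\{e\}$ is an open subgroup of $G$, so the topology on $G$ is discrete. Each $G_k$ is then compact in a discrete space, hence finite, so $G = \bigcup_{k \in -\N} G_k$ is a countable union of finite sets and is therefore countable. Finally, any $g \in G$ lies in some finite $G_k$, so has finite order, which shows that $G$ is a torsion group.

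For the converse implication $(\Leftarrow)$, let $G$ be a torsion countable abelian discrete group. Note that $G$ is necessarily infinite, since a locally compact Spector-Vilenkin group contains a strictly decreasing infinite chain of distinct subgroups. Enumerate $G = \{e, g_1, g_2, \ldots\}$ and set $H_n \ov{\mathrm{def}}{=} \langle g_1, \ldots, g_n \rangle$ for $n \geq 0$, with the convention $H_0 = \{e\}$. Since $G$ is abelian and torsion, every $H_n$ is a finitely generated torsion abelian group, hence finite. The family $(H_n)_{n \geq 0}$ is increasing with $\bigcup_n H_n = G$, and since each $H_n$ is a finite proper subset of the infinite group $G$, one may inductively extract a strictly increasing subsequence $(H_{n_k})_{k \geq 0}$ with $n_0 = 0$ and $H_{n_k} \supsetneq H_{n_{k-1}}$ for $k \geq 1$. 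Setting $G_{-k} \ov{\mathrm{def}}{=} H_{n_k}$ for $k \geq 0$ then yields a strictly decreasing family $(G_k)_{k \in -\N}$ of compact (because finite) open (because $G$ is discrete) subgroups of $G$ satisfying $\bigcap_{k \in -\N} G_k = G_0 = \{e\}$ and $\bigcup_{k \in -\N} G_k = G$, which verifies every condition of Definition \ref{Def-Spector-Vilenkin}.

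The only real subtlety is bookkeeping with the unusual indexing convention: one must observe that under the strictly decreasing hypothesis, $G_0$ is the \emph{smallest} term of the chain (not the largest, as in the $I=\N$ case), so that the intersection condition is equivalent to $G_0 = \{e\}$ and thereby forces discreteness; symmetrically, in the converse construction the chain must be anchored at $G_0 = \{e\}$ rather than extending indefinitely in both directions. Everything else reduces to elementary facts about finitely generated torsion abelian groups and about discrete topological spaces.
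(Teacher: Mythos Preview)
Your proof is correct and follows essentially the same approach as the paper's own argument. Both directions match: the forward implication uses $G_0=\{e\}$ open to force discreteness, finiteness of each $G_k$, and then countability and torsion; the converse builds finite subgroups generated by an initial segment of an enumeration and extracts a strictly increasing subfamily, exactly as the paper does (the paper phrases the extraction via a minimality condition on the number of generators, you phrase it as passing to a subsequence, but these are the same construction).
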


\begin{proof}
Suppose first that $G$ is a locally compact abelian Spector-Vilenkin group with $I = -\N$.
As $G_0= \{ e \}$ is open, the group $G$ is discrete by \cite[Proposition 5 p.~227]{Bou98}. Moreover, for any $k \in -\N$, $G_k \cong G_k / G_0$ is a subgroup of finite order. Then as $G = \bigcup_{k \in -\N} G_k$ is the countable union of finite sets, $G$ is countable. Now let $g \in G$ be arbitrary. Then there exists some $k \in -\N$ such that $g \in G_k$. As $G_k$ is finite, $g$ is of finite order, hence $G$ is torsion.

Conversely, suppose now that $G$ is a countable abelian discrete torsion group. Let $(g_k)_{k \in -\N}$ be any enumeration of the elements in $G$. Then let $G_0 \ov{\mathrm{def}}{=} \{ e \}$. For any $k \in -\N$, define inductively $G_k \ov{\mathrm{def}}{=} \langle g_1, \ldots, g_{-n} \rangle$ as the subgroup generated by the first $n$ elements of the group, where $n$ is the minimal value such that $G_{k-1} \not= G_{k}$. Note that each $G_k$ is finite (what allows us to define the sequence by recursion since $G$ is infinite). Indeed, we have $G_k = \{ g_1^{m_1} \cdots g_{-n}^{m_n} : \: 0 \leq m_j \leq (\text{order of }g_{-j})-1 \}$ since $G$ is a torsion group. Then $|G_k| \leq \prod_{j=1}^n (\text{order of }g_{-j}) < \infty$. So we obtain a strictly increasing sequence $\{e\} = G_0 \subset G_{-1} \subset G_{-2} \subset \cdots \subset G_{k} \subset\cdots \subset G$. Moreover, for any element $g \in G$, there exists $k \in -\N$ such that $g = g_k$. Hence $G = \bigcup_{k \in -\N} G_k$. Each $G_k$ is finite, hence compact and, by discreteness of $G$, open.
\end{proof}
\end{example}

\begin{example} \normalfont
Let $q \not= 2$ be a prime number. By \cite[Example 1.4]{DeV22}, the $(2n+1)$-dimensional Heisenberg group over $\mathbb{Q}_q$ is a constant-order locally compact Vilenkin group.
\end{example}

\begin{example} \normalfont
Let $q \not= 2$ be a prime number. According to \cite[Example 1.6]{DeV22}, the Engel group $\mathbb{E}_4(\mathbb{Q}_q)$ over $\mathbb{Q}_q$ is a 4-dimensional $q$-adic Lie group, which is a constant-order locally compact Vilenkin group.
\end{example}

Finally, we refer to \cite[Example 1.5]{DeV22} for the case of graded groups.


\paragraph{Crowns and radial functions}
Following \cite[Definition V.1.1 p.~61]{Spe70} (see also \cite[p.~193]{Onn88} and \cite[p.~114]{Que87}), we introduce the following definition. We leave to the reader to adapt it to the case $I=-\N$. In the next definition, the notation $A-B$ is used to denote the set-theoretic difference of subsets $A$ and $B$ of $G$.

\begin{defi}
Let $G$ be a locally compact Spector-Vilenkin group with respect to a strictly decreasing sequence $(G_k)_{k \in I}$. A crown of $G$ is a part of $G$ of the form $G_k-G_{k+1}$ for some $k \in I$. We say that a function $f \co G \to \mathbb{C}$ is radial if it is constant on each crown. 
\end{defi}

Recall that if $\phi \in \L^1(\R^n)$ the least radially decreasing majorant of $\phi$ is a function $\mathcal{R}_\phi$ defined in \cite[Theorem 2.3.8 p.~103]{HvNVW16} by $\mathcal{R}_\phi(x) \ov{\mathrm{def}}{=} \underset{|x| \leq |y|}{\mathrm{ess sup}}\, |\phi(y)|$ for almost all $x \in \R^n$. 
This function takes values in $[0,\infty) \cup \{ \infty \}$.
Moreover, it satisfies $\mathcal{R}_\phi(x) = \eta(|x|)$ for some decreasing function $\eta \co [0, \infty) \to [0,\infty) \cup \{ \infty \}$ and $|\phi(x)| \leq \mathcal{R}_\phi(x)$ for almost all $x \in \R^n$. Now, we introduce the following definition whose second part is an analogue of this notion for locally compact Spector-Vilenkin groups, greatly inspired by \cite[p.~193]{Onn88}. 

\begin{defi}
\label{Def-crown}
Let $G$ be a locally compact Spector-Vilenkin group and $(G_k)_{k \in I}$ be the associated sequence with $I=\N$ or $I=\Z$. We consider a left Haar measure $\mu_G$ on $G$ normalized by the condition $\mu_G(G_0)=1$.
\begin{enumerate}
\item Let $s \in G$ and $k \in I$. If $s \in G_k-G_{k+1}$, we let
\begin{equation}	
\label{Abs-mienne}
|s|
\ov{\mathrm{def}}{=} \mu_G(G_k).
\end{equation}
Moreover, we let $|e| \ov{\mathrm{def}}{=} 0$.
\item The radially decreasing majorant of a complex measurable function $\phi \co G \to \mathbb{C}$ is the function $\mathcal{R_\phi} \co G \to \mathbb{R}_+ \cup \{ \infty\}$ defined by
\begin{equation}
\label{radially-majorant}
\mathcal{R}_\phi(s)
\ov{\mathrm{def}}{=} \underset{|s| \leq |r|}{\esssup}\, |\phi(r)|. 
\end{equation}
\end{enumerate}
\end{defi}
We leave it to the reader to adapt these notions for $I=-\mathbb{N}$. We have almost everywhere
\begin{equation}
\label{ine-utile-789}
|\phi(s)| \leq \mathcal{R}_\phi(s).
\end{equation}
Note that $\mathcal{R}_\phi(s)$ depends only on $|s|$ and is a decreasing function of $|s|$. For any element $s \in G - \{ e \}$ and any integer $k \in I$, we have
\begin{equation}
\label{inegal-useful}
 |s| \leq \mu_G(G_k)
\iff s \in G_k.
\end{equation}

\begin{example}
\normalfont
\label{Ex-radial-bis}
Consider the additive group $G=(\K^n,+)$ of $\K^n$, where $\K$ is a non-Archimedean local field and $n \geq 1$. Equipped with the family $(G_k)_{k \in \Z}$ of subgroups defined in \eqref{Gn-q-adic}, it is a locally compact Spector-Vilenkin group by Example \ref{Example-Kn}. 
According to \cite[p.~146]{RaV99}, the function $\norm{\cdot }_{\K^n}$ takes only values in the set $q^{\Z} \cup \{0\}$. We also have 
\begin{equation}
\label{inter-98}
G_k-G_{k+1} 
\ov{\eqref{Gn-q-adic}}{=} \big\{x \in \K^n : \: \norm{x}_{\K^n} = \tfrac{1}{q^k} \big\}.
\end{equation}
According to the formula \cite[Lemma 4.1 p.~128]{Tai75} reproduced in \eqref{int-sphere}, we have 
\begin{equation}
\label{cal-inf66}
\mu_G(G_k-G_{k+1}) 
= \int_{\norm{x}_{\K^n} 
= q^{-k}} \d x
= q^{-kn}(1-q^{-n}).
\end{equation}
Using the disjoint union $G_k=(G_k- G_{k+1}) \cup (G_{k+1}- G_{k+2}) \cup \cdots$, we obtain
\begin{equation}
\label{inter-9877}
\mu_G(G_k) 
= \sum_{l=k}^\infty \mu_G(G_l-G_{l+1}) 
= (1-q^{-n})\sum_{l=k}^\infty q^{-ln} 
\ov{\eqref{cal-inf66}}{=} q^{-kn}.
\end{equation}
For any element $x \in G_k - G_{k+1}$, we infer that 
\begin{equation}
\label{compar-mod-2}
|x| 
\ov{\eqref{Abs-mienne}}{=} \mu_G(G_k) 
\ov{\eqref{inter-9877}}{=} \tfrac{1}{q^{kn}} 
\ov{\eqref{inter-98}}{=} \norm{x}_{\K^n}^n.
\end{equation}
In the particular case $n=1$ and $\K=\mathbb{Q}_q$, we obtain that $|\cdot|=|\cdot|_{q}$ and $\mu_{\mathbb{Q}_q}(G_k)=\frac{1}{q^k}$, which is also stated in \cite[(2.3) p.~40]{VVZ94}.
\end{example}

\section{Maximal inequalities}
\label{Section-maximal}


Let $I$ be a nonempty countable totally ordered set. Let $(\cal{A}_i)_{i \in I}$ be an increasing sequence of $\sigma$-algebras on a measure space $(\Omega,\cal{A},\mu)$ such that each $\cal{A}_i$ is contained in $\cal{A}$ and $\mu$ is $\sigma$-finite on each $\cal{A}_i$. If $1 < p < \infty$, the classical Doob inequality says that for any $f \in \L^p(\Omega)$ we have
$$
\norm{ \sup_{i \in I} \big| \E( f|\cal{A}_i) \big| }_{\L^p(\Omega)} 
\lesssim_p \norm{f}_{\L^p(\Omega)},
$$
where $\E( \cdot|\cal{A}_i) \co \L^p(\Omega) \to \L^p(\Omega)$ is the conditional expectation. See e.g.~\cite[Remark 1.39 p.~27]{Pis16} and \cite[Theorem 6 p.~91]{Ste70}.  Actually, if $n \geq 1$ is an integer there exists an $\ell^2_n$-extension of this inequality providing the estimate (with constant independent of $n$)
\begin{equation}
\label{Ine--Pisier-ell2}
\norm{\bigg(\sup_{i \in I} |\E( f_k |\cal{A}_i)|\bigg)_{1 \leq k \leq n}}_{\L^p(\Omega,\ell^2_n)}
\lesssim_p \bnorm{(f_k)_{1 \leq k \leq n}}_{\L^p(\Omega,\ell^2_n)},
\end{equation}
for any element $f=(f_k)_{1 \leq k \leq n}$ in the Bochner space $\L^p(\Omega,\ell^2_n)$. We refer to \cite[Remark 1.38 p.~27]{Pis16}. See also \cite[Theorem 3.2.7 p.~178]{HvNVW16}. We need a consequence of this inequality for a locally compact Spector-Vilenkin group $G$ with strictly decreasing sequence $(G_k)_{k \in I}$, where $I=\Z$, equipped with a left Haar measure. Let $(A_i)_{i \in I}$ be the family of averaging operators defined by
\begin{equation}
\label{moy-Anf}
(A_if)(s)
\ov{\mathrm{def}}{=} \frac{1}{\mu_G(G_i)} \int_{G_i} f(t^{-1} s) \d t
,\quad f \in \L^p(G),\quad s \in G,\quad i \in I.
\end{equation}
For any function $f \in \L^p(G)$, we also set
\begin{equation}
\label{Def-mmaximal-scalar}
(Mf)(s)
\ov{\mathrm{def}}{=} \sup_{i \in I}\, (A_i(|f|))(s), \quad s \in G.
\end{equation}
Now, we can prove the following result.

\begin{prop}
\label{Prop-max-ell2}
Let $G$ be a unimodular locally compact Spector-Vilenkin group equipped with a Haar measure $\mu$ with $I=\N$ or $I=\Z$. Suppose that $1 < p < \infty$. Let $f = (f_k)_{k \geq 0}$ be an element of the Bochner space $\L^p(G,\ell^2)$. We have
\begin{equation}
\label{Ine-ell2}
\bnorm{(M f_k)_{1 \leq k \leq n}}_{\L^p(G,\ell^2_n)}
\lesssim \bnorm{(f_k)_{1 \leq k \leq n}}_{\L^p(G,\ell^2_n)}.
\end{equation}
\end{prop}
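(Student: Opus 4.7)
The plan is to identify each averaging operator $A_i$ with a martingale conditional expectation and then invoke the vector-valued Doob inequality \eqref{Ine--Pisier-ell2}.

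First, I would rewrite $A_i f$ as an average over a right coset. By applying the successive substitutions $v = t^{-1}$ and $u = vs$ in \eqref{moy-Anf}, both of which preserve the Haar measure by unimodularity of $G$, one obtains
\begin{equation*}
(A_i f)(s) = \frac{1}{\mu_G(G_i)} \int_{G_i s} f(u) \d u, \quad s \in G.
\end{equation*}
Since $G = \bigcup_{k \in I} G_k$ is a countable union of compact sets, it is $\sigma$-compact, so the partition of $G$ into right cosets $\{G_i s : s \in G\}$ is at most countable. Let $\cal{A}_i$ denote the $\sigma$-algebra it generates. Each coset has finite measure $\mu_G(G_i s) = \mu_G(G_i)$ by right-invariance, so $\mu_G$ is $\sigma$-finite on $\cal{A}_i$, and the formula \eqref{cond-exp} gives exactly $A_i f = \E(f | \cal{A}_i)$.

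Next, I would check that the filtration $(\cal{A}_i)_{i \in I}$ is increasing. Because $G_{i+1} \subsetneq G_i$, every right coset of $G_i$ splits as a disjoint union of right cosets of $G_{i+1}$, so the partition associated with $\cal{A}_{i+1}$ refines the one of $\cal{A}_i$; hence $\cal{A}_i \subset \cal{A}_{i+1}$.

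Finally, I would apply the vector-valued Doob inequality \eqref{Ine--Pisier-ell2} to the tuple $(|f_k|)_{1 \leq k \leq n}$. Since the conditional expectation of a nonnegative function is nonnegative, we have $\sup_{i \in I} |\E(|f_k| | \cal{A}_i)| = \sup_{i \in I} A_i(|f_k|) = M f_k$ pointwise by \eqref{Def-mmaximal-scalar}, and replacing each $f_k$ by $|f_k|$ leaves the $\L^p(G,\ell^2_n)$-norm of the tuple unchanged. This delivers \eqref{Ine-ell2}. The only substantive step is the first one: carefully tracking the Haar measure through the two substitutions, which is where the unimodularity hypothesis enters; once $A_i$ has been recast as a conditional expectation along an increasing filtration, the proof reduces to citing \eqref{Ine--Pisier-ell2}.
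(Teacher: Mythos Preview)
Your proposal is correct and follows essentially the same approach as the paper: identify $A_i$ with the conditional expectation $\E(\cdot|\cal{A}_i)$ with respect to the $\sigma$-algebra generated by the right cosets $\{G_i s : s \in G\}$, check that $(\cal{A}_i)_{i \in I}$ is increasing, and then invoke the $\ell^2$-valued Doob inequality \eqref{Ine--Pisier-ell2}. The only differences are cosmetic (you perform the change of variables before computing the conditional expectation rather than after, and you spell out $\sigma$-compactness to justify countability of the cosets), so there is nothing further to say.
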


\begin{proof}
Let $\cal{A}$ be the $\sigma$-algebra of Borel sets on $G$. For each integer $i \in I$, we define $\cal{A}_i$ to be the $\sigma$-subalgebra of $\cal{A}$ generated by the right cosets
$$
\{  G_is : s \in G\}
$$
of the group $G_i$. Let $i,i+1 \in I$. Since $G_i/G_{i+1}$ is finite, we can write $G_i = G_{i+1}t_1 \cup \cdots \cup G_{i+1}t_N$ for some elements $t_1,\ldots,t_N \in G_i$. Then for all $s \in G$, we have $G_is = G_{i+1}t_1 s \cup \cdots \cup G_{i+1} t_N s$. Hence $G_i s \in \cal{A}_{i+1}$. Thus, $\cal{A}_i \subset \cal{A}_{i+1}$ if the integers $i$ and $i+1$ belong to the set $I$.

It is easy to see 
that there are only countably many cosets of $G_i$ in $G$. Consider a family $(G_i s_{i,j})_{j \in J_i}$ of right coset representatives. The conditional expectation $\E(\cdot|\cal{A}_i)$ with respect to the $\sigma$-algebra $\cal{A}_i$ is given by
\begin{equation}
\label{equ-conditional-expection-proof-Prop-max-ell2}
\E(f|\cal{A}_i)
\ov{\eqref{cond-exp}}{=} \sum_{j \in J_i} \bigg[ \frac{1}{\mu(G_i s_{i,j})} \int_{ G_i s_{i,j}} f \d \mu\bigg] 1_{ G_is_{i,j}}.
\end{equation}
If $s \in G$, recall that $s \in  G_is_{i,j}$ if and only if we have $G_is_{i,j} = G_i s$. In this case, 
we obtain that
\begin{align}
\MoveEqLeft
\label{inter-567}
\E(f|\cal{A}_i)(s)         
\ov{\eqref{equ-conditional-expection-proof-Prop-max-ell2}}{=} \sum_{j \in J_i} \bigg[ \frac{1}{\mu( G_is_{i,j})} \int_{G_i s_{i,j}} f \d \mu\bigg] 1_{G_is_{i,j} }(s) 
=\frac{1}{\mu(G_i)} \int_{G_i s } f(u) \d \mu(u) \\
&\ov{t=us^{-1}}{=} \frac{1}{\mu(G_i)} \int_{G_i} f(t s) \d t
=\frac{1}{\mu(G_i)} \int_{G_i} f(t^{-1} s ) \d t
\ov{\eqref{moy-Anf}}{=} (A_if)(s) . \nonumber
\end{align}
%
Appealing to the generalization \eqref{Ine--Pisier-ell2} of Doob's inequality, for any element $(f_k)_{1 \leq k \leq n}$ of the Bochner space $\L^p(G,\ell^2_n)$, we deduce that 
\begin{align*}
\MoveEqLeft
\bnorm{(M f_k)_{1 \leq k \leq n}}_{\L^p(G,\ell^2_n)}
\ov{\eqref{Def-mmaximal-scalar}}{=} \norm{\bigg( \sup_{i \in I}\, A_i(|f_k|)\bigg)_{1 \leq k \leq n}}_{\L^p(G,\ell^2_n)} \\
&\ov{\eqref{moy-Anf}}{=} \norm{\bigg(\sup_{i \in I} |A_i(|f_k|)|\bigg)_{1 \leq k \leq n}}_{\L^p(G,\ell^2_n)} 
\ov{\eqref{inter-567}}{=} \norm{\bigg(\sup_{i \in I} |\E_i(|f_k|\, |\cal{A}_i)|\bigg)_{1 \leq k \leq n}}_{\L^p(G,\ell^2_n)}\\
&\ov{\eqref{Ine--Pisier-ell2}}{\lesssim} \bnorm{(f_k)_{1 \leq k \leq n}}_{\L^p(G,\ell^2_n)}.
\end{align*}
\end{proof}

In a similar manner to Proposition \ref{Prop-max-ell2}, we obtain the following variant where the space $\ell^2$ is replaced by any $\UMD$ Banach function space.

Given a (complete) $\sigma$-finite measure space $(\Omega',\cal{A},\mu)$, a Banach function space $Y$ over $(\Omega',\cal{A},\mu)$ \cite[Chapter I]{BeS88} (see also \cite[Definition l.b.17.~p.~28]{LiT79} for a variant) is a Banach space consisting of equivalence classes, modulo equality almost everywhere, of locally integrable functions on $\Omega'$, equipped with a norm $\norm{\cdot}_Y$, such that the following three properties hold.
\begin{enumerate}
	\item If $f \in Y$ and $g \in \L^0(\Omega')$ with $|g| \leq|f|$ a.e., then $g \in Y$ with $\norm{g}_Y \leq \norm{f}_Y$.

\item For any increasing sequence $(f_n)_{n \geq 1}$ of positive functions of $Y$ with $\sup_{n \geq 1}\norm{f_n}_Y < \infty$ such that $f_n \to f$ almost everywhere, we have $f \in Y$ and $\norm{f}_Y=\sup_{n \geq 1} \norm{f_n}_Y$.

	\item For every $A \in \cal{A}$ with $\mu(A) < \infty$ the indicator function $1_{A}$ of $A$ belongs to the space $Y$.
\end{enumerate}
Every Banach function space is a complex Banach lattice in the obvious order, i.e.~$f \geq 0$ iff $f(\omega')\geq 0$ almost everywhere.

Recall \cite[Theorem 3 p.~212]{Rub86} \cite{Bou84} that a complex Banach lattice $Y$ is $\UMD$ if and only if $Y$ and $Y^*$ have the Hardy-Littlewood property. Let $Y = Y(\Omega')$ be a Banach function space. Further, let $(\cal{A}_i)_{i \in I}$ be a filtration on a measure space $(\Omega,\cal{A},\mu)$ such that $\mu$ is $\sigma$-finite on each $\cal{A}_i$ and denote by $\E_i$ the corresponding conditional expectation operators. 

Suppose that $Y$ has the Hardy-Littlewood property, meaning that the dyadic lattice maximal function $M_d(f)(x,\omega') = \sup_{I \ni x, \: I \in \mathcal{D}} \frac{1}{|I|} \left| \int_T f(y,\omega') \d y \right|$ is bounded on the Bochner space $\L^p([0,1),Y)$, where $\mathcal{D}$ is the family of dyadic intervals on the unit interval $[0,1)$ for all $1 < p < \infty$. Then according to \cite[Lemma 3.4 p.~19]{DKK21} the lattice maximal function
\begin{equation}
M(f)(x,\omega') 
\ov{\mathrm{def}}{=} \sup_{i \in I} |\E_i f(x,\omega')| ,
\quad x \in \Omega, \quad \omega' \in \Omega' ,
\end{equation}
defines a bounded operator on the Bochner space $\L^p(\Omega,Y)$ for any $p \in (1,\infty)$. For a locally compact Spector-Vilenkin group $G = \bigcup_{i \in I} G_i$, we let
\begin{equation}
\label{equ-def-mmaximal-UMD}
M(f)(s,\omega') 
\ov{\mathrm{def}}{=} \sup_{i \in I}\, (A_i(|f(\cdot,\omega')|))(s), \quad f \in \L^p(G,Y(\Omega')),\: s \in G, \omega' \in \Omega'.
\end{equation}

\begin{prop}
\label{Prop-max-UMD-lattice}
Let $G$ be a unimodular locally compact Spector-Vilenkin group with $I=\Z$ or $I=\N$. Suppose that $1 < p < \infty$. Let $Y = Y(\Omega')$ be a $\UMD$ Banach function space. For any element $f$ of the Bochner space $\L^p(G,Y)$, we have
\[ 
\norm{ M(f) }_{\L^p(G,Y)} 
\lesssim \norm{f}_{\L^p(G,Y)}. 
\]
\end{prop}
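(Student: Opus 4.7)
The plan is to mimic the proof of Proposition \ref{Prop-max-ell2}, replacing the $\ell^2$-valued Doob inequality by the $\UMD$ Banach function space version of the lattice maximal function inequality already cited from \cite[Lemma 3.4]{DKK21}. The only thing that needs to be verified is that the averaging operators $A_i$ coincide with genuine conditional expectations with respect to a suitable filtration on $G$; the rest is a direct application of the cited result.

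More precisely, I would first introduce on $G$ exactly the same $\sigma$-algebras as in the proof of Proposition \ref{Prop-max-ell2}: let $\cal{A}$ be the Borel $\sigma$-algebra and, for each $i \in I$, let $\cal{A}_i$ be the sub-$\sigma$-algebra generated by the right cosets $\{G_i s : s \in G\}$. Using that $G_{i+1}$ has finite index in $G_i$, one checks as before that $(\cal{A}_i)_{i \in I}$ is an increasing filtration. Since $G$ is $\sigma$-compact (being a countable union of compact sets $G_k$) and unimodular with Haar measure finite on each $G_i$, the restriction of $\mu_G$ to each $\cal{A}_i$ is $\sigma$-finite, so the conditional expectations $\E(\cdot|\cal{A}_i)$ are well-defined. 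Repeating the computation \eqref{inter-567} verbatim (which only uses unimodularity and the countable partition of $G$ into right cosets of $G_i$) yields the identity
\begin{equation*}
\E(f|\cal{A}_i)(s)
= (A_i f)(s), \quad s \in G,\: i \in I,
\end{equation*}
for every suitable scalar function $f$ on $G$.

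Next, I would apply this coordinatewise in $Y$. For $f \in \L^p(G, Y(\Omega'))$, the function $|f(\cdot,\omega')|$ lies in $\L^p(G)$ for almost every $\omega' \in \Omega'$, and the identification of $A_i$ with $\E(\cdot|\cal{A}_i)$ combined with definition \eqref{equ-def-mmaximal-UMD} gives
\begin{equation*}
M(f)(s,\omega')
= \sup_{i \in I} \big|\E(|f(\cdot,\omega')|\,|\cal{A}_i)(s)\big|.
\end{equation*}
This is precisely the lattice maximal function associated with the filtration $(\cal{A}_i)_{i \in I}$ on $G$, applied to $|f|$. Since $Y$ is a $\UMD$ Banach function space, both $Y$ and $Y^*$ have the Hardy-Littlewood property by the result of Rubio de Francia and Bourgain cited in the paper, so the maximal function inequality from \cite[Lemma 3.4]{DKK21} yields
\begin{equation*}
\norm{M(f)}_{\L^p(G,Y)}
\lesssim \norm{\,|f|\,}_{\L^p(G,Y)}
= \norm{f}_{\L^p(G,Y)},
\end{equation*}
which is the desired estimate.

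The main (and essentially the only) conceptual step is the identification $A_i = \E(\cdot|\cal{A}_i)$, for which the computation has already been carried out in the proof of Proposition \ref{Prop-max-ell2} and extends without change to the Bochner setting. No genuine difficulty remains: the work has been pushed into the black-box result of Deleaval-Kemppainen-Kriegler, so the argument is essentially a translation between the language of averages on the group and the language of martingale filtrations.
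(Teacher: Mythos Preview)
Your proposal is correct and follows essentially the same approach as the paper's proof: introduce the same $\sigma$-algebras $\cal{A}_i$ as in the proof of Proposition \ref{Prop-max-ell2}, identify $A_i$ with the conditional expectation $\E(\cdot|\cal{A}_i)$, observe that a $\UMD$ Banach function space has the Hardy-Littlewood property, and apply \cite[Lemma 3.4]{DKK21}. The paper's own proof is in fact even terser than yours, consisting of exactly these three ingredients in two sentences.
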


\begin{proof}
We consider the $\sigma$-algebras $\cal{A}$ and $\cal{A}_i$ as in the proof of Proposition \ref{Prop-max-ell2}. Note that $Y$ has the Hardy-Littlewood property. Using the previously described result \cite[Lemma 3.4 p.~19]{DKK21}, we obtain
\[ 
\norm{ \sup_{i}\, (A_i(|f(\cdot,\omega')|)(s)}_{\L^p(G,Y)} 
\lesssim \norm{f}_{\L^p(G,Y)}. 
\]
\end{proof}

We leave it to the reader to adapt these results for $I=-\N$. 

\section{$R$-boundedness of some family of convolution operators}
\label{Sec-R-boundedness-of-some-family}

If $G$ is a locally compact group equipped with a left Haar measure $\mu_G$,  recall that the convolution product of two functions $f$ and $g$ is given, when say $f,g \in \L^1(G)$ by
\begin{equation}
\label{Convolution-formulas}
(g*f)(s)
\ov{\mathrm{def}}{=} \int_G g(t)f(t^{-1}s) \d \mu_G(t), \quad s \in G.
\end{equation}
Moreover, the convolution exists as an integral for $f,g$ belonging to some different function spaces, see the results below.
The following is an analogue of \cite[Proposition 2.3.9 p.~104]{HvNVW18} and \cite[p.~82 and p.~84]{Gra14a}. See also \cite{Lor19} for related results. Recall that the radially decreasing majorant $\mathcal{R}_\phi$ is defined in \eqref{radially-majorant}. 

\begin{prop}
\label{Prop-majoration-decreasing}
Let $G$ be a locally compact Vilenkin group. Let $f \in \L^1_\loc(G)$ and $\phi \in \L^1(G)$. For almost all $s \in G$, we have
\begin{equation}
\label{majo-useful-66}
|(\phi*f)(s)|
\leq \norm{\mathcal{R}_\phi}_{\L^1(G)}(M f)(s)
\end{equation}
whenever the right-hand side is finite.
\end{prop}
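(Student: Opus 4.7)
The strategy is to exploit the fact that in the ultrametric setting a radially decreasing function is necessarily a step function supported on the nested subgroups $(G_k)_{k \in I}$, and therefore can be written as a positive combination of indicator functions $1_{G_k}$. This reduces the estimate to an averaging inequality on each subgroup.

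We may assume $\norm{\cal{R}_\phi}_{\L^1(G)} < \infty$, otherwise the inequality is trivial. First, observe that by \eqref{ine-utile-789} and \eqref{Convolution-formulas},
\begin{equation*}
|(\phi * f)(s)|
\leq \int_G |\phi(t)| \, |f(t^{-1}s)| \d t
\leq \int_G \cal{R}_\phi(t) \, |f(t^{-1}s)| \d t.
\end{equation*}
Next, since $\cal{R}_\phi$ depends only on $|t|$ and $|t|$ takes only the discrete values $\mu_G(G_k)$, the function $\cal{R}_\phi$ is constant on each crown $G_k - G_{k+1}$; call its value there $\alpha_k$. As $k$ increases the subgroup $G_k$ shrinks, so $|t|$ decreases, and by monotonicity of the radially decreasing majorant the sequence $(\alpha_k)$ is increasing in $k$. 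Using $1_{G_k - G_{k+1}} = 1_{G_k} - 1_{G_{k+1}}$ and telescoping, one rewrites
\begin{equation*}
\cal{R}_\phi
= \sum_{k \in I} \alpha_k \big(1_{G_k} - 1_{G_{k+1}}\big)
= \sum_{k \in I} c_k \, 1_{G_k},
\end{equation*}
where $c_k \ov{\mathrm{def}}{=} \alpha_k - \alpha_{k-1} \geq 0$ (the boundary term at $k \to -\infty$ vanishes thanks to $\cal{R}_\phi \in \L^1$, and the case $I = \N$ or $I = -\N$ is handled analogously).

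Substituting this decomposition and applying the monotone convergence theorem, we obtain
\begin{equation*}
\int_G \cal{R}_\phi(t) \, |f(t^{-1}s)| \d t
= \sum_{k \in I} c_k \int_{G_k} |f(t^{-1}s)| \d t
= \sum_{k \in I} c_k \, \mu_G(G_k) \, (A_k|f|)(s),
\end{equation*}
where we used the defining formula \eqref{moy-Anf} of the averaging operator. By definition \eqref{Def-mmaximal-scalar} of the maximal operator, $(A_k|f|)(s) \leq (Mf)(s)$ for each $k$. Therefore
\begin{equation*}
|(\phi*f)(s)|
\leq (Mf)(s) \sum_{k \in I} c_k \, \mu_G(G_k)
= (Mf)(s) \int_G \cal{R}_\phi(t) \d t
= \norm{\cal{R}_\phi}_{\L^1(G)} \, (Mf)(s),
\end{equation*}
where the identity $\sum_k c_k \mu_G(G_k) = \norm{\cal{R}_\phi}_{\L^1(G)}$ is a direct consequence of the layer-cake decomposition above.

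The only delicate point is the decomposition of $\cal{R}_\phi$ into a nonnegative series of indicators $1_{G_k}$, especially when $I = \Z$: one must verify that the boundary value $\lim_{k \to -\infty} \alpha_k$ vanishes, which follows from the integrability hypothesis $\cal{R}_\phi \in \L^1(G)$ combined with $\mu_G(G_k) \to \infty$ as $k \to -\infty$. Beyond this, the argument is a purely combinatorial interchange of sum and integral, much simpler than the Euclidean analogue precisely because the ultrametric structure forces $\cal{R}_\phi$ to be a genuine step function rather than a limit of them.
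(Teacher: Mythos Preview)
Your proof is correct and follows essentially the same approach as the paper: both decompose $\mathcal{R}_\phi$ as a nonnegative series $\sum_k c_k 1_{G_k}$ (the paper writes this as $\mathcal{R}_\phi(s)=\sum_{|s|\leq \mu_G(G_k)} u_k$ with $u_k=c_k$), compute $\norm{\mathcal{R}_\phi}_{\L^1}=\sum_k c_k\,\mu_G(G_k)$, and then bound each term by the maximal operator. If anything, your telescoping argument and discussion of the boundary term at $k\to -\infty$ justify the existence of this decomposition more carefully than the paper, which simply asserts it.
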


\begin{proof}
Since $\mathcal{R}_\phi(s)$ depends only on $|s|$ and is a decreasing function of $|s|$, there exists some sequence $(u_k)_{k \in \Z}$ of positive real numbers such that
\begin{equation}
\label{Phi-as-sum}
\mathcal{R}_\phi(s)
=\sum_{k \in \Z,|s| \leq \mu_G(G_k)} u_k, \quad s \in G.
\end{equation}
Using Fubini's theorem in the second equality, we see that
\begin{align}
\MoveEqLeft
\label{Equa-inter-123}
\sum_{k \in \Z} \mu_G(G_k) u_k 
=\sum_{k \in \Z} \bigg(\int_G 1_{G_k} \d\mu_G\bigg) u_k  \\
&=\int_G \sum_{k \in \Z} 1_{G_k}(s)u_k \d s \nonumber
\ov{\eqref{inegal-useful}}{=} \int_G \sum_{|s| \leq \mu_G(G_k)} u_k \d s \nonumber 
\ov{\eqref{Phi-as-sum}}{=} \int_G \mathcal{R}_\phi(s) \d s \nonumber \\
&=\norm{\mathcal{R}_\phi}_{\L^1(G)}. \nonumber   
\end{align}
For almost all $s \in G$, using again Fubini's theorem in the fourth equality, we obtain
\begin{align*}
\MoveEqLeft
|(\phi*f)(s)|           
\ov{\eqref{Convolution-formulas}}{=} \left|\int_{G} \phi(t)f(t^{-1}s) \d t\right| 
\leq \int_{G} |\phi(t)| |f(t^{-1}s)| \d t \\
&\ov{\eqref{ine-utile-789}}{\leq} \int_{G} \mathcal{R}_\phi(t) |f(t^{-1}s)| \d t
\ov{\eqref{Phi-as-sum}}{=} \int_{G} \bigg(\sum_{|t| \leq \mu_G(G_k)} u_k\bigg)|f(t^{-1}s)| \d t \\
&\ov{\eqref{inegal-useful}}{=} \int_{G} \sum_{k \in \Z} 1_{G_k}(t) u_k |f(t^{-1}s)| \d t
= \sum_{k \in \Z} \bigg(\int_{G_k} |f(t^{-1}s)| \d t\bigg)u_k \\
&=\sum_{k \in \Z} \mu_G(G_k) \bigg(\frac{1}{\mu_G(G_k)} \int_{G_k} |f(t^{-1}s)| \d t\bigg)u_k \\
&\ov{\eqref{Equa-inter-123}}{\leq} \norm{\cal{R}_\phi}_{\L^1(G)} \sup_{k \in \Z} \frac{1}{\mu_G(G_k)} \int_{G_k} |f(t^{-1}s)| \d t \\
&\ov{\eqref{moy-Anf}}{=} \norm{\cal{R}_\phi}_{\L^1(G)} \sup_{k \in \Z}\, (A_k(|f|))(s)
\ov{\eqref{Def-mmaximal-scalar}}{=} \norm{\cal{R}_\phi}_{\L^1(G)}(M f)(s).
\end{align*} 
\end{proof}

The following is an analogue of \cite[Proposition 8.2.1 p.~184]{HvNVW18} for locally compact Spector-Vilenkin groups. Recall that $M$ is defined in \eqref{Def-mmaximal-scalar}.

\begin{prop}
\label{Prop-pointwise-domination}
Let $G$ be a unimodular locally compact Spector-Vilenkin group. Suppose that $1 < p < \infty$. The family $\mathscr{T} \ov{\mathrm{def}}{=} \{T \co \L^p(G) \to \L^p(G) : |T(f)| \leq M(|f|)\}$ is $R$-bounded with
\begin{equation}
\label{}
\mathscr{R}_p(\mathscr{T})
\leq C_{G,p}
\end{equation}
for some constant $C_{G,p}$. Moreover, if $Y$ is a $\UMD$ Banach function space, then the family $\mathscr{T}_Y \ov{\mathrm{def}}{=} \{ T \co \L^p(G,Y) \to \L^p(G,Y) : |T(f)(s,\omega)| \leq M|f|(s,\omega) \}$ is $R$-bounded over the Bochner space $\L^p(G,Y)$.
\end{prop}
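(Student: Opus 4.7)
The strategy is to invoke Maurey's equivalence \eqref{equiv-lattice} to rewrite the $R$-bound as an $\ell^2_n$-valued square function estimate, use the defining pointwise domination to replace the operator image by the maximal function, and then conclude with the maximal inequalities of Section~\ref{Section-maximal}. Both assertions follow this same blueprint.

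For the scalar case, I would fix operators $T_1,\ldots,T_n \in \mathscr{T}$ and functions $f_1,\ldots,f_n \in \L^p(G)$. Since $\L^p(G)$ is a Banach lattice of finite cotype, \eqref{equiv-lattice} gives the identification
\[
\Bgnorm{\sum_{k=1}^n \epsi_k \ot T_k(f_k)}_{\L^p(\Omega,\L^p(G))} \approx \Bgnorm{\bigg(\sum_{k=1}^n |T_k(f_k)|^2\bigg)^{\frac{1}{2}}}_{\L^p(G)}.
\]
The defining pointwise bound $|T_k(f_k)| \leq M(f_k)$ then dominates the right-hand side by $\|(\sum_{k=1}^n |M(f_k)|^2)^{\frac{1}{2}}\|_{\L^p(G)}$, which Proposition~\ref{Prop-max-ell2} controls by $\|(\sum_{k=1}^n |f_k|^2)^{\frac{1}{2}}\|_{\L^p(G)}$ with constant independent of $n$. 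A final application of \eqref{equiv-lattice} returns to the Rademacher average of the $f_k$, yielding the $R$-bound for $\mathscr{T}$.

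For the $Y$-valued assertion, the same skeleton applies: the Bochner space $\L^p(G,Y)$ is again a Banach function space of finite cotype (the UMD property forces finite cotype on $Y$, and this passes to $\L^p(G,Y)$), so Maurey's equivalence together with the pointwise bound $|T_k(f_k)(s,\omega')| \leq M(|f_k|)(s,\omega')$ reduces the problem to estimating
\[
\Bgnorm{\bigg(\sum_{k=1}^n |M(|f_k|)|^2\bigg)^{\frac{1}{2}}}_{\L^p(G,Y)}
\]
by the analogous expression with $f_k$ in place of $M(|f_k|)$. I would identify this quantity as the Bochner norm of the tuple $(M(|f_k|))_{k=1}^n$ in $\L^p(G, Y(\ell^2_n))$ and invoke Proposition~\ref{Prop-max-UMD-lattice} with $Y$ replaced by the Banach function space $Y(\ell^2_n)$, after which a last use of \eqref{equiv-lattice} closes the argument.

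The principal obstacle is justifying that $Y(\ell^2_n)$ is itself a UMD Banach function space with UMD constant bounded uniformly in $n$, so that Proposition~\ref{Prop-max-UMD-lattice} applies with a constant independent of the dimension. This rests on Bourgain's classical stability theorem asserting that the UMD class is closed under passing to $\L^2$-valued Bochner spaces, with a quantitative estimate depending only on the UMD constant of $Y$. Once this stability step is secured, the identification of $M$ on $\L^p(G, Y(\ell^2_n))$ with the componentwise scalar maximal operator $(f_k) \mapsto (M|f_k|)_k$ is routine, and the whole chain of inequalities goes through with dimension-free constants.
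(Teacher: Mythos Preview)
Your proposal is correct and follows essentially the same route as the paper: Maurey's equivalence, pointwise domination, the $\ell^2$-valued maximal inequality (Proposition~\ref{Prop-max-ell2}) for the scalar case, and Proposition~\ref{Prop-max-UMD-lattice} applied to $Y(\ell^2)$ for the vector-valued case. The paper handles the obstacle you flag---that $Y(\ell^2)$ inherits the UMD property with dimension-free constant---by citing \cite[Lemma 2.9 p.~4654--4655]{DeK23} rather than invoking Bourgain directly, but the content is the same.
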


\begin{proof}
Let $n \geq 1$ be an integer and consider $T_1,\ldots,T_n \in \mathscr{T}$. Using Proposition \ref{Prop-max-ell2} in the fourth step, we obtain for any functions $f_1,\ldots,f_n \in \L^p(G)$
\begin{align*}
\MoveEqLeft
\norm{\sum_{k=1}^{n} \epsi_k \ot T_k(f_k) }_{\L^p(\Omega,\L^p(G))}     
\ov{\eqref{equiv-lattice}}{\approx}  \norm{\bigg(\sum_{k=1}^{n} |T_k(f_k)|^2 \bigg)^{\frac{1}{2}}}_{\L^p(G)} \\
&\leq \norm{\bigg(\sum_{k=1}^{n} (M(f_k))^2 \bigg)^{\frac{1}{2}}}_{\L^p(G)} \\
&=\norm{\big(M(f_k)\big)_{1 \leq k \leq n}}_{\L^p(G,\ell^2_n)} 
\ov{\eqref{Ine-ell2}}{\lesssim_{G,p}} \norm{(f_k)_{1 \leq k \leq n}}_{\L^p(G,\ell^2_n)} \\
&=\norm{\bigg(\sum_{k=1}^{n} |f_k|^2 \bigg)^{\frac{1}{2}}}_{\L^p(G)}
\ov{\eqref{equiv-lattice}}{\approx} \norm{\sum_{k=1}^{n} \epsi_k \ot f_k}_{\L^p(\Omega,\L^p(G))}.
\end{align*}
We turn to the last sentence of the statement. Since a $\UMD$ Banach space has finite cotype by \cite[Proposition 7.3.15 p.~99]{HvNVW18}, the equivalence \eqref{equiv-lattice} also holds in the space $\L^p(G,Y)$. Note that when $Y = Y(\Omega)$ is a $\UMD$ Banach function space, then the vector-valued space 
$$
Y(\Omega,\ell^2) 
\ov{\mathrm{def}}{=} \{ f \co \Omega \to \ell^2 :\: f \text{ is strongly measurable and } \omega \mapsto \norm{f(\omega)}_{\ell^2} \text{ belongs to } Y\},
$$
with norm $\norm{f}_{Y(\Omega,\ell^2)} \ov{\mathrm{def}}{=} \bnorm{ \norm{F(\cdot)}_{\ell^2} }_Y$, is also a $\UMD$ Banach function space according to \cite[Lemma 2.9 p.~4654-4655]{DeK23}. Then argue as before, this time using Proposition \ref{Prop-max-UMD-lattice} instead of Proposition \ref{Prop-max-ell2}, with the $\UMD$ Banach function space $Y(\Omega,\ell^2)$ .
\end{proof}

 If $1 < p < \infty$, note that a sufficient condition for the $R$-boundedness of a family $(C_K)_{K \in \cal{K} }$ of integral operators $C_K \co \L^p(\R^n,X) \to \L^p(\R^n,X)$, $f \mapsto K*f$ on the Bochner space $\L^p(\R^n, X)$ is well-known if $X$ is $\UMD$ \textit{Banach function space}. Indeed, if $M$ is the Hardy-Littlewood maximal operator and if
\begin{equation*}
\cal{K} 
\ov{\mathrm{def}}{=} \big\{ K \in \L^1(\R^n) : |K| * |f| \leq Mf \text{ a.e. for all simple function } f  \co \R^n \to \mathbb{C} \big\}
\end{equation*}
then the family $(C_K)_{K \in \cal{K} }$ of operators is $R$-bounded on the Bochner space $\L^p(\R^n,X)$, see e.g.~\cite[Proposition 3.1 p.~371]{Lor19}
. As observed in \cite[p.~371]{Lor19}, this class of kernels contains any radially decreasing $K \in \L^1(\R^n)$ with $\norm{K}_{\L^1(\R^n)} \leq 1$. The following is an analogue of \cite[Proposition 8.2.3 p.~184]{HvNVW18}.

\begin{thm}
\label{Th-angle-scalar-Vilenkin}
Let $G$ be a unimodular locally compact Spector-Vilenkin group. Let $\scr{K}$ be the subset of all $K \in \L^1(G)$ such that
\begin{equation}
\label{inter11}
\int_G \underset{|s| \leq |r|}{\esssup}\, |K(r)| \d s
\lesssim 1.
\end{equation}
Suppose that $1 < p < \infty$. Then the family $\scr{T}_{\scr{K}} \ov{\mathrm{def}}{=} \{f \mapsto K*f : \: K \in \scr{K}\}$ of convolution operators acting on the space $\L^p(G)$ is $R$-bounded and
\begin{equation}
\label{}
\scr{R}_p(\scr{T}_{\scr{K}})
\leq C_{G,p}
\end{equation}
for some constant $C_{G,p}$. 
\end{thm}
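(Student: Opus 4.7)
The plan is to combine the two preceding propositions in a direct manner, since hypothesis \eqref{inter11} is essentially the statement that the $\L^1$-norm of the radially decreasing majorant of $K$ is uniformly bounded over $K \in \scr{K}$. More precisely, by the very definition \eqref{radially-majorant} of $\mathcal{R}_K$, the condition \eqref{inter11} reads $\norm{\mathcal{R}_K}_{\L^1(G)} \leq C$ for some constant $C \geq 0$ independent of $K \in \scr{K}$.

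First, I would apply Proposition \ref{Prop-majoration-decreasing} pointwise: for any $K \in \scr{K}$ and any $f \in \L^p(G)$, we have
\[
|(K*f)(s)|
\leq \norm{\mathcal{R}_K}_{\L^1(G)} (Mf)(s)
\leq C \cdot (Mf)(s) \quad \text{for almost every } s \in G.
\]
Consequently, each rescaled convolution operator $\frac{1}{C}(K * \cdot)$ belongs to the family $\mathscr{T}$ defined in Proposition \ref{Prop-pointwise-domination}, i.e., it is pointwise dominated by $M$.

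Then I would invoke Proposition \ref{Prop-pointwise-domination}, which says that $\mathscr{T}$ is $R$-bounded on $\L^p(G)$ with an $R$-bound depending only on $G$ and $p$. Rescaling back by the factor $C$, the family $\scr{T}_{\scr{K}}$ is also $R$-bounded, with $R$-bound at most $C \cdot C_{G,p}$, which yields the desired conclusion. I do not anticipate any substantive obstacle here, since all the heavy lifting (the maximal inequality of Proposition \ref{Prop-max-ell2} and the pointwise domination principle of Proposition \ref{Prop-pointwise-domination}) has already been established; the present theorem is a clean corollary of these two results together with the interpretation of the integrability condition \eqref{inter11} as an $\L^1$-bound on the radially decreasing majorant.
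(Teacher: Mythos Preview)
Your proposal is correct and follows essentially the same route as the paper: apply Proposition~\ref{Prop-majoration-decreasing} to get the pointwise bound $|(K*f)(s)| \lesssim (Mf)(s)$, then invoke Proposition~\ref{Prop-pointwise-domination}. The only cosmetic difference is that the paper absorbs the implicit constant from \eqref{inter11} directly rather than tracking it explicitly and rescaling as you do.
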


\begin{proof}
Let $f \in \L^p(G)$ and consider some $K \in \scr{K}$. Recall that $\mathcal{R}_K(s) \ov{\eqref{radially-majorant}}{=} \underset{|s| \leq |r|}{\esssup}\, |K(r)|$. By Proposition \ref{Prop-majoration-decreasing}, we have for almost all $s \in G$
$$
|(K*f)(s)| 
\ov{\eqref{majo-useful-66}}{\leq} \norm{\mathcal{R}_K}_{\L^1(G)} (M f)(s)
= \bigg(\int_G \underset{|s| \leq |r|}{\esssup}\, |K(r)| \d s\bigg) (M f)(s)
\ov{\eqref{inter11}}{\lesssim} (M f)(s).
$$
Now, it suffices to use Proposition \ref{Prop-pointwise-domination}.
\end{proof}

In a similar manner to the proof of Theorem \ref{Th-angle-scalar-Vilenkin}, by applying Proposition \ref{Prop-max-UMD-lattice} instead of Proposition \ref{Prop-max-ell2}, we obtain the following <<$\UMD$ Banach function space-valued>> variant.

\begin{thm}
\label{Th-angle-UMD-lattice-Vilenkin}
Let $G$ be a unimodular locally compact Spector-Vilenkin group. Let $Y$ be a $\UMD$ Banach function space. Suppose that $1 < p < \infty$. Let $\scr{K}$ be the same subset of $\L^1(G)$ as in the statement of Theorem \ref{Th-angle-scalar-Vilenkin} and $\scr{T_{\scr{K}}}$ the associated family of convolution operators acting on the Bochner space $\L^p(G,Y)$. Then this family is $R$-bounded and
\begin{equation}
\label{}
\scr{R}_p(\scr{T}_{\scr{K}})
\leq C_{G,p,Y}
\end{equation}
for some constant $C_{G,p,Y}$.
\end{thm}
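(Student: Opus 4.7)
The plan is to reduce the claim to pointwise domination by the lattice maximal operator $M$ in \eqref{equ-def-mmaximal-UMD}, and then invoke the last sentence of Proposition \ref{Prop-pointwise-domination}. This mirrors the scalar proof of Theorem \ref{Th-angle-scalar-Vilenkin}, with Proposition \ref{Prop-max-UMD-lattice} playing the role formerly played by Proposition \ref{Prop-max-ell2}.

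Fix $K \in \scr{K}$ and a strongly measurable $f \in \L^p(G, Y)$ with $Y = Y(\Omega')$. Since the convolution $(K * f)(s, \omega') = \int_G K(t) f(t^{-1} s, \omega') \d t$ acts only in the group variable, I would repeat the argument of Proposition \ref{Prop-majoration-decreasing} slice-wise: for almost every $\omega' \in \Omega'$, the decomposition of $\cal{R}_K$ as a sum indexed by the subgroups $G_k$, followed by Fubini and the identification of the averaging operators $A_k$ defined in \eqref{moy-Anf}, yields
\begin{equation*}
|(K * f)(s, \omega')| \leq \norm{\cal{R}_K}_{\L^1(G)} \sup_{k \in I} \big(A_k(|f(\cdot, \omega')|)\big)(s) = \norm{\cal{R}_K}_{\L^1(G)} M(f)(s, \omega')
\end{equation*}
for almost every $s \in G$. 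Combined with the assumption \eqref{inter11}, which gives $\norm{\cal{R}_K}_{\L^1(G)} \lesssim 1$ uniformly in $K \in \scr{K}$, this produces a constant $C = C_G$ such that $|C_K(f)(s, \omega')| \leq C \cdot M(f)(s, \omega')$ almost everywhere on $G \times \Omega'$, uniformly in $K \in \scr{K}$.

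Hence, up to the factor $C$, each convolution operator $C_K$ belongs to the family $\scr{T}_Y$ of Proposition \ref{Prop-pointwise-domination}. Its last sentence asserts that $\scr{T}_Y$ is $R$-bounded on the Bochner space $\L^p(G, Y)$, so the $R$-bound of $\scr{T}_{\scr{K}}$ is at most $C \cdot \scr{R}_p(\scr{T}_Y)$, giving the desired constant $C_{G,p,Y}$.

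The one technical point to nail down is the bookkeeping for the slice-wise argument: one must verify that the decomposition of $\cal{R}_K$ and the Fubini interchange are compatible with the $Y$-valued convolution and with the essential suprema entering the definition of $\cal{R}_K$. This is routine, because $K \in \L^1(G)$ together with $f \in \L^p(G, Y)$ ensures that $t \mapsto K(t) f(t^{-1} s, \cdot)$ is Bochner integrable in $Y$ for almost every $s$, and the pointwise inequality \eqref{ine-utile-789} applied to $|f(\cdot, \omega')|$ only involves a scalar function. No obstacle of substance arises beyond this; everything else is a transcription of the scalar proof in the $\UMD$ Banach function space setting.
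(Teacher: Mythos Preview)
Your proposal is correct and follows essentially the same route as the paper: apply Proposition \ref{Prop-majoration-decreasing} slice-wise in $\omega'$ to obtain the pointwise domination $|(K*f)(s,\omega')| \lesssim M(f)(s,\omega')$, then invoke the second part of Proposition \ref{Prop-pointwise-domination}. Your write-up is in fact more careful than the paper's two-line proof about the constant $\norm{\cal{R}_K}_{\L^1(G)}$ and the Bochner-integrability bookkeeping.
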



\section{Application to functional calculus}
\label{Section-Heat-p-adic-semigroup}

In this section, we consider a non-Archimedean local field $\mathbb K$ and an integer $n \geq 1$. By Example \ref{Example-Kn}, the additive group of $\mathbb K^n$ is a locally compact Spector-Vilenkin group. Recall the notation $\norm{x}_{\mathbb K^n} = \max_{1 \leq i \leq n} |x_i|_{\mathbb K}$ from \eqref{norm-max} for any $x=(x_1,\ldots,x_n)$ in $\K^n$.
We further fix a non-constant character $\chi \co \mathbb K \to \C$ of $\mathbb K$.  
Recall that $q$ is the cardinal of the quotient $\cal{O}/\frak{p}$. See \cite[p.~15]{Tai75}. We will use the notation $x \cdot y \ov{\mathrm{def}}{=} x_1y_1 + \cdots + x_n y_n$, previously introduced.

\paragraph{Convergent integrals}
A certain number of integrals over $\mathbb K^n$ considered in this section do not converge absolutely. However, this can be repaired by restricting the domain of integration first to a sphere $\{ y \in \mathbb K^n : \norm{y}_{\mathbb K^n} = q^{-k}  \}$ where $k \in \Z$, and then summing over $k \in \Z$. Here, the series over $k \in \Z$ will always be absolutely convergent. Let us turn to the details.

For any integer $k \in \Z$ and any element $x \in \K^n$ we need the value of the classical integral\footnote{\thefootnote. In the second case, the integral is equal to $-\norm{x}_{\mathbb K^n}^{-n}$.} \cite[Lemma 4.1 p.~128]{Tai75}
%
\begin{equation}
\label{equ-proof-prop-norm-alpha-negative-definite}
\int_{\norm{y}_{\K^n} = q^{-k}} \chi(x\cdot y) \d y 
= \begin{cases}
q^{-kn}(1-q^{-n}) & \text{ if }  \norm{x}_{\K^n} \leq q^{k} \\ 
-q^{-n(k+1)}  & \text{ if } \norm{x}_{\K^n} = q^{k+1} \\ 0 & 
\text{ if } \norm{x}_{\K^n} \geq q^{k+2}
\end{cases},
\end{equation}
where we refer to the notion of improper integrals from \eqref{improper}.
In particular with $x=0$, we obtain the formula
\begin{equation}
\label{int-sphere}
\int_{\norm{y}_{\K^n} = q^{-k}}  \d y 
= q^{-kn}(1-q^{-n})
=q^{-kn}-q^{-(k+1)n},
\end{equation}
see also \cite[(15.4) p.~64]{Vla99}. We also  need the following lemma. 

\begin{lemma}
\label{lem-integrals-over-spheres}
If $x \in \K^n$ and $\alpha >0$, then the measurable function $F \co \mathbb{K}^n \to \C$, $y \mapsto \frac{1 - \chi(x \cdot y)}{\norm{y}_{\mathbb K^n}^{\alpha + n}}$ admits an integral over $\mathbb K^n - \{ 0 \}$ which converges in the improper sense of \eqref{improper} with an absolutely convergent series. 
\end{lemma}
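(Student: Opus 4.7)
The plan is to decompose the integrand and apply the explicit sphere-integral formulas \eqref{int-sphere} and \eqref{equ-proof-prop-norm-alpha-negative-definite} term-by-term, then show that the resulting series of scalars converges absolutely thanks to the geometric decay $q^{-k\alpha}$ produced by $\alpha > 0$.

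First I would dispose of the trivial case $x = 0$ (where $F \equiv 0$), and otherwise set $m \in \Z$ so that $\norm{x}_{\K^n} = q^m$. Since $\norm{y}_{\K^n}^{\alpha+n} = q^{k(\alpha+n)}$ on the sphere $\{\norm{y}_{\K^n}=q^k\}$, the $k$-th term of the improper sum is
\begin{equation*}
I_k \ov{\mathrm{def}}{=} \int_{\norm{y}_{\K^n}=q^k} F(y) \d y
= q^{-k(\alpha+n)}\left(\int_{\norm{y}_{\K^n}=q^k} \d y - \int_{\norm{y}_{\K^n}=q^k}\chi(x\cdot y) \d y\right).
\end{equation*}
Applying \eqref{int-sphere} to the first integral and \eqref{equ-proof-prop-norm-alpha-negative-definite} (with $-k$ in place of $k$) to the second gives the following trichotomy:
\begin{itemize}
\item if $k \leq -m$, i.e.~$\norm{x}_{\K^n}\leq q^{-k}$, then $\int_{\norm{y}_{\K^n}=q^k}\chi(x\cdot y) \d y = q^{kn}(1-q^{-n})$, which coincides with $\int_{\norm{y}_{\K^n}=q^k} \d y$, so $I_k = 0$;
\item if $k = -m+1$, i.e.~$\norm{x}_{\K^n}=q^{-k+1}$, then $\int_{\norm{y}_{\K^n}=q^k}\chi(x\cdot y) \d y = -q^{n(k-1)}$, and a direct computation yields $I_k = q^{-k\alpha}$;
\item if $k \geq -m+2$, then $\int_{\norm{y}_{\K^n}=q^k}\chi(x\cdot y) \d y = 0$, so $I_k = (1-q^{-n})q^{-k\alpha}$.
\end{itemize}

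Consequently $I_k$ vanishes identically for $k \leq -m$, and for $k \geq -m+1$ we have $|I_k| \leq q^{-k\alpha}$. Since $\alpha > 0$, the geometric series $\sum_{k \geq -m+1} q^{-k\alpha}$ converges, which proves the absolute convergence of $\sum_{k \in \Z} I_k$ and therefore the existence of the improper integral in the sense of \eqref{improper}.

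The ``difficulty'' here is essentially bookkeeping: keeping the three ranges in \eqref{equ-proof-prop-norm-alpha-negative-definite} consistent after the change $k \mapsto -k$, and verifying that the algebraic cancellation on the small-radius spheres is exact (this is where the hypothesis $\alpha > 0$ is \emph{not} used — only the finite decay at large $k$ relies on it). No deeper tool is required; the lemma is a computation used to justify the definition of the Taibleson operator in \eqref{def-Vlad-Taibleson} via improper integration.
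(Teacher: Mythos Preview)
Your proof is correct and follows essentially the same approach as the paper: decompose into spheres, apply \eqref{int-sphere} and \eqref{equ-proof-prop-norm-alpha-negative-definite}, and observe that the resulting geometric tail in $q^{-k\alpha}$ converges. Your bookkeeping is in fact slightly cleaner than the paper's, which passes through a formally divergent intermediate sum $\sum_{k \in \Z} q^{k\alpha}(1-q^{-n})$ before cancellation, whereas you handle the three ranges of $k$ separately from the outset and make the vanishing for small spheres explicit.
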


\begin{proof}
According to \eqref{equ-proof-prop-norm-alpha-negative-definite} and to \eqref{int-sphere} we have
\begin{align*}
\MoveEqLeft
\sum_{k \in \Z} \int_{\norm{y}_{\K^n} = q^{-k} } F(y) \d y 
=\sum_{k \in \Z} \int_{\norm{y}_{\K^n} = q^{-k} } \frac{1 - \chi(x \cdot y)}{\norm{y}_{\mathbb K^n}^{\alpha + n}} \d y \\
&=\sum_{k \in \Z} \bigg(\int_{\norm{y}_{\K^n} = q^{-k} } \frac{1}{\norm{y}_{\mathbb K^n}^{\alpha + n}}\d y- \int_{\norm{y}_{\K^n} = q^{-k} }\frac{\chi(x \cdot y)}{\norm{y}_{\mathbb K^n}^{\alpha + n}} \d y \bigg) \\
&=\sum_{k \in \Z} \bigg(q^{k(\alpha + n)}\int_{\norm{y}_{\K^n} = q^{-k} } \d y - q^{k(\alpha + n)}\int_{\norm{y}_{\K^n} = q^{-k} } \chi(x \cdot y) \d y \bigg)\\
&\ov{\eqref{int-sphere}\eqref{equ-proof-prop-norm-alpha-negative-definite}}{=} \sum_{k \in \Z} q^{k(\alpha + n)}\big(q^{-kn}(1-q^{-n})\big) - \sum_{\norm{x}_{\K^n} \leq q^{k}} q^{k(\alpha + n)}\big(q^{-kn}(1-q^{-n})\big) \\
&+ \norm{x}_{\K^n}^{-n}\norm{x}_{\K^n}^{\alpha + n} q^{-(\alpha+n)} \\
&=\sum_{\norm{x}_{\mathbb K^n} \geq q^{k+1}} q^{k\alpha}(1-q^{-n})+\norm{x}_{\K^n}^{\alpha} q^{-\alpha-n}.
\end{align*}
Again, this series is absolutely convergent since we have $\norm{x}_{\K^n}=q^{n_0}$ for some integer $n_0 \in \Z$.
\end{proof}

\paragraph{Gelfand-Graev Gamma function}
Let $n \geq 1$ be an integer. We will use an $n$-dimensional generalization $\Gamma_q^{(n)}$ of the Gelfand-Graev Gamma function $\Gamma_q$.  This function is introduced in \cite[p.~128]{Tai75} and  defined by
\begin{equation}
\label{Gamma-formula-n}
\Gamma_q^{(n)}(z)
\ov{\mathrm{def}}{=} \frac{1-q^{z-n}}{1-q^{-z}}, \quad z \in \mathbb{C}-\tfrac{2 \pi \i}{\ln(q)} \mathbb{Z}.
\end{equation}
The function $\Gamma_q^{(n)}$ is meromorphic. Its set of zeros\footnote{\thefootnote. It is stated in \cite[p.~128]{Tai75} that the function $\Gamma_q^{(n)}$ has a unique zero and a unique pole,  which is false.}  (all simple) consists of $n + \frac{2\pi \i}{\ln(q)}\mathbb{Z}$ and its set of poles (all simple) is given by $z \in \frac{2 \pi \i}{\ln(q)} \mathbb{Z}$. If $z$ is a complex number such that $\Re z >0$ and if $u \in \mathbb{K}^n$ satisfies $|u|=1$, it is shown in \cite[Theorem 4.2 p.~128]{Tai75} that
\begin{equation}
\label{Gamma-function-n}
\Gamma_q^{(n)}(z)
=\int_{\mathbb{K}^n} \norm{x}_{\mathbb K^n}^{z-n} \ovl{\chi}(u \cdot x) \d x,
\end{equation}
where the integral is understood in the improper sense. Recall that by \cite[Remark p.~128]{Tai75} we have
\begin{equation}
\label{prod-Gamma}
\Gamma_q^{(n)}(z)\Gamma_q^{(n)}(n-z)
=1.
\end{equation}
If $n=1$, we recover the Gelfand-Graev Gamma function $\Gamma_q$ which admits the simple formula
\begin{equation*}
\label{Gamma-formula}
\Gamma_q(z)
=\frac{1-q^{z-1}}{1-q^{-z}}, \quad z \in \mathbb{C}-\tfrac{2 \pi \i}{\ln(q)} \mathbb{Z}.
\end{equation*}
If $z$ is a complex number such that $\Re z >0$, formula \eqref{Gamma-function-n} gives in particular
\begin{equation*}
\label{Gamma-function}
\Gamma_q(z)
\ov{\mathrm{def}}{=} \int_{\mathbb{K}} |x|^{z-1} \ovl{\chi}(x) \d x, 
\end{equation*}
which is stated in \cite[(a) p.~49]{Tai75}. We also refer to \cite[p.~16]{BrF93} for more information on this function.

\paragraph{L\'evy-Khinchin formula} Recall that a function $\psi \co G \to \mathbb{C}$ on a locally compact abelian group $G$ with neutral element $e$ is said to be conditionally negative definite \cite[Proposition 7.5 p.~40]{BeF75}  \cite[Problem 2 p.~12]{Dieu78} (see also \cite[Definition C.4.17 p.~357]{BHV08}) if $\psi(e) \geq 0$, $\ovl{\psi(s)}=\psi(s^{-1})$ for any $s \in G$ and if any integer $n \geq 1$, any elements $\gamma_1,\ldots,\gamma_n \in G$ and any complex numbers $\xi_1,\ldots,\xi_n \in \mathbb{C}$ the condition $\sum_{j=1}^n \xi_j=0$ implies that
$$
\sum_{i,j=1} \psi(s_j^{-1} s_k) \ovl{\xi_j}\xi_k
\geq 0.
$$
This notion plays a central role in harmonic analysis and probability theory, since conditionally negative definite functions are precisely those that generate convolution semigroups of positive bounded measures, as proved in \cite[Theorem 8.3, p.~49]{BeF75}.

A character $\chi$ of a locally compact abelian group $G$ is a positive definite function by \cite[3.5 p.~13]{BeF75}. Note in particular that this implies by \cite[Corollary 7.7 p.~41]{BeF75} and \cite[Proposition 7.4 (ii) p.~40]{BeF75} that the function $G \to \R$, $s \mapsto 1-\Re \chi(s)$ is conditionally negative definite. These elementary examples will serve as building blocks in what follows.

Let $n \geq 1$ and $0 < \alpha <2$. Recall the well-known classical formula \cite[p.~184]{BeF75}:
\begin{equation}
\label{}
\norm{x}_{\R^n}^\alpha
=\frac{\alpha2^{\alpha-1}\Gamma(\frac{\alpha+n}{2})}{\pi^{\frac{n}{2}}\Gamma(1-\frac{\alpha}{2})} \int_{\R^n-\{0\}} \frac{1-\cos \la x,y \ra}{\norm{y}_{\R^n}^{\alpha+n}} \d y, \quad x \in \R^n,
\end{equation}
where $\norm{x}_{\R^n} \ov{\mathrm{def}}{=} (\sum_{i=1}^{n} x_i^2)^{\frac{1}{2}}$. This is a L\'evy-Khinchin representation of the conditionally negative definite function $\norm{\cdot}_{\R^n}^\alpha$ as a <<continuous sum>> of elementary conditionally negative definite functions, which is a particular case of \cite[Corollary 18.20 p.~184]{BeF75}. Replacing $\R$ by a non-Archimedean local field $\K$, we will establish in Proposition \ref{prop-norm-alpha-negative-definite} the folklore analogue of this formula for $0 < \alpha <\infty$ instead of $0 < \alpha < 2$ with sharp contrast, for the sake of completeness.


According to \cite[p.~129]{Tai75}, if $\Re \alpha >0$ the function $\norm{\cdot}_{\K^n}^{\alpha-n}$ determines a distribution on $\K^n$ acting as
$$
\big\la \norm{\cdot}_{\K^n}^{\alpha-n}, f \big\ra
=\int_{\K^n} \norm{x}_{\K^n}^{\alpha-n}f(x) \d x, \quad f \in \scr{D}(\K^n).
$$ 
Actually, the map $\alpha \mapsto \norm{\cdot}_{\K^n}^{\alpha-n}$ extends to a meromorphic distribution-valued function on the subset $\mathbb{C}-(n+\frac{2\pi \i}{\ln q}\Z)$ given by
\begin{equation*}
\big\la \norm{\cdot}_{\K^n}^{\alpha-n} ,f \big\ra 
\ov{\mathrm{def}}{=} \frac{1-q^{-n}}{1-q^{-\alpha}}f(0)+\int_{\norm{x} > 1} \norm{x}^{\alpha -n} f(x) \d x
+\int_{\norm{x} \leq 1}\norm{x}^{\alpha -n} [f(x)-f(0)] \d x. 
\end{equation*}
If $\Re \alpha>0$ and if $f \in \scr{D}(\K^n)$, this formula can be rewritten (see \cite[Exercise 9.6 p.~34]{Zun22b} \cite[p.~27]{Zun16}) under the form
\begin{equation}
\label{action-k-alpha}
\big\la \norm{\cdot}_{\K^n}^{-\alpha-n}, f \big\ra
=\int_{\K^n-\{0\}} \big[f(x)-f(0)\big] \frac{\d x}{\norm{x}_{\K^n}^{\alpha+n}}.
\end{equation}
The Fourier transform of the distribution $\norm{\cdot}_{\K^n}^{\alpha-n}$ is known. Actually, by \cite[Theorem 2 p.~194]{Tai68}, \cite[Theorem 4.5 p.~130]{Tai75}, \cite[(4.3) p.~136]{VVZ94},  we have in $\scr{D}'(\K^n)$
\begin{equation}
\label{Fourier-transform-kalpha}
\cal{F}\big(\norm{\cdot}_{\K^n}^{\alpha-n}\big)
=\Gamma_q^{(n)}(\alpha)\norm{\cdot}_{\K^n}^{-\alpha}, \quad\alpha \in \mathbb{C}-(n+\tfrac{2\pi \i}{\ln q}\Z).
\end{equation}

Now, we can prove the following result.
\begin{prop}
\label{prop-norm-alpha-negative-definite}
Let $\K$ be a non-Archimedean local field and consider some integer $n \geq 1$. For any $0 < \alpha <\infty$, we have
\begin{equation}
\label{Levy-formula-p-adc}
\norm{x}_{\K^n}^\alpha
=\frac{-1}{\Gamma_q^{(n)}(-\alpha)} \int_{\K^n-\{0\}} \frac{1-\chi(x \cdot y) }{\norm{y}_{\K^n}^{\alpha+n}} \d y, \quad x \in \K^n,
\end{equation} 
In particular, the function $\K^n \to \R$, $x \mapsto \norm{x}_{\K^n}^\alpha$ is a conditionally negative definite function on the additive group of $\K^n$. 
\end{prop}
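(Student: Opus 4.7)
The plan is to evaluate the improper integral on the right-hand side of \eqref{Levy-formula-p-adc} sphere-by-sphere, using the explicit sphere integrals \eqref{int-sphere} and \eqref{equ-proof-prop-norm-alpha-negative-definite}. The case $x = 0$ is immediate, so I assume $x \neq 0$ and write $\norm{x}_{\K^n} = q^{n_0}$ for some $n_0 \in \Z$. With $S_k \ov{\mathrm{def}}{=} \{y \in \K^n : \norm{y}_{\K^n} = q^{-k}\}$, the density $\norm{y}_{\K^n}^{-\alpha-n}$ is constant equal to $q^{k(\alpha+n)}$ on $S_k$, so the $k$-th sphere contributes
\[
q^{k(\alpha+n)}\Big[\int_{S_k} \d y - \int_{S_k} \chi(x \cdot y)\d y\Big]
\]
to the improper integral, whose absolute convergence is already established in Lemma \ref{lem-integrals-over-spheres}.

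The trichotomy in \eqref{equ-proof-prop-norm-alpha-negative-definite} splits into three regimes according to the position of $k$ relative to $n_0$. For $k \geq n_0$ one has $\norm{x}_{\K^n} \leq q^k$, the two sphere integrals agree, and the $k$-th contribution vanishes. For $k = n_0 - 1$ the second case yields $\int_{S_k}\chi(x\cdot y)\d y = -q^{-n(k+1)}$, and combining with \eqref{int-sphere} the bracket simplifies to $q^{-kn}$, producing a contribution $q^{(n_0-1)\alpha}$. For $k \leq n_0 - 2$ the $\chi$-integral vanishes, so the contribution is $q^{k\alpha}(1 - q^{-n})$. Summing the geometric series over $k \leq n_0 - 2$ and adding the isolated $k = n_0 - 1$ term gives, after multiplying numerator and denominator by $q^\alpha$,
\[
\int_{\K^n - \{0\}}\frac{1 - \chi(x\cdot y)}{\norm{y}_{\K^n}^{\alpha+n}}\d y = \norm{x}_{\K^n}^\alpha \cdot \frac{1 - q^{-\alpha - n}}{q^\alpha - 1},
\]
and comparing with \eqref{Gamma-formula-n} at $z = -\alpha$ the last factor is precisely $-\Gamma_q^{(n)}(-\alpha)$, which is finite and nonzero for $\alpha > 0$; dividing yields \eqref{Levy-formula-p-adc}.

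For the second assertion, $-\Gamma_q^{(n)}(-\alpha) = \frac{1 - q^{-\alpha - n}}{q^\alpha - 1} > 0$ for $\alpha > 0$. The change of variables $y \mapsto -y$ preserves both the domain and the density and sends $\chi(x \cdot y)$ to $\overline{\chi(x \cdot y)}$, so the improper integral in \eqref{Levy-formula-p-adc} coincides with the same expression with $\chi(x \cdot y)$ replaced by $\Re \chi(x \cdot y)$. Hence \eqref{Levy-formula-p-adc} exhibits $x \mapsto \norm{x}_{\K^n}^\alpha$ as an improper integral with strictly positive weight of the real kernels $x \mapsto 1 - \Re \chi(x \cdot y)$; each such kernel is conditionally negative definite on $(\K^n,+)$, since $x \mapsto \chi(x \cdot y)$ is a character of $(\K^n,+)$, hence positive definite, by the same reasoning as in the footnote attached to the archimedean identity preceding \eqref{Levy-formula-p-adc}. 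This property passes to the integral as a pointwise limit of finite positive combinations of conditionally negative definite functions. The only real care needed is the bookkeeping at $k = n_0 - 1$, where the cancellation combining the term $-q^{-n(k+1)}$ from \eqref{equ-proof-prop-norm-alpha-negative-definite} with the sphere measure produces the clean contribution $q^{(n_0-1)\alpha}$; no serious obstacle appears.
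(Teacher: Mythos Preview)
Your proof is correct. The sphere-by-sphere computation is clean, the bookkeeping at $k = n_0 - 1$ is right, and the geometric series sums to the claimed factor $\frac{1 - q^{-\alpha-n}}{q^\alpha - 1} = -\Gamma_q^{(n)}(-\alpha)$.

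Your route differs from the paper's. The paper proceeds distributionally: it pairs $\norm{\cdot}_{\K^n}^\alpha$ against test functions $f \in \scr{D}(\K^n)$, invokes the Fourier transform identity \eqref{Fourier-transform-kalpha} (with $-\alpha$ in place of $\alpha$) together with the functional equation \eqref{prod-Gamma} to pull out the constant $1/\Gamma_q^{(n)}(-\alpha)$, expands $\check{f}$ via \eqref{inverse-Fourier-transform}, and then applies Fubini twice to move the integral over $\K^n - \{0\}$ outside. Your argument bypasses all of this by computing the improper integral directly from \eqref{int-sphere} and \eqref{equ-proof-prop-norm-alpha-negative-definite}; in effect you are carrying the computation begun in Lemma \ref{lem-integrals-over-spheres} to its closed form. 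What you gain is elementarity and brevity: no distribution theory, no Fubini justifications. What the paper's approach buys is a conceptual explanation of \emph{why} the constant is $-1/\Gamma_q^{(n)}(-\alpha)$ --- it emerges from the Fourier pairing \eqref{Fourier-transform-kalpha} rather than from summing a series --- and it illustrates that \eqref{Levy-formula-p-adc} is really the inverse Fourier transform of $\norm{\cdot}^{-\alpha-n}$ acting on the character, which situates the formula within Taibleson's framework. Your treatment of the conditional negative definiteness via $y \mapsto -y$ and the real part is the same as the paper's Remark \ref{rem-imaginary-part-character}.
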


\begin{proof}
For any function $f \in \scr{D}(\K^n)$, using \eqref{Fourier-transform-kalpha} with $-\alpha$ instead of $\alpha$, we have
\begin{align*}
\MoveEqLeft
\big\la \norm{\cdot}_{\K^n}^\alpha,f \big\ra         
\ov{\eqref{Fourier-transform-kalpha}}{=} \Gamma_q^{(n)}(\alpha+n) \big\la \cal{F}^{-1}\big(\norm{\cdot}_{\K^n}^{-\alpha-n}\big), f \big\ra 
= \Gamma_q^{(n)}(\alpha+n) \big\la \norm{\cdot}_{\K^n}^{-\alpha-n}, \check{f} \big\ra \\
&\ov{\eqref{prod-Gamma}\eqref{action-k-alpha}}{=} \frac{1}{\Gamma_q^{(n)}(-\alpha)} \int_{\K^n - \{ 0 \}} \big[\check{f}(x)-\check{f}(0)\big] \frac{\d x}{\norm{x}_{\K^n}^{\alpha+n}} \\
&\ov{\eqref{inverse-Fourier-transform}}{=} \frac{1}{\Gamma_q^{(n)}(-\alpha)}  \int_{\K^n - \{ 0 \}} \bigg(\int_{\K^n} (\chi(x \cdot y)-1) f(y) \d y \bigg) \frac{\d x}{\norm{x}_{\K^n}^{\alpha+n}} \\
&=\frac{1}{\Gamma_q^{(n)}(-\alpha)} \sum_{k \in \Z} \int_{\norm{x}_{\K^n} = q^{-k}} \bigg(\int_{\K^n} (\chi(x \cdot y)-1) f(y) \d y\bigg) \frac{\d x}{\norm{x}_{\K^n}^{\alpha+n}}.
\end{align*}
Now, we aim to apply Fubini's theorem to interchange the order of integration. To do so, we must ensure that the double integral  
\begin{equation}
\label{rree-88}
\int_{\norm{x}_{\K^n} = q^{-k}}
\bigg(\int_{\K^n}
\bigl|\chi(x \cdot y) - 1\bigr|
|f(y)|
\frac{\d x}{\norm{x}_{\mathbb{K}^n}^{\alpha+n}}\bigg) \d y
\end{equation}
is finite. There exists an integer $N \geq 1$ such that $f(y)=0$ if $\norm{y}_{\K^n} \geq q^N$ and a positive constant $M$ such that $|f(y)| \leq M$ for any $y \in \K^n$. We get the inequality
\[
\bigl|\chi(x \cdot y) - 1\bigr|\,
|f(y)|\,
\frac{1}{\norm{x}_{\mathbb{K}^n}^{\alpha+n}}
\leq
\frac{2M}{\norm{x}_{\mathbb{K}^n}^{\alpha+n}},
\]
Putting these points together, we see that the previous integral \eqref{rree-88} is less than
\begin{equation*}
\label{}
2M \bigg(\int_{\norm{y}_{\K^n} \leq q^N}\d y\bigg) 
\bigg(\int_{\norm{x}_{\K^n} = q^{-k}} \frac{\d x}{\norm{x}_{\mathbb{K}^n}^{\alpha+n}}
\bigg)< \infty. 
\end{equation*}
Consequently, using Fubini's theorem, we can switch the integrals over $y$ and $x$ and get
\[
\big\la \norm{\cdot}_{\K^n}^\alpha,f \big\ra
= \frac{1}{\Gamma_q^{(n)}(-\alpha)} \sum_{k \in \Z} \int_{\mathbb{K}^n} \bigg(\int_{\norm{x}_{\mathbb K^n} = q^{-k}} (\chi(x \cdot y)-1) \frac{\d x}{\norm{x}_{\K^n}^{\alpha+n}}\bigg)  f(y) \d y 
\]
Using again the fact that $f$ is bounded and has compact support and the series over the integrals in $x$ converges absolutely according to Lemma \ref{lem-integrals-over-spheres}, we 
can again apply Fubini's theorem, this time to the series in $k$ and the integral over $y$ and obtain
\[ 
\big\la \norm{\cdot}_{\K^n}^\alpha,f \big\ra=\frac{1}{\Gamma_q^{(n)}(-\alpha)} \int_{\K^n } f(y) \bigg(\int_{\K^n- \{ 0 \}} \frac{\chi(x \cdot y)-1}{\norm{x}_{\K^n}^{\alpha+n}} \d x\bigg)\d y.
\]
\end{proof}


\begin{remark}\normalfont
\label{rem-imaginary-part-character}
Note that the formula \eqref{Levy-formula-p-adc} remains true if we replace $\chi(x \cdot y)$ by $\Re \chi(x \cdot y)$ since the improper integral $\int_{\K^n - \{ 0 \}} \frac{\Im \chi(x \cdot y)}{\norm{y}_{\K^n}^{\alpha + n}} \d y$ vanishes. Indeed, since $\chi$ is a character, we have $\chi(-x \cdot y) = \overline{\chi(x\cdot y)}$ for any $x,y \in \mathbb K^n$. We infer that $\Im \chi(-x \cdot y) = \Im \overline{\chi(x \cdot y)} = - \Im \chi (x \cdot y)$. Using the unimodularity of the \textit{abelian }additive group $\K^n$ in the first equality, we deduce that
\begin{align*}
\MoveEqLeft
\int_{\K^n - \{ 0 \}} \frac{\Im \chi(x \cdot y)}{\norm{y}_{\K^n}^{\alpha + n}} \d y 
= \int_{\K^n - \{ 0 \}} \frac{\Im \chi(-x \cdot y)}{\norm{-y}_{\K^n}^{\alpha + n}} \d y 
= -\int_{\K^n - \{ 0 \}} \frac{\Im \chi(x \cdot y)}{\norm{y}_{\K^n}^{\alpha + n}} \d y.
\end{align*}
Here, we make a slight abuse of notation since the integrals are improper.
\end{remark}

\begin{remark} \normalfont
If $n=1$ and $\K=\mathbb{Q}_q$, we recover the formula 
$$
|x|_q^\alpha
=\frac{q^\alpha-1}{1-q^{-1-\alpha}} \int_{\mathbb{Q}_q-\{0\}} \frac{1-\Re \chi(x y)}{|y|_q^{\alpha+1}} \d y, \quad x \in \mathbb{Q}_q.
$$
of the papers \cite[p.~699]{Har90} and \cite[p.~908]{Hara2}. By \eqref{character}, we have $\Re \chi(x y)=\cos 2\pi \{x y\}_{q}$. 
\end{remark}

As a consequence of Proposition \ref{prop-norm-alpha-negative-definite}, by \cite[Theorem 8.3 p.~49 and pp.~51]{BeF75}, there exists a convolution semigroup $(\mu_t)_{t \geq 0}$ of symmetric probability measures on $\K^n$ such that 
$$
\hat{\mu}_t
=\e^{-t\norm{\cdot}_{\K^n}^\alpha}, \quad t \geq 0.
$$
Note that the function $\xi \mapsto \e^{-t\norm{\xi}_{\K^n}^\alpha}$ belongs to the space $\L^1(\K^n)$ for any $t > 0$. Indeed, we have
\begin{align}
\MoveEqLeft
\label{fin-567}
\int_{\K^n} \e^{-t\norm{\xi}_{\K^n}^\alpha}   \d \xi      
\leq \int_{\norm{\xi}_{\K^n} \leq 1} \e^{- t \norm{\xi}_{\K^n}^\alpha} \d \xi + \int_{\norm{\xi}_{\K^n} > 1} \e^{-t \norm{\xi}_{\K^n}^\alpha} \d \xi \\
&\leq \int_{\norm{\xi} \leq 1} \d \xi + \sum_{k=1}^{\infty} \int_{\norm{\xi}_{\K^n}=q^k} \e^{-t \norm{\xi}_{\K^n}^\alpha} \d \xi \nonumber\\
&= 1 + \sum_{k=1}^{\infty} \e^{-t q^{k\alpha}} \int_{\norm{\xi}_{\K^n}=q^{k}} \d \xi \nonumber\\
&\ov{\eqref{int-sphere}}{=} 1 + \sum_{k=1}^\infty \e^{-t q^{k\alpha}} q^{kn}(1-q^{-n})
< \infty. \nonumber
\end{align}
We deduce by \cite[Theorem 2.6 p.~10]{BeF75} that the measure $\mu_t$ admits a continuous density $K_t$ with respect to the Haar measure $\mu$ and that
$$
K_t(x)
\ov{\mathrm{def}}{=} \int_{\K^n} \e^{-t\norm{\xi}_{\K^n}^\alpha} \ovl{\chi(x \cdot \xi)} \d \xi, \quad t > 0,\: x \in \K^n.
$$
So we have a weak* continuous semigroup $(T_t)_{t \geq 0}$ of selfadjoint contractive positive Fourier multipliers on the algebra $\L^\infty(\K^n)$. If $t>0$, each map $T_t \co \L^\infty(\K^n) \to \L^\infty(\K^n)$ is the convolution operator by the kernel $K_t$. More generally, we need the kernel for complex times
\begin{equation}
\label{Def-kz}
K_z(x)
\ov{\mathrm{def}}{=} \int_{\K^n} \e^{-z\norm{\xi}_{\K^n}^\alpha} \ovl{\chi(x \cdot \xi)} \d \xi, \quad \Re z > 0,\: x \in \K^n.
\end{equation}
The previous integral converges absolutely since
\begin{align*}
\MoveEqLeft
\int_{\K^n} \left|\e^{-z\norm{\xi}_{\K^n}^\alpha} \ovl{\chi(x \cdot \xi)}\right|  \d \xi       
\leq \int_{\K^n} \e^{- \Re z \norm{\xi}_{\K^n}^\alpha} \d \xi
\ov{\eqref{fin-567}}{<} \infty.
\end{align*}
Note that the generator of the Markovian semigroup $(T_t)_{t \geq 0}$ is the operator $-D^\alpha$, where $D^\alpha$ is the Taibleson operator, defined in \eqref{Taibleson-Fourier}.

%

The following is a slight extension of \cite[Lemma 2 p.~331]{RoZ08} (see also \cite[(4.5) p.~147]{Koc01}). 
Again, we refer to \cite{Ber52}, \cite[Lemma 1 p.~583]{Fel71} and \cite[Theorem 4.1]{Jan22} for the analogous result for $\R^n$.

\begin{prop}
\label{prop-noyau-en-serie}
Let $\alpha > 0$. For any complex number $z \in \mathbb{C}$ with $\Re z > 0$ and any $x \in \K^n-\{0\}$, we have
\begin{equation}
\label{Noyau-en-serie}
K_z(x)
= \sum_{k=1}^{\infty} \frac{(-z)^k}{k!} \frac{\Gamma_q^{(n)}(k\alpha + n)}{\norm{x}_{\K^n}^{k\alpha+n}}.
\end{equation}
\end{prop}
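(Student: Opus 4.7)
My strategy is to expand the exponential in the defining integral \eqref{Def-kz} as a Taylor series in $\norm{\xi}_{\K^n}^\alpha$, integrate each term against $\ovl{\chi(x\cdot\xi)}$, and identify it with a scaled Gelfand-Graev value via \eqref{Gamma-function-n}. The $k=0$ term will vanish automatically because $\Gamma_q^{(n)}(n)=0$ by \eqref{Gamma-formula-n}, which explains why the sum in \eqref{Noyau-en-serie} begins at $k=1$. The main technical obstacle is justifying the interchange of the series with the integral, since the termwise integrals $\int_{\K^n}\norm{\xi}_{\K^n}^{k\alpha}\ovl{\chi(x\cdot\xi)}\d\xi$ exist only in the improper sense of \eqref{improper} and not as absolutely convergent integrals in $\xi$.

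\textbf{Identification of the individual terms.} Fix $k\geq 0$. Writing $\norm{x}_{\K^n}=q^m$ for some $m\in\Z$ and choosing a uniformizer $\beta\in\K$ (so $|\beta|=q^{-1}$), the element $u\ov{\mathrm{def}}{=}\beta^m x$ satisfies $\norm{u}_{\K^n}=1$. The change of variable $\xi=\beta^m\eta$ has Jacobian $q^{-mn}$ and satisfies $\norm{\xi}_{\K^n}=q^{-m}\norm{\eta}_{\K^n}$ together with $x\cdot\xi=u\cdot\eta$. It therefore transforms the integral (in the sense of \eqref{improper}) as
\[
\int_{\K^n}\norm{\xi}_{\K^n}^{k\alpha}\,\ovl{\chi(x\cdot\xi)}\,\d\xi
=\frac{1}{\norm{x}_{\K^n}^{k\alpha+n}}\int_{\K^n}\norm{\eta}_{\K^n}^{(k\alpha+n)-n}\,\ovl{\chi(u\cdot\eta)}\,\d\eta
=\frac{\Gamma_q^{(n)}(k\alpha+n)}{\norm{x}_{\K^n}^{k\alpha+n}},
\]
where the last equality is \eqref{Gamma-function-n} applied with $z=k\alpha+n>0$.

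\textbf{Interchange and conclusion.} I decompose $K_z(x)$ using \eqref{improper} as $K_z(x)=\sum_{j\in\Z}\e^{-zq^{j\alpha}}I_j(x)$ with $I_j(x)\ov{\mathrm{def}}{=}\int_{\norm{\xi}_{\K^n}=q^j}\ovl{\chi(x\cdot\xi)}\,\d\xi$. Applying \eqref{equ-proof-prop-norm-alpha-negative-definite} (with $-x$ in place of $x$, using $\ovl{\chi(x\cdot\xi)}=\chi((-x)\cdot\xi)$) gives $I_j(x)=0$ for $j\geq 2-m$, $I_{1-m}(x)=-q^{-mn}$, and $I_j(x)=q^{jn}(1-q^{-n})$ for $j\leq-m$. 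Hence
\[
\sum_{j\in\Z}|I_j(x)|\,\e^{|z|q^{j\alpha}}\leq q^{-mn}\e^{|z|q^{(1-m)\alpha}}+\e^{|z|q^{-m\alpha}}\sum_{j\leq-m}q^{jn}(1-q^{-n})<\infty,
\]
and Fubini for series permits substituting $\e^{-zq^{j\alpha}}=\sum_{k\geq 0}\frac{(-z)^k q^{jk\alpha}}{k!}$ and swapping the summations, yielding
\[
K_z(x)=\sum_{k=0}^{\infty}\frac{(-z)^k}{k!}\sum_{j\in\Z}q^{jk\alpha}I_j(x)=\sum_{k=0}^{\infty}\frac{(-z)^k}{k!}\int_{\K^n}\norm{\xi}_{\K^n}^{k\alpha}\,\ovl{\chi(x\cdot\xi)}\,\d\xi.
\]
Plugging in the identity from the previous step and noting that $\Gamma_q^{(n)}(n)=0$ by \eqref{Gamma-formula-n} kills the $k=0$ term, yielding \eqref{Noyau-en-serie}.
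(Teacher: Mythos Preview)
Your proof is correct and follows essentially the same route as the paper: both decompose $K_z(x)$ over crowns via \eqref{equ-proof-prop-norm-alpha-negative-definite}, expand the exponential as a Taylor series, and justify the interchange by the absolute convergence $\sum_{j}|I_j(x)|\,\e^{|z|q^{j\alpha}}<\infty$. The only organizational difference is that the paper computes the inner sum explicitly (summing a geometric series and then recognizing \eqref{Gamma-formula-n}), whereas you shortcut this by the scaling substitution $\xi=\beta^m\eta$ and a direct appeal to the integral representation \eqref{Gamma-function-n}; this is a neat touch but not a genuinely different argument.
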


\begin{proof}
Let $x \in \K^n-\{0\}$. We have $\norm{x}_{\K^n}=q^{n_0}$ for some integer $n_0 \in \Z$. We expand as usual the integral over $\K^n$ into crowns and we get
\begin{align*}
\MoveEqLeft
K_z(x) 
\ov{\eqref{Def-kz}}{=} \int_{\K^n} \e^{-z \norm{\xi}_{\K^n}^\alpha} \ovl{\chi(x \cdot \xi)} \d \xi 
= \sum_{k \in \Z} \int_{\norm{\xi}_{\K^n} =q^{-k}} \e^{-z q^{-k\alpha}} \ovl{\chi(x \cdot \xi)} \d \xi \\
&= \sum_{k \in \Z} \e^{-z q^{-k\alpha}} \int_{\norm{\xi}_{\K^n} =q^{-k}} \ovl{\chi(x \cdot \xi)} \d \xi \\
&\ov{\eqref{equ-proof-prop-norm-alpha-negative-definite}}{=} (1-q^{-n})\bigg(\sum_{\norm{x}_{\K^n} \leq q^k} \e^{-z q^{-k\alpha}}q^{-kn}\bigg) -\e^{-z \frac{q^\alpha}{\norm{x}_{\K^n}^\alpha}} \norm{x}_{\mathbb K^n}^{-n} \\
&=(1-q^{-n})\bigg(\sum_{k \geq n_0} \e^{-z q^{-k\alpha}}q^{-kn}\bigg) -\e^{-z \frac{q^\alpha}{\norm{x}_{\K^n}^\alpha}} \norm{x}_{\mathbb K^n}^{-n} \\
&=(1-q^{-n})\bigg(\sum_{k \geq 0} \e^{-z q^{-k\alpha}\norm{x}_{\mathbb K^n}^{-\alpha}}q^{-kn}\norm{x}_{\K^n}^{-n}\bigg) -\e^{-z \frac{q^\alpha}{\norm{x}_{\K^n}^\alpha}} \norm{x}_{\mathbb K^n}^{-n}, 
\end{align*}
using a change of index in the last equality. We expand in the series over $k$ the exponential function into a series over $l$ and we obtain
\[
K_z(x) 
=\frac{1 - q^{-n}}{\norm{x}_{\K^n}^n}\bigg(\sum_{k \geq 0} \sum_{l \geq 0} \frac{(-z)^l}{l!} \left( \frac{q^{-k \alpha}}{\norm{x}_{\K^n}^\alpha} \right)^l q^{-kn} \bigg) - \e^{-z \frac{q^\alpha}{\norm{x}_{\K^n}^\alpha}} \norm{x}_{\K^n}^{-n}
. 
\]
This double series converges absolutely. Indeed, we can bound $\sum_{k \geq 0} \sum_{l \geq 0} |\cdots|$ by
\[ 
\sum_{k \geq 0} \exp\bigg(|z| \frac{q^{-k\alpha}}{\norm{x}_{\K^n}^\alpha}\bigg)  q^{-kn}
< \infty. 
\]
Thus we can interchange the order of summation. Expanding the second exponential in the first equality and summing a geometric series in the third equality, we deduce that
\begin{align*}
\MoveEqLeft
K_z(x) 
= (1-q^{-n})\sum_{l \geq 0} \frac{(-z)^l}{l!} \frac{1}{\norm{x}_{\K^n}^{l\alpha  + n}} \sum_{k \geq 0} q^{-l\alpha k-kn} - \sum_{l \geq 0} \frac{(-z)^l}{l!} \frac{q^{l\alpha}}{\norm{x}_{\K^n}^{l\alpha+n}} \\
&=\sum_{l \geq 0} \frac{(-z)^l}{l!}\frac{1}{\norm{x}_{\K^n}^{l\alpha  + n}}\bigg( (1-q^{-n})  \bigg(\sum_{k \geq 0} q^{-k(l\alpha+n)} \bigg) -  q^{l\alpha} \bigg)\\
&=\sum_{l \geq 0} \frac{(-z)^l}{l!}\frac{1}{\norm{x}_{\K^n}^{l\alpha  + n}}\bigg( \frac{1-q^{-n}}{1-q^{-(l\alpha+n)}} -  q^{l\alpha} \bigg)\\
&=\sum_{l \geq 0} \frac{(-z)^l}{l!}\frac{1}{\norm{x}_{\K^n}^{l\alpha  + n}}\bigg( \frac{1-q^{l\alpha}}{1-q^{-(l\alpha+n)}} \bigg)
\ov{\eqref{Gamma-formula-n}}{=} \sum_{l \geq 0} \frac{(-z)^l}{l!} \frac{1}{\norm{x}_{\K^n}^{l \alpha + n}} \Gamma_q^{(n)}(l \alpha + n).
\end{align*}
Finally, recall that $\Gamma_q^{(n)}(n) \ov{\eqref{Gamma-formula-n}}{=} 0$. So the first term is zero.
\end{proof}

%

The following is a generalization of \cite[Lemma 4.1 p.~147]{Koc01},  which gives for $n=1$ the estimate 
\begin{equation*}
\label{}
K_t(x) 
\lesssim \frac{t}{(t^{\frac{1}{\alpha}}+\norm{x}_{\mathbb{K}})^{\alpha+1}}
\end{equation*}
for any $t > 0$. See also \cite[Lemma 3 p.~332]{RoZ08} for the $n$-dimensional variant
\begin{equation*}
\label{}
K_t(x) 
\lesssim \frac{t}{(t^{\frac{1}{\alpha}}+\norm{x}_{\Q_q^n})^{\alpha+n}}.
\end{equation*}
We also refer to the papers \cite{BGPV14}, \cite{BGHH21} and to the survey \cite{Gri23} for heat kernel estimates in a larger ultrametric context.

\begin{lemma}
\label{Lemma-Estimation-noyau}
Let $\alpha > 0$. For any $x \in \K^n$ and any complex number such that $\Re z >0$, we have
\begin{equation}
\label{First-estimate-kernel}
|K_z(x)|
\lesssim_{q,n} \frac{|z|}{[(\Re z)^{\frac{1}{\alpha}}+\norm{x}_{\K^n}]^{\alpha+n}}.
\end{equation}
\end{lemma}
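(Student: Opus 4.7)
My plan is to split the argument according to whether $\norm{x}_{\K^n}^\alpha$ is dominated by $\Re z$ or exceeds it, after first observing that $[(\Re z)^{1/\alpha}+\norm{x}_{\K^n}]^{\alpha+n}$ is comparable (up to the factor $2^{\alpha+n}$) to $(\Re z)^{(n+\alpha)/\alpha}$ in the first regime and to $\norm{x}_{\K^n}^{n+\alpha}$ in the second. I will treat each regime separately and, in the large-$\norm{x}_{\K^n}$ regime, further split on the size of $|z|$.

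In the regime $\norm{x}_{\K^n}^\alpha \leq \Re z$, since $|z| \geq \Re z$ it is enough to prove $|K_z(x)| \lesssim_{q,n,\alpha} (\Re z)^{-n/\alpha}$. Starting from \eqref{Def-kz} and the triangle inequality, $|K_z(x)| \leq \int_{\K^n} e^{-(\Re z)\norm{\xi}_{\K^n}^\alpha}\d\xi$; I decompose the domain into crowns and use \eqref{int-sphere} to rewrite this as $(1-q^{-n})\sum_{k \in \Z} e^{-(\Re z) q^{-k\alpha}} q^{-kn}$. Picking the integer $k^\ast$ so that $(\Re z) q^{-k^\ast\alpha}$ is of order $1$ and splitting at $k^\ast$, the tail $k \geq k^\ast$ is a geometric series of ratio $q^{-n}$ anchored at $q^{-k^\ast n}\lesssim (\Re z)^{-n/\alpha}$, while the head $k < k^\ast$ decays super-exponentially in $q^{(k^\ast-k)\alpha}$ and contributes the same order.

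In the regime $\norm{x}_{\K^n}^\alpha > \Re z$, the target becomes $|K_z(x)| \lesssim |z|/\norm{x}_{\K^n}^{\alpha+n}$. If additionally $q^\alpha|z| \leq \norm{x}_{\K^n}^\alpha$, I plug the crude bound $|\Gamma_q^{(n)}(k\alpha+n)| \leq q^{k\alpha}/(1-q^{-\alpha-n})$ (immediate from \eqref{Gamma-formula-n}) into the series \eqref{Noyau-en-serie}, reducing $|K_z(x)|$ to a constant times $\norm{x}_{\K^n}^{-n}\sum_{k \geq 1}(q^\alpha|z|/\norm{x}_{\K^n}^\alpha)^k/k!$; the sub-case assumption bounds the argument by $1$, so the first term dominates and delivers the desired estimate. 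In the opposite sub-case $\norm{x}_{\K^n}^\alpha < q^\alpha|z|$, I go back to the direct crown decomposition of \eqref{Def-kz}: applying \eqref{equ-proof-prop-norm-alpha-negative-definite} with $-x$ in place of $x$ (and using $\norm{-x}_{\K^n}=\norm{x}_{\K^n}$) yields the closed form $K_z(x) = (1-q^{-n})\sum_{k \geq k_0} e^{-zq^{-k\alpha}}q^{-kn} - \norm{x}_{\K^n}^{-n} e^{-zq^\alpha/\norm{x}_{\K^n}^\alpha}$ with $q^{k_0}=\norm{x}_{\K^n}$. Every exponential factor has modulus at most $1$, so the triangle inequality produces $|K_z(x)| \leq 2\norm{x}_{\K^n}^{-n}$, which the sub-case assumption converts into $\lesssim q^\alpha|z|/\norm{x}_{\K^n}^{\alpha+n}$.

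The main obstacle is exactly the intermediate band $\Re z < \norm{x}_{\K^n}^\alpha < q^\alpha|z|$: the series \eqref{Noyau-en-serie} is no longer geometric-like there (since the exponential argument is $>1$), while the heat-kernel integral bound $\int e^{-(\Re z)\norm{\xi}_{\K^n}^\alpha}\d\xi \lesssim (\Re z)^{-n/\alpha}$ fails to reach the pointwise target $|z|/\norm{x}_{\K^n}^{\alpha+n}$. The way around this is to exploit the oscillatory cancellation of \eqref{equ-proof-prop-norm-alpha-negative-definite}, which truncates the crown decomposition to indices $k \geq k_0$; summability then relies solely on the geometric factor $q^{-kn}$, producing the uniform $\norm{x}_{\K^n}^{-n}$ estimate that, combined with the hypothesis $\norm{x}_{\K^n}^\alpha < q^\alpha|z|$, closes the argument.
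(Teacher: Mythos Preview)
Your proof is correct. In the small-$\norm{x}$ regime (i.e.\ $\norm{x}_{\K^n}^\alpha \leq \Re z$) your argument coincides with the paper's: both bound $|K_z(x)|$ by $\int_{\K^n} e^{-(\Re z)\norm{\xi}_{\K^n}^\alpha}\d\xi$ and estimate this by $(\Re z)^{-n/\alpha}$ via the crown decomposition.

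In the large-$\norm{x}$ regime you take a genuinely different route. The paper re-expands the series \eqref{Noyau-en-serie} (via a geometric-series manipulation of the $\Gamma_q^{(n)}$ factor) into a telescoping-type sum of differences of exponentials,
\[
K_z(x)=\sum_{l\geq 0} q^{-ln}\norm{x}_{\K^n}^{-n}\Big(\exp(-q^{-l\alpha}\norm{x}_{\K^n}^{-\alpha}z)-\exp(-q^{(1-l)\alpha}\norm{x}_{\K^n}^{-\alpha}z)\Big),
\]
and then applies the mean-value inequality $|e^{-a}-e^{-b}|\leq|a-b|$ (for $\Re a,\Re b\geq 0$) term by term, yielding $|K_z(x)|\lesssim |z|/\norm{x}_{\K^n}^{\alpha+n}$ in a single stroke, valid uniformly over all $z$ with $\Re z>0$. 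You instead introduce a further dichotomy on $q^\alpha|z|$ versus $\norm{x}_{\K^n}^\alpha$: when $|z|$ is small you bound the Taylor series \eqref{Noyau-en-serie} crudely so that the $k=1$ term dominates, and when $|z|$ is large you invoke the truncated crown expansion together with $|e^{-\zeta}|\leq 1$ to get $|K_z(x)|\leq 2\norm{x}_{\K^n}^{-n}$ and then trade $\norm{x}_{\K^n}^{-\alpha}$ for $q^\alpha|z|\norm{x}_{\K^n}^{-2\alpha}$. The paper's argument is more streamlined (one estimate, no sub-cases) and exposes the cancellation more transparently; yours is slightly more elementary, avoiding the telescoping rewrite and the mean-value step, at the cost of the extra case split.
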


\begin{proof}
Let $k \in \Z$ be an integer such that 
\begin{equation}
\label{ine-sans-fin}
q^{k-1} \leq (\Re z)^{\frac{1}{\alpha}} \leq q^k.
\end{equation}
Let $\beta$ be a prime element of $\K$. We have $|\beta| = q^{-1}$ and consequently $q^{k-1}=|\beta|^{1-k}$. Then using the change of variable $\eta = \beta^{1-k}\xi$ in the third equality, we obtain
\begin{align}
\MoveEqLeft
\label{11}
|K_z(x)| 
\ov{\eqref{Def-kz}}{=} \left|\int_{\K^n} \e^{-z\norm{\xi}_{\K^n}^\alpha} \ovl{\chi(x \cdot \xi)}\d \xi \right| 
\leq \int_{\K^n}\left|\e^{-z\norm{\xi}_{\K^n}^\alpha}\ovl{\chi(x \cdot \xi)}\right| \d \xi \\
&\leq \int_{\K^n} \e^{-\Re(z) \norm{\xi}_{\K^n}^\alpha} \d \xi 
\ov{\eqref{ine-sans-fin}}{\leq} \int_{\K^n} \e^{-q^{\alpha(k-1)}\norm{\xi}_{\K^n}^\alpha} \d \xi \nonumber \\ 
&=\int_{\K^n} \e^{-|\beta|^{(1-k)\alpha}\norm{\xi}_{\K^n}^\alpha}  \d \xi 
=\int_{\K^n} \e^{-\norm{\beta^{1-k}\xi}_{\K^n}^\alpha} \d \xi  \nonumber \\
&\ov{\eta = \beta^{1-k}\xi}{=} \frac{1}{q^{(k-1)n}}\int_{\K^n} \e^{-\norm{\eta}_{\K^n}^\alpha} \d \eta \nonumber
=\frac{1}{q^{kn}} \bigg(q^{n} \int_{\K^n} \e^{-\norm{\eta}_{\K^n}^\alpha} \d \eta\bigg) 
\ov{\eqref{ine-sans-fin}}{\lesssim_{q,n}} \frac{1}{(\Re z)^\frac{n}{\alpha}}.
\end{align}
On the other hand, according to \eqref{Noyau-en-serie}, we have for any complex number $z$ with $\Re z>0$,
\begin{align*}
\MoveEqLeft
K_z(x) 
\ov{\eqref{Noyau-en-serie}}{=} \sum_{k=0}^\infty \frac{(-z)^k}{k!} \frac{\Gamma_q^{(n)}(k\alpha + n)}{\norm{x}_{\K^n}^{k \alpha + n}} 
\ov{\eqref{Gamma-formula-n}}{=} \sum_{k=0}^\infty \frac{(-z)^k}{k!} \frac{1-q^{k \alpha}}{1 - q^{-k\alpha - n}}\frac{1}{\norm{x}_{\K^n}^{k \alpha + n}} \\
& = \sum_{k=0}^\infty \frac{(-z)^k}{k!} \frac{1}{1 - q^{-k\alpha - n}}\frac{1}{\norm{x}_{\K^n}^{k \alpha + n}} - \sum_{k=0}^\infty \frac{(-z)^k}{k!} \frac{q^{k\alpha}}{1 - q^{-k\alpha - n}}\frac{1}{\norm{x}_{\K^n}^{k \alpha + n}}\\
& = \sum_{k=0}^\infty \frac{(-z)^k}{k!} \sum_{l=0}^\infty q^{-lk\alpha - ln}\frac{1}{\norm{x}_{\K^n}^{k \alpha + n}} - \sum_{k=0}^\infty \frac{(-z)^k}{k!} \sum_{l=0}^\infty q^{k\alpha} q^{-lk\alpha - ln}\frac{1}{\norm{x}_{\K^n}^{k \alpha + n}}\\
& = \sum_{l=0}^\infty q^{-ln} \norm{x}_{\K^n}^{-n} \sum_{k=0}^\infty \frac{(-z)^k}{k!} \frac{q^{-lk\alpha}}{\norm{x}_{\K^n}^{k \alpha}} - \sum_{l=0}^\infty q^{-ln} \norm{x}_{\K^n}^{-n} \sum_{k=0}^\infty \frac{(-z)^k}{k!} q^{k\alpha} q^{-lk\alpha}\frac{1}{\norm{x}_{\K^n}^{k \alpha}}\\
& = \sum_{l=0}^\infty q^{-ln} \norm{x}_{\K^n}^{-n} \left( \exp(-q^{-l\alpha}\norm{x}_{\K^n}^{-\alpha} z) - \exp(-q^{-l\alpha+\alpha}\norm{x}_{\K^n}^{-\alpha} z) \right).
\end{align*}
Note that the elements $q^{-l\alpha}\norm{x}_{\K^n}^{-\alpha} z$ and $q^{-l\alpha+\alpha}\norm{x}_{\K^n}^{-\alpha} z$ are complex numbers with real part $>0$. We infer by the mean value theorem\footnote{\thefootnote. Note that $\e^{-\Re \zeta} \leq 1$ if $\Re \zeta > 0$.} applied to the function $z \mapsto \e^{-z}$
\begin{align}
\MoveEqLeft
|K_z(x)| 
\leq \sum_{l=0}^\infty q^{-ln} \norm{x}_{\K^n}^{-n} \norm{x}_{\K^n}^{-\alpha} |z| \, |q^{-l\alpha}(1-q^\alpha)| 
\lesssim \frac{|z|}{\norm{x}_{\K^n}^{\alpha + n}} \label{equ-12}.
\end{align}
We will examine two cases.

\noindent\textit{First case.} If $\norm{x}_{\mathbb{K}^n} \leq (\Re z)^{\frac{1}{\alpha}}$, we have
\begin{align*}
\MoveEqLeft
|K_z(x)|
\ov{\eqref{11}}{\lesssim} \frac{1}{(\Re z)^\frac{n}{\alpha}}
= \frac{\Re z}{[(\Re z)^{\frac{1}{\alpha}}]^{\alpha+n}} 
\leq \frac{\Re z}{[\frac{1}{2}(\Re z)^{\frac{1}{\alpha}}+\frac{1}{2}\norm{x}_{\K^n}]^{\alpha+n}} 
\lesssim \frac{\Re z}{[(\Re z)^{\frac{1}{\alpha}}+\norm{x}_{\K^n}]^{\alpha+n}}.
\end{align*}
\textit{Second case.} Now, if $\norm{x}_{\K^n} \geq (\Re z)^{\frac{1}{\alpha}}$ then
\begin{align*}
\MoveEqLeft
|K_z(x)| \ov{\eqref{equ-12}}{\lesssim} \frac{|z|}{\norm{x}_{\K^n}^{\alpha+n}}
\leq \frac{|z|}{[\frac{1}{2}(\Re z)^{\frac{1}{\alpha}}+\frac{1}{2}\norm{x}_{\K^n}]^{\alpha+n}}
\lesssim \frac{|z|}{[(\Re z)^{\frac{1}{\alpha}}+\norm{x}_{\K^n}]^{\alpha+n}}.
\end{align*}
%
\end{proof}

We will use the following estimate. It is stated in \cite[Lemma 4.3 p.~151]{Koc01} that for any $\alpha > 0$ and any $b > 0$ we have
\begin{equation}
\label{Magic-estimate}
\int_{\K}\frac{1}{[b+\norm{x}_{\K}]^{\alpha+1}} \d x
\lesssim \frac{1}{b^\alpha}.
\end{equation}
The authors \cite[Proposition 2 p.~335]{RoZ08} provide an $n$-dimensional generalization, but only for $\mathbb{Q}_q^n$. However, the proof actually holds for the case $\K^n$ where $\K$ is a non-Archimedean local field. This leads to the following result. 

\begin{lemma}
\label{lem-Magic-estimate-n}
Let $n \in \N$, $\alpha > 0$ and $b > 0$.
We have
\begin{equation}
\label{Magic-estimate-n}
\int_{\K^n} \frac{1}{[b+\norm{x}_{\K^n}]^{\alpha+n}} \d x \lesssim \frac{1}{b^\alpha},
\end{equation}
where the implicit constant is independent of $b$.
\end{lemma}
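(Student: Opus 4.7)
The plan is to exploit the fact that the norm $\norm{\cdot}_{\K^n}$ takes values only in the discrete set $q^{\Z} \cup \{0\}$, so the integrand is constant on each of the crowns $\{x \in \K^n : \norm{x}_{\K^n} = q^{-k}\}$, $k \in \Z$. Using the volume formula \eqref{int-sphere}, I would rewrite
\begin{equation*}
\int_{\K^n} \frac{1}{[b+\norm{x}_{\K^n}]^{\alpha+n}} \d x
= (1-q^{-n}) \sum_{k \in \Z} \frac{q^{-kn}}{[b+q^{-k}]^{\alpha+n}},
\end{equation*}
which reduces the problem to a purely numerical estimate on a bilateral series depending on $b > 0$.

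Next, I would choose the unique $k_0 \in \Z$ with $q^{-k_0} \leq b < q^{-k_0+1}$, so that $b \approx q^{-k_0}$ up to a multiplicative constant depending only on $q$, and split the sum at $k_0$.

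For the ``small tail'' $k \geq k_0$ (where $q^{-k} \leq b$), I would bound $b + q^{-k} \geq b$, leaving
\begin{equation*}
\sum_{k \geq k_0} \frac{q^{-kn}}{[b+q^{-k}]^{\alpha+n}} \leq \frac{1}{b^{\alpha+n}} \sum_{k \geq k_0} q^{-kn} \lesssim_{q,n} \frac{q^{-k_0 n}}{b^{\alpha+n}} \lesssim \frac{b^n}{b^{\alpha+n}} = \frac{1}{b^\alpha}.
\end{equation*}
For the ``large tail'' $k < k_0$ (where $q^{-k} > b$), I would bound $b + q^{-k} \geq q^{-k}$, giving
\begin{equation*}
\sum_{k < k_0} \frac{q^{-kn}}{[b+q^{-k}]^{\alpha+n}} \leq \sum_{k < k_0} q^{-kn} q^{k(\alpha+n)} = \sum_{k < k_0} q^{k \alpha} \lesssim_{q,\alpha} q^{k_0 \alpha} \lesssim \frac{1}{b^\alpha},
\end{equation*}
since $\alpha > 0$ makes the geometric series converge. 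Adding the two estimates and absorbing the factor $1 - q^{-n}$ yields the claim.

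There is no real obstacle here: the only minor subtlety is handling both halves of the bilateral sum uniformly in $b$, which is exactly what the choice of threshold $k_0$ with $q^{-k_0} \approx b$ accomplishes, and which requires $\alpha > 0$ so that the geometric series on the $k < k_0$ side converges. The implicit constants depend only on $q$, $n$, and $\alpha$, and not on $b$, as required.
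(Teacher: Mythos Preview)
Your proof is correct. The paper does not actually supply its own proof of this lemma: it merely observes that the $n$-dimensional case for $\mathbb{Q}_q^n$ is proved in \cite[Proposition 2 p.~335]{RoZ08} and that the same argument goes through verbatim for an arbitrary non-Archimedean local field $\K$. Your crown decomposition via \eqref{int-sphere} followed by a split of the bilateral series at the threshold $k_0$ with $q^{-k_0} \approx b$ is exactly the standard computation one would expect the cited reference to contain, and all constants indeed depend only on $q$, $n$, $\alpha$.
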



The following result is an analogue of the classical estimate 
$$
\norm{h_z}_{\L^1(\R^n)} 
\leq \bigg(\frac{|z|}{\Re z}\bigg)^\frac{n}{2},
$$ 
provided by \cite[Remark 3.7.10 p.~153]{ABHN11} (or \cite[p.~543]{HvNVW18}) on the kernel $h_z$ of the heat semigroup on $\R^n$ for any complex number $z$ with $\Re z > 0$.

\begin{cor}
\label{cor-estim-int}
Let $\alpha >0$ and $n \in \N$. For any complex number $z$ such that $\Re z > 0$, we have
\begin{equation*}
\norm{K_z}_{\L^1(\mathbb{K}^n)}
\lesssim_{q,n} \frac{|z|}{\Re z}.
\end{equation*}
The same estimate also holds for $K_z$ replaced by its radially decreasing majorant $\cal{R}_{K_z}(x) \ov{\eqref{radially-majorant}}{=} \underset{|x| \leq |r|} \esssup\, |K_z(r)|$, i.e.
\begin{equation}
\label{majo-final}
\norm{\cal{R}_{K_z}}_{\L^1(\mathbb{K}^n)}
\lesssim_{q,n} \frac{|z|}{\Re z}.
\end{equation}
\end{cor}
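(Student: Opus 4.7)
The plan is to combine the pointwise estimate of Lemma \ref{Lemma-Estimation-noyau} with the integral bound of Lemma \ref{lem-Magic-estimate-n}. By Lemma \ref{Lemma-Estimation-noyau}, we have
\begin{equation*}
|K_z(x)| \lesssim_{q,n} \frac{|z|}{[(\Re z)^{\frac{1}{\alpha}}+\norm{x}_{\K^n}]^{\alpha+n}}, \quad x \in \K^n, \: \Re z > 0.
\end{equation*}
Integrating both sides over $\K^n$ and applying \eqref{Magic-estimate-n} with $b = (\Re z)^{\frac{1}{\alpha}} > 0$ yields
\begin{equation*}
\norm{K_z}_{\L^1(\K^n)} \lesssim_{q,n} |z| \int_{\K^n} \frac{\d x}{[(\Re z)^{\frac{1}{\alpha}}+\norm{x}_{\K^n}]^{\alpha+n}} \lesssim |z| \cdot \frac{1}{((\Re z)^{\frac{1}{\alpha}})^\alpha} = \frac{|z|}{\Re z},
\end{equation*}
which is the first claimed estimate.

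For the second statement, the key observation is that the right-hand side of the pointwise bound from Lemma \ref{Lemma-Estimation-noyau} is a \emph{decreasing function of} $\norm{x}_{\K^n}$. By the comparison \eqref{compar-mod-2}, which gives $|x|=\norm{x}_{\K^n}^n$ for $x \neq e$, this function is also decreasing in the quantity $|x|$ from Definition \ref{Def-crown}. Consequently, for any $r \in \K^n$ with $|x| \leq |r|$, i.e.~$\norm{x}_{\K^n} \leq \norm{r}_{\K^n}$, one has
\begin{equation*}
|K_z(r)| \lesssim_{q,n} \frac{|z|}{[(\Re z)^{\frac{1}{\alpha}}+\norm{r}_{\K^n}]^{\alpha+n}} \leq \frac{|z|}{[(\Re z)^{\frac{1}{\alpha}}+\norm{x}_{\K^n}]^{\alpha+n}}.
\end{equation*}
Taking the essential supremum over all such $r$ gives the same majorant for $\cal{R}_{K_z}(x)$, and integrating again using \eqref{Magic-estimate-n} yields \eqref{majo-final}.

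There is no real obstacle here; the statement is essentially immediate once the pointwise bound of Lemma \ref{Lemma-Estimation-noyau} and the integral formula of Lemma \ref{lem-Magic-estimate-n} are in hand. The only subtlety worth flagging is the passage from the pointwise estimate to an estimate for the radially decreasing majorant, which relies on the fact that the upper bound depends on $x$ only through the monotone quantity $\norm{x}_{\K^n}$, compatible with the order induced by $|\cdot|$ on the Spector-Vilenkin group $(\K^n,+)$ via \eqref{compar-mod-2}.
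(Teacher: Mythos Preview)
Your proof is correct and follows essentially the same approach as the paper: apply the pointwise bound \eqref{First-estimate-kernel} from Lemma \ref{Lemma-Estimation-noyau}, integrate using \eqref{Magic-estimate-n} with $b=(\Re z)^{1/\alpha}$, and for the majorant observe via \eqref{compar-mod-2} that the bound is radially decreasing in $|x|$, so $\cal{R}_{K_z}$ inherits the same pointwise estimate.
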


\begin{proof}
For any complex number $z$ such that $\Re z >0$, we have
\begin{align*}
\MoveEqLeft
\norm{K_z}_{\L^1(\mathbb{K}^n)}
=\int_{\K^n} |K_z(x)| \d x            
\ov{\eqref{First-estimate-kernel}}{\lesssim_{q,n}} \int_{\K^n} \frac{|z|}{\big[(\Re z)^{\frac{1}{\alpha}}+\norm{x}_{\K^n}\big]^{\alpha+n}} \d x 
\ov{\eqref{Magic-estimate-n}}{\lesssim} \frac{|z|}{\Re z}.
\end{align*} 
We turn to the last sentence in the statement.
Note that according to Example \ref{Ex-radial-bis}, we have $|x| \ov{\eqref{compar-mod-2}}{=} \norm{x}_{\K^n}^n$ for any $x \in \K^n$. Thus, the estimate \eqref{First-estimate-kernel} bounds $K_z(x)$ by a radial and radially decreasing term in $|x|$ (as well as in $\norm{x}_{\K^n}$). We infer that \eqref{First-estimate-kernel} holds with the left-hand side $K_z$ replaced by $\mathcal{R}_{K_z}$, and the calculation hereabove finishes the proof.
\end{proof}

In the following theorem, we use the semigroup $(T_t)_{t \geq 0}$ of operators acting on the Banach space $\L^p(\K^n)$ given by convolution with the kernel $K_t(x) \ov{\eqref{Def-kz}}{=} \int_{\K^n} e^{-t \|\xi\|_{\K^n}^\alpha} \overline{\chi(x \cdot \xi)} \d \xi$, where $t>0$ and $x \in \K^n$. Its infinitesimal generator is (the closure of) the operator $-D^\alpha$, where $D^\alpha$ is the Taibleson operator, defined in \eqref{Taibleson-Fourier}. 

\begin{thm}
\label{Main-Th-angle-0-p-adic}
Let $\K$ be a non-Archimedean local field and $n \geq 1$. Consider a $\UMD$ Banach function space $Y$. Suppose that $1 < p < \infty$. The negative of the infinitesimal generator $-D^\alpha \ot \Id_Y$ of the strongly continuous semigroup $(T_t \ot \Id_Y)_{t \geq 0}$ of operators, i.e.~$D^\alpha \ot \Id_Y$, admits a bounded $\H^\infty(\Sigma_\theta)$ functional calculus on the Banach space $\L^p(\K^n,Y)$ for any angle $\theta > 0$ (i.e.~we have $\omega_{\H^\infty}=0$) and a bounded H\"ormander $\Hor^s_2(\R_+^*)$ calculus for any $s > \frac32$.
\end{thm}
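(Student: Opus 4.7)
The plan is to assemble the general functional-calculus machinery of Section \ref{sec-prelim} with the $R$-boundedness result for convolutions on Spector-Vilenkin groups from Section \ref{Sec-R-boundedness-of-some-family}, fed with the sharp kernel estimate of Section \ref{Section-Heat-p-adic-semigroup}. As a starting point, the semigroup $(T_t)_{t \geq 0}$ is strongly continuous and consists of positive contractions on $\L^p(\K^n)$, so Theorem \ref{Th-fun-dilation} applied to the UMD Banach space $Y$ yields that the Bochner extension $A$ admits a bounded $\H^\infty(\Sigma_\theta)$ functional calculus on $\L^p(\K^n,Y)$ for every angle $\theta > \frac{\pi}{2}$. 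This is the initial, suboptimal calculus whose angle I will reduce to $0$.

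The key step is to prove $R$-analyticity of the semigroup of angle $\frac{\pi}{2}$. By Example \ref{Example-Kn}, $(\K^n,+)$ is a unimodular locally compact Spector-Vilenkin group, and each operator $T_z \otimes \Id_Y$ is convolution with the scalar kernel $K_z$ of \eqref{Def-kz}. The crucial input is the estimate \eqref{majo-final} of Corollary \ref{cor-estim-int}, which provides
\[
\bnorm{\cal{R}_{K_z}}_{\L^1(\K^n)} \lesssim_q \frac{|z|}{\Re z}, \quad \Re z > 0.
\]
Hence the renormalized kernels $(\cos\arg z)\, K_z = \tfrac{\Re z}{|z|} K_z$ satisfy the uniform condition \eqref{inter11} required by Theorem \ref{Th-angle-UMD-lattice-Vilenkin}, so the family $\{(\cos\arg z)\, T_z : \Re z > 0\}$ is $R$-bounded on the Bochner space $\L^p(\K^n,Y)$. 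For each $\theta \in (0,\frac{\pi}{2})$, the scalars $\sec \arg z$ are uniformly bounded on $\Sigma_\theta$, so multiplying gives $R$-boundedness of $\{T_z : z \in \Sigma_\theta\}$ for every such $\theta$.

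From this $R$-analyticity, Theorem \ref{thm-R-anal-prime} yields $\omega_R(A) = \frac{\pi}{2} - \frac{\pi}{2} = 0$. Since $Y$ is UMD, the Bochner space $\L^p(\K^n,Y)$ is also UMD and therefore has the triangular contraction property $(\Delta)$. Combining the bounded $\H^\infty$ calculus obtained in the first paragraph (at some angle $\theta \in (\omega(A),\pi)$) with $\omega_R(A) = 0$, Theorem \ref{thm-R-anal} gives the identity $\omega_{\H^\infty}(A) = \omega_R(A) = 0$, which proves the first assertion.

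For the Hörmander calculus I will invoke Theorem \ref{prop-KrW18}. The space $\L^p(\K^n,Y)$ is a reflexive complex Banach lattice with finite cotype (both inherited from the UMD property of $Y$), the operator $A$ has the bounded $\H^\infty$ calculus just established, and the $R$-boundedness of the family \eqref{family-23} holds with exponent $\beta = 1$ by the second paragraph. Theorem \ref{prop-KrW18} then produces a bounded $\Hor^s_2(\R_+^*)$ functional calculus for every $s > \beta + \frac{1}{2} = \frac{3}{2}$, which is the second assertion. The essential technical obstacle was in fact already resolved in Corollary \ref{cor-estim-int}: the dimension-independent linear bound $\frac{|z|}{\Re z}$ for $\norm{\cal{R}_{K_z}}_{\L^1}$ replaces the $\bigl(\tfrac{|z|}{\Re z}\bigr)^{n/2}$ growth seen for the Euclidean heat semigroup, and is precisely what allows the choice $\beta = 1$ and consequently the dimension-free Hörmander threshold $\frac{3}{2}$.
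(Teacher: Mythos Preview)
Your proof is correct and follows essentially the same approach as the paper: obtain an initial $\H^\infty$ calculus from Theorem \ref{Th-fun-dilation}, feed the kernel bound of Corollary \ref{cor-estim-int} into the Spector--Vilenkin $R$-boundedness criterion (Theorem \ref{Th-angle-UMD-lattice-Vilenkin}), then apply Theorems \ref{thm-R-anal-prime} and \ref{thm-R-anal} to reduce the angle to $0$, and finally invoke Theorem \ref{prop-KrW18} with $\beta=1$ for the H\"ormander calculus. The only cosmetic difference is that you first establish $R$-boundedness of $\{(\cos\arg z)T_z:\Re z>0\}$ and then deduce the sectorial $R$-boundedness via Kahane's contraction principle, whereas the paper treats the two families separately; both routes are equivalent.
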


\begin{proof}
First note that the Taibleson operator $D^\alpha \ot \Id_Y$ has dense range. Indeed, if $Y = \C$, the range of the operator $D^\alpha$ contains the Bruhat-Schwartz test function space $\scr{D}(\mathbb K^n)$ \cite[p.~117]{Tai75} which is dense in $\L^p(\mathbb K^n)$ for $1 < p < \infty$ \cite[p.~118]{Tai75}. Since $\L^p(\mathbb K^n) \ot Y$ is itself dense in $\L^p(\mathbb K^n,Y)$ and $D^\alpha \ot \Id_Y$ has dense range. Then as $(T_t)_{t \geq 0}$ is a semigroup of positive contractions, by Theorem \ref{Th-fun-dilation}, the operator $A$ admits a bounded $\H^\infty(\Sigma_\theta)$ functional calculus for any angle $\theta >\frac{\pi}{2}$.  Recall that $(T_z \ot \Id_Y)_{z \in \Sigma_{\theta}}$ denotes the complex time semigroup with generator $-D^\alpha \ot \Id_Y$. Note that each operator $T_z$ is given by convolution with the integral kernel $K_z$ from \eqref{Def-kz}. Let $\cal{R}_{K_z}(x) \ov{\eqref{radially-majorant}}{=} \underset{|x| \leq |r|}{\esssup}\, |K_z(r)|$ denote its radially decreasing majorant.

Fix an angle $\theta \in (0,\frac{\pi}{2})$. For any complex number $z$ with $|\arg z|  \leq \theta$, by means of the estimate provided by Corollary \ref{cor-estim-int}, we have
\begin{equation*}
\label{}
\norm{\cal{R}_{K_z}}_{\L^1(\mathbb{K}^n)}
\lesssim_{q,n} \frac{|z|}{\Re z}
=\frac{1}{\cos \arg z}
\leq \frac{1}{\cos \theta}.
\end{equation*}
According to Theorem \ref{Th-angle-scalar-Vilenkin}, we obtain that the family $(T_z)_{|\arg z| \leq \theta}$ is $R$-bounded over the Banach space $\L^p(\K^n)$ (in particular the semigroup $(T_t)_{t \geq 0}$ is bounded analytic). According to Theorem \ref{thm-R-anal-prime}, we deduce that $\omega_R(D^\alpha)=0$. By Theorem \ref{thm-R-anal}, we conclude that $\omega_{\H^\infty}(D^\alpha)=0$, i.e.~the operator $D^\alpha$ admits a bounded $\H^\infty(\Sigma_\theta)$ functional calculus for any angle $\theta > 0$.

For any complex number $z$ such that $\Re z >0$, using again the estimate provided by Corollary \ref{cor-estim-int}, we deduce that
\begin{equation*}
\label{}
\bnorm{|\cos \arg z|\cal{R}_{K_z}}_{\L^1(\mathbb{K}^n)}
\lesssim 1.
\end{equation*}
By Theorem \ref{Th-angle-scalar-Vilenkin}, we obtain that the family 
\begin{equation*}
\label{}
\left\{ (\cos \arg z) T_z :  \Re z > 0 \right\}
=\left\{ (\cos \arg z) (K_z * \cdot) :  \Re z > 0 \right\}
\end{equation*}
is $R$-bounded over the Banach space $\L^p(\K^n)$.
Since the semigroup $(T_t)_{t \geq 0}$ is bounded analytic, we conclude with Theorem \ref{prop-KrW18} that the operator $D^\alpha$ has a bounded H\"ormander $\Hor^s_2(\R_+^*)$ functional calculus for any $s > 1 + \frac12 = \frac32$. 

The vector-valued case is similar using Theorem \ref{Th-angle-UMD-lattice-Vilenkin} instead of Theorem \ref{Th-angle-scalar-Vilenkin}.
\end{proof}

\section{Application to local and global existence of solutions of semilinear evolution equations}
\label{sec-application-existence}

We finish by describing some applications to the well-posedness of some evolution equations. Here, we use the time weighted space $\L^{q}_\mu((0,a),Z)$ of the paper \cite[p.~2031]{PSW} defined by $y \in \L^{q}_\mu((0,a),Z)$ if and only if $t^{1 - \mu} y \in \L^q((0,a),Z)$ and similarly the vector-valued Sobolev space $\W^{1,q}_\mu((0,a),Z)$. The case $B = 0$ of the next theorem is already relevant. We note that one could also formulate variants of this result within the more general framework of \cite[Chapter~18]{HvNVW23}, leading to corresponding well-posedness results under alternative structural assumptions. Finally, we warn the reader that we follow the notation $X_{\gamma,\mu}$ of \cite{PSW}, which unfortunately differs from the notation used in \cite[p.~692]{HvNVW23} in the case $\gamma=1$.

\begin{thm}
\label{thm-evolution-equation}
Suppose that $1 < p,q < \infty$. Let $Y$ be a $\UMD$ Banach function space. Consider some $\alpha > 0$ and $n \in \N$.
Let $B$ be a sectorial operator acting on $Y$ admitting a bounded $\H^\infty(\Sigma_\omega)$ functional calculus of angle $< \frac{\pi}{2}$. Let $\beta \in [0,1)$ and write $X_\beta \ov{\mathrm{def}}{=} \dom(D^\alpha \ot \Id_Y + \Id_{\L^p(\K^n)} \ot B)^\beta$.
Consider a bounded bilinear map $G \co X_\beta \times X_\beta \to X_0$ together with the following semilinear parabolic evolution equation.
\begin{equation}
\label{equ-thm-evolution-equation}
\frac{\partial y}{\partial t}(t,s,x) + D^\alpha_s y(t,s,x) + B_x y(t,s,x) = G(y,y), \quad y(0) = y_0 .
\end{equation}
Then if the parameters $q,\mu$ and $\beta$ are chosen such that
$\mu \in (\frac{1}{q},1]$, $\beta \in (\mu - \frac{1}{q},1)$ and $2 \beta - 1 \leq \mu - \frac{1}{q}$, then for each initial value $y_0$ belonging to the real interpolation space $X_{\gamma,\mu} \ov{\mathrm{def}}{=} (X_0,X_1)_{\mu-\frac{1}{q},q}$, there exist a positive real number $a  = a(y_0)$ and a unique solution of the equation \eqref{equ-thm-evolution-equation} in the class
\[ 
y \in \W^{1,q}_\mu((0,a),X_0) \cap \L^{q}_\mu((0,a),X_1) \hookrightarrow \mathrm{C}([0,a],X_{\gamma,\mu}) .
\]
The solution exists on a maximal time interval $[0,t_+(y_0))$ and depends continuously on the data and satisfies
\[ 
y \in \W^{1,q}_\loc((0,t_+(y_0)),X_0) \cap \L^{q}
_\loc((0,t_+),X_1) \hookrightarrow \mathrm{C}((0,t_+(y_0)),X_{\gamma,1}).
\]
Hence it regularizes instantly, provided $\mu <  1$.
\end{thm}

\begin{proof}
According to Theorem \ref{Main-intro}, the operator $D^\alpha \ot \Id_Y$ admits a bounded $\H^\infty(\Sigma_\omega)$ functional calculus for any angle $\omega > 0$. Since $X = \L^p(\K^n,Y)$ is a Banach lattice, according to \cite[Theorem 1.1 p.~353]{LM03} (see also \cite[Theorem 1.2]{LLLM}), the operator $D^\alpha \ot \Id_Y + \Id_{\L^p(\K^n)} \ot B$ admits a bounded $\H^\infty(\Sigma_\omega)$ functional calculus for some angle $\omega < \frac{\pi}{2}$, in particular bounded imaginary powers to the same angle by \cite[Theorem 15.3.20 p.~476]{HvNVW23}. Since $X$ is a $\UMD$ Banach space, \cite[Theorem 2.1 p.~2033]{PSW} can be applied with the operator $A=D^\alpha \ot \Id_Y + \Id_{\L^p(\K^n)} \ot B$, $X_0=\L^p(\K^n,Y)$, $X_1=\dom A$ and with $q$ instead of $p$. This completes the proof.
\end{proof}

\begin{remark} \normalfont
In Theorem \ref{thm-evolution-equation}, already the case $B = 0$ is relevant in physical examples. Indeed, then \eqref{equ-thm-evolution-equation} is a semilinear parabolic heat equation that can model a $q$-adic reaction-diffusion equation with $t$ the time variable, $s$ the space variable and $x$, depending on the choice of the $\UMD$ Banach function space $Y$, a variable of population, velocity, trait or chemical state.
\end{remark}

\begin{remark} \normalfont
For relevant non-zero examples of operators $B$ in Theorem \ref{thm-evolution-equation} admitting a bounded $\H^\infty$ functional calculus, we refer to \cite[p.~262 -- 267]{KuW04}. This covers some elliptic operators on the space $\L^p(\R^n)$, either with constant coefficients or with H\"older continuous coefficients in the principal part, as well as divergence form operators with H\"older continuous coefficients.
\end{remark}

We further cite the following consequences from the results \cite[Corollary 2.2 p.~2034]{PSW} and \cite[Corollary 2.3 p.~2035]{PSW}.

\begin{cor}
Consider that the assumptions of Theorem \ref{thm-evolution-equation} hold.
\begin{enumerate}
\item
Then for any given $a > 0$ there exists $r = r(a) > 0$ such that the solution of the equation \eqref{equ-thm-evolution-equation} exists on $[0,a]$ whenever $\norm{y_0}_{X_{\gamma,\mu}} \leq r$.
\item
If $0 \in \rho(D^\alpha \ot \Id_Y + \Id_{\L^p(\K^n)} \ot B)$, then $r > 0$ is independent of $a$.
\item
If $0 \in \rho(D^\alpha \ot \Id_Y + \Id_{\L^p(\K^n)} \ot B)$ and $\frac{1}{q} < 1 - \beta$, then the trivial solution of the equation \eqref{equ-thm-evolution-equation} is exponentially stable in the state space $X_{\gamma,1}$.
Moreover, there exists $r_0 > 0$ such that the solution $y(t)$ of \eqref{equ-thm-evolution-equation} converges exponentially to zero in $X_{\gamma,1}$, provided $\norm{y_0}_{X_{\gamma,\mu}} \leq r_0$.
\end{enumerate}
\end{cor}

Concerning conditional global existence, the following holds.

\begin{cor}
The local solution of Theorem \ref{thm-evolution-equation} exists globally in each of the following cases:
\begin{enumerate}
\item $y([0,t_+(y_0))) \subset X_{\gamma,\mu}$ is bounded in the subcritical case $\mu - \frac1q > 2\beta - 1$,
\item $y([0,t_+(y_0))) \subset X_{\gamma,\mu}$ is relatively compact in the critical case $\mu - \frac1q = 2\beta - 1$.
\end{enumerate}
\end{cor}

We finish by giving a concrete example.

\begin{example} \normalfont
Suppose that $1 < p,q < \infty$. Let $\beta \in [0,1)$. Consider the Laplacian $B \ov{\mathrm{def}}{=} -\Delta$ on the space $Y=\L^q(\R^d)$ with domain $\dom B=\W^{2,q}(\R^d)$ by \cite[Remark 8.3.6 p.~226]{Haa06}. We have $X_0 =\L^p(\K^n,\L^q(\R^d))$ and $
\dom B^\beta=\W^{2\beta,q}(\R^d)$ according to \cite[Theorem 15.3.11 p.~462]{HvNVW23}. Set $A \ov{\mathrm{def}}{=}D^\alpha \ot \Id_Y + \Id_{\L^p(\K^n)} \ot B$, 
$$
A_s \ov{\mathrm{def}}{=} D^\alpha \ot \Id_{\L^q(\R^d)}
\quad \text{and} \quad
B_x \ov{\mathrm{def}}{=} \Id_{\L^p(\K^n)} \ot B.
$$
We have $A \ov{\mathrm{def}}{=} A_s + B_x$. Note that by \cite[Theorem 15.3.9 p.~460]{HvNVW23} and \cite[Theorem 15.4.11 p.~496]{HvNVW23}, we have 
\begin{align}
\MoveEqLeft
\label{embedding}
X_\beta
=\dom A^\beta 
=(X_0,\dom A )_{\beta}
=(X_0,\dom(A_s) \cap \dom(B_x) )_{\beta} \\      
&\subset (X_0,\dom A_s)_{\beta} \cap (X_0,\dom B_x)_{\beta}
= \dom A_s^\beta \cap \dom B_x^\beta. \nonumber
\end{align}
On the one hand, if $\beta > \frac{d}{2q}$, by the Sobolev embedding theorem \cite[Theorem 1.3.5 (c) p.~24]{Gra14b}, we have a continuous embedding
\[
\dom B^\beta
= \W^{2\beta,q}(\R^d)
\hookrightarrow
\L^\infty(\R^d).
\]
Using \cite[Theorem (1) p.~80]{DeF93} for tensorizing this embedding by $\Id_{\L^p(\K^n)}$ yields , this yields
\[
\dom B_x^\beta
= \L^p(\K^n,\dom B^\beta)
\hookrightarrow
\L^p(\K^n,\L^\infty(\R^d)).
\]
On the other hand, assume that the Sobolev embedding theorem \cite[Lemma 3 p.~199]{Tai68} admits the following vector-valued generalization with values in $\L^q(\R^d)$: if $\beta > \frac{n}{p\alpha}$, then
\[
\dom A_s^\beta
\hookrightarrow
\L^\infty(\K^n,\L^q(\R^d)).
\]
We expect that this can be proved by adapting Taibleson's argument to $\L^q(\R^d)$-valued functions, but including the details would take us too far afield. If $
\beta > \max\big\{\frac{d}{2q},\frac{n}{p\alpha}\big\}$, 
we obtain with \eqref{embedding} the continuous embeddings $
X_\beta \hookrightarrow \L^\infty(\K^n,\L^q(\R^d))$ and $X_\beta \hookrightarrow \L^p(\K^n,\L^\infty(\R^d))$.
Now, we can define the bilinear map $G \co X_\beta \times X_\beta \to X_0$ by 
\[
G(u,v)(s,x) 
\ov{\mathrm{def}}{=} u(s,x)v(s,x),
\qquad s \in \K^n, x \in \R^d,
\]
for any vector-valued functions $u,v \in X_\beta$. Then $G$ would be a bounded bilinear map since we have for any $u,v \in X_\beta$,
\begin{align*}
\norm{G(u,v)}_{X_0}
&= \norm{uv}_{\L^p(\K^n,\L^q(\R^d))} 
\leq \norm{u}_{\L^\infty(\K^n,\L^q(\R^d))}
\norm{v}_{\L^p(\K^n,\L^\infty(\R^d))} 
\lesssim \norm{u}_{X_\beta}\norm{v}_{X_\beta}.
\end{align*}
In particular, the semilinear equation in Theorem~\ref{thm-evolution-equation} covers the quadratic reaction-diffusion model $\partial_t y + D^\alpha y-\Delta y = y^2$.  
\end{example}

\paragraph{Declaration of interest} None.

\paragraph{Competing interests} The authors declare that they have no competing interests.

\paragraph{Data availability} No data sets were generated during this study.

\paragraph{Acknowledgements and Grants} We are thankful to Wilson Zuniga Galindo for short discussions. We thank the anonymous referee for a careful reading and for constructive comments that improved the presentation of the paper.


\small

{\footnotesize

\vspace{0.2cm}

\noindent C\'edric Arhancet\\
6 rue Didier Daurat \\
81000 ALBI \\
FRANCE\\
URL: \href{http://sites.google.com/site/cedricarhancet}{https://sites.google.com/site/cedricarhancet}\\
cedric.arhancet@protonmail.com\\
ORCID: 0000-0002-5179-6972 

\vspace{0.2cm}

\noindent Christoph Kriegler\\
Universit\'e Clermont Auvergne\\
CNRS\\
LMBP\\
F-63000 CLERMONT-FERRAND\\
FRANCE \\
URL: \href{https://lmbp.uca.fr/~kriegler/indexenglish.html}{https://lmbp.uca.fr/{\raise.17ex\hbox{$\scriptstyle\sim$}}\hspace{-0.1cm} kriegler/indexenglish.html}\\
christoph.kriegler@uca.fr \\
ORCID: 0000-0001-8120-6251
}


\begin{thebibliography}{79}

\bibitem[AbA02]{AbA02}
Y. A. Abramovich and C. D. Aliprantis.
\newblock An invitation to operator theory.
\newblock Graduate Studies in Mathematics, 50. American Mathematical Society, Providence, RI, 2002.


\bibitem[AVDR81]{AVDR81}
G. N. Agaev, N. Y. Vilenkin, G. M. Dzhafarli, A. I. Rubinshtein.
\newblock Multiplicative systems of functions and harmonic analysis on zero-dimensional groups (Russian).
\newblock Elm, Baku, 1981.

















\bibitem[AKS10]{AKS10}
S. Albeverio, A. Y. Khrennikov and V. M. Shelkovich.
\newblock Theory of $p$-adic distributions: linear and nonlinear models.
\newblock London Math. Soc. Lecture Note Ser., 370. Cambridge University Press, Cambridge, 2010.

\bibitem[Ale94]{Ale94}
G. Alexopoulos.
\newblock Spectral multipliers on Lie groups of polynomial growth.
\newblock Proc. Amer. Math. Soc. 120 (1994), no. 3, 973--979.









\bibitem[Are04]{Are04}
W. Arendt.
\newblock Semigroups and evolution equations: functional calculus, regularity and kernel estimates. 
\newblock Evolutionary equations. Vol. I, 1--85, Handb. Differ. Equ., North-Holland, Amsterdam, 2004. 


\bibitem[ABHN11]{ABHN11}
W. Arendt, C. J. K. Batty, M. Hieber and F. Neubrander.
\newblock Vector-valued Laplace transforms and Cauchy problems. Second edition.
\newblock Monographs in Mathematics, 96. Birkhäuser/Springer Basel AG, Basel, 2011.

\bibitem[Arh12]{Arh12}
C. Arhancet.
\newblock Unconditionality, Fourier multipliers and Schur multipliers.
\newblock Colloq. Math. 127 (2012), no. 1, 17--37.











 

\bibitem[ArK23]{ArK23}
C. Arhancet and C. Kriegler.
\newblock Projections, multipliers and decomposable maps on noncommutative $\L^p$-spaces.
\newblock M\'emoires de la Soci\'et\'e Math\'ematique de France (N.S.) (2023), no. 177.

 


\bibitem[AHKP25]{AHKP25}
C. Arhancet, L. Hagedorn, C. Kriegler and P. Portal.
\newblock The harmonic oscillator on the Moyal-Groenewold plane: an approach via Lie groups and twisted Weyl tuples. 
\newblock Preprint, 67 pages. \href{https://arxiv.org/pdf/2312.06143}{arXiv:2312.06143}














 


 










\bibitem[ABK99]{ABK99}
V. A. Avetisov, A. H. Bikulov and S. V. Kozyrev.
\newblock Application of $p$-adic analysis to models of breaking of replica symmetry.
\newblock J. Phys. A 32 (1999), no. 50, 8785--8791.

\bibitem[ABKO02]{ABKO02}
V. A. Avetisov, A. H. Bikulov, S. V. Kozyrev and V. A. Osipov.
\newblock $p$-adic models of ultrametric diffusion constrained by hierarchical energy landscapes.
\newblock J. Phys. A 35 (2002), no. 2, 177--189.

\bibitem[ABO03]{ABO03}
V. A. Avetisov, A. H. Bikulov and V. A. Osipov.
\newblock $p$-adic description of characteristic relaxation in complex systems.
\newblock J. Phys. A 36 (2003), no. 15, 4239--4246.

\bibitem[ABO04]{ABO04}
V. A. Avetisov, A. H. Bikulov and V. A. Osipov.
\newblock $p$-adic models of ultrametric diffusion in the conformational dynamics of macromolecules.
\newblock Tr. Mat. Inst. Steklova 245 (2004), 55--64. Proc. Steklov Inst. Math.(2004), no. 2, 48--57.

\bibitem[ABZ14]{ABZ14}
V. A. Avetisov, A. K. Bikulov and A. P. Zubarev.
\newblock Ultrametric random walk and dynamics of protein molecules.
\newblock Proc. Steklov Inst. Math. 285, 3--25 (2014).


\bibitem[BHV08]{BHV08}
B. Bekka, P. de la Harpe and A. Valette.
\newblock Kazhdan's property (T).
\newblock New Mathematical Monographs, 11. Cambridge University Press, Cambridge, 2008.

\bibitem[BGPV14]{BGPV14}
A. D. Bendikov, A. Grigor'yan, K. Pittet and V. Voess.
\newblock Isotropic Markov semigroups on ultra-metric space. 
\newblock Uspekhi Mat. Nauk 69 (2014), no. 4 (418), 3--102; translation in Russian Math. Surveys 69 (2014), no. 4, 589--680. 


\bibitem[BGHH21]{BGHH21}
A. Bendikov, A. Grigor'yan, E. Hu and J. Hu.
\newblock Heat kernels and non-local Dirichlet forms on ultrametric spaces.
\newblock Ann. Sc. Norm. Super. Pisa Cl. Sci. (5) 22 (2021), no. 1, 399--461.

\bibitem[BeS88]{BeS88}
C. Bennett and R. Sharpley.
\newblock Interpolation of operators.
\newblock Pure Appl. Math., 129. Academic Press, Inc., Boston, MA, 1988.




\bibitem[BeF75]{BeF75}
C. Berg and G. Forst.
\newblock Potential theory on locally compact abelian groups.
\newblock Ergebnisse der Mathematik und ihrer Grenzgebiete, Band 87. Springer-Verlag, New York-Heidelberg, 1975.

\bibitem[Ber52]{Ber52}
H. Bergstr\"om.
\newblock On some expansions of stable distribution functions.
\newblock Ark. Mat. 2 (1952), 375--378.












\bibitem[BiZ23]{BiZ23}
A. K. Bikulov and A. P. Zubarev.
\newblock Oscillations in $p$-adic diffusion processes and simulation of the conformational dynamics of protein.
\newblock $p$-Adic Numbers Ultrametric Anal. Appl. 15 (2023), no. 3, 169--186.


\bibitem[Blu03]{Blu03}%
S. Blunck.
\newblock A H\"ormander-type spectral multiplier theorem for operators without heat kernel.
\newblock Ann. Sc. Norm. Super. Pisa Cl. Sci. (5) 2 (2003), no. 3, 449--459.

\bibitem[Bou19]{Bou19}
N. Bourbaki.
\newblock \'El\'ements de math\'ematique. Th\'eories spectrales. Chapitres 1 et 2. (French) [Elements of mathematics. Spectral theories. Chapters 1 and 2]. Second edition.
\newblock Springer, Cham, 2019.

\bibitem[Bou98]{Bou98}
N. Bourbaki.
\newblock General topology. Chapters 1--4. Translated from the French. Reprint of the 1989 English translation. Elements of Mathematics.
\newblock (Berlin). Springer-Verlag, Berlin, 1998.

\bibitem[Bou04]{Bou04}
N. Bourbaki.
\newblock Integration. II. Chapters 7--9. Translated from the 1963 and 1969 French originals by Sterling K. Berberian. Elements of Mathematics (Berlin).
\newblock Springer-Verlag, Berlin, 2004.

\bibitem[Bou84]{Bou84}
J. Bourgain.
\newblock Extension of a result of Benedek, Calderon and Panzone.
\newblock Ark. Mat. 22 (1984), no. 1, 91--95.





\bibitem[BrF93]{BrF93}
L. Brekke and P. Freund.
\newblock $p$-adic numbers in physics.
\newblock Phys. Rep. 233 (1993), no. 1, 1--66.























\bibitem[Chr91]{Chr91}%
M. Christ.
\newblock $L^p$ bounds for spectral multipliers on nilpotent groups.
\newblock Trans. Amer. Math. Soc. 328 (1991), no. 1, 73--81.










%


\bibitem[CCMS17]{CCMS17}
V. Casarino, M. G. Cowling, A. Martini and A. Sikora.
\newblock Spectral multipliers for the Kohn Laplacian on forms on the sphere in $\mathbb{C}^n$.
\newblock J. Geom. Anal. 27 (2017), no. 4, 3302--3338.


\bibitem[DH16]{DH16}
Y. de Cornulier and P. de la Harpe.
\newblock Metric geometry of locally compact groups.
\newblock EMS Tracts in Mathematics 25, 2016.









%












\bibitem[CGHM94]{CGHM94}
M. Cowling, S. Giulini, A. Hulanicki and G. Mauceri.
\newblock Spectral multipliers for a distinguished Laplacian on certain groups of exponential growth.
\newblock Studia Math. 111 (1994), no. 2, 103--121.

\bibitem[CoS01]{CoS01}
M. Cowling and A. Sikora.
\newblock A spectral multiplier theorem for a sublaplacian on $\mathrm{SU}(2)$.
\newblock Math. Z. 238 (2001), no. 1, 1--36.

























\bibitem[DeF93]{DeF93}
A. Defant and K. Floret.
\newblock Tensor norms and operator ideals.
\newblock North-Holland Mathematics Studies, 176. North-Holland Publishing Co., Amsterdam, 1993.


\bibitem[DeE14]{DeE14}
A. Deitmar and S. Echterhoff.
\newblock Principles of harmonic analysis. Second edition.
\newblock Universitext Springer, Cham, 2014.

\bibitem[DeK17]{DeK17}
L. Deleaval and C. Kriegler.
\newblock Dunkl spectral multipliers with values in UMD lattices.
\newblock J. Funct. Anal. 272 (2017), no. 5, 2132--2175.

\bibitem[DKK21]{DKK21}
L. Deleaval, M. Kemppainen and C. Kriegler.
\newblock H\"ormander functional calculus on UMD lattice valued $L^p$ spaces under generalized Gaussian estimates.
\newblock J. Anal. Math. 145 (2021), no. 1, 177--234.

\bibitem[DeK23]{DeK23}
L. Deleaval and C. Kriegler.
\newblock Maximal H\"ormander functional calculus on $L^p$ spaces and UMD lattices.
\newblock Int. Math. Res. Not. IMRN (2023), no. 6, 4643--4694.

\bibitem[DeV22]{DeV22}
J. Delgado and J. P. Velasquez-Rodriguez.
\newblock Fundamental solution of the Vladimirov-Taibleson operator on noncommutative Vilenkin groups.
\newblock Preprint, arXiv:2204.07133.

 


\bibitem[Den21]{Den21}
R. Denk.
\newblock An introduction to maximal regularity for parabolic evolution equations.
\newblock Springer, Proc. Math. Stat., 346, Singapore, 2021, 1--70.








\bibitem[DJT95]{DJT95}
J. Diestel, H. Jarchow and A. Tonge.
\newblock Absolutely summing operators.
\newblock Cambridge Studies in Advanced Mathematics, 43. Cambridge University Press (1995).

\bibitem[Dieu78]{Dieu78}
J. Dieudonn\'e.
\newblock Treatise on Analysis, Vol. 6.
\newblock Academic Press, New York, 1978.







\bibitem[DHW24]{DHW24}
F. Ding, G. Hong and X. Wang.
\newblock A noncommutative maximal inequality for Fejér means on totally disconnected non-abelian groups.
\newblock Preprint, arXiv:2403.09263.


\bibitem[DKKVZ17]{DKKVZ17}
B. Dragovich, A. Y. Khrennikov, S. V. Kozyrev, I. V. Volovich and E. I. Zelenov.
\newblock $p$-adic mathematical physics: the first 30 years.
\newblock $p$-Adic Numbers Ultrametric Anal. Appl. 9 (2017), no. 2, 87--121.

\bibitem[Duo96]{Duo96}
X. T. Duong.
\newblock From the $L^1$ norms of the complex heat kernels to a H\"ormander multiplier theorem for sub-Laplacians on nilpotent Lie groups.
\newblock Pacific J. Math. 173 (1996), no. 2, 413--424.

\bibitem[DOS02]{DOS02}
X. T. Duong, E. M. Ouhabaz and A. Sikora.
\newblock Plancherel-type estimates and sharp spectral multipliers.
\newblock J. Funct. Anal. 196 (2002), no. 2, 443--485.


\bibitem[EdG77]{EdG77}
R. E. Edwards and G. I. Gaudry.
\newblock Littlewood-Paley and multiplier theory.
\newblock Ergebnisse der Mathematik und ihrer Grenzgebiete, Band 90. Springer-Verlag, Berlin-New York, 1977.







\bibitem[En89]{Eng89}
R. Engelking.
\newblock General topology.
\newblock Translated from the Polish by the author. Second edition. Sigma Series in Pure Mathematics, 6. Heldermann Verlag, Berlin, 1989.
























\bibitem[Fel71]{Fel71}
W. Feller.
\newblock An introduction to probability theory and its applications. Vol. II.
\newblock John Wiley \& Sons, Inc., New York-London-Sydney, 1971.

\bibitem[Fol16]{Fol16}
G. Folland.
\newblock A course in abstract harmonic analysis.
\newblock Second edition. Textbooks in Mathematics. CRC Press, Boca Raton, FL, 2016.















\bibitem[FGZ22]{FGZ22}
A. R. Fuquen-Tibatá, H. Garc\'ia-Compe\'an, W. A. Z\'uniga-Galindo.
\newblock Euclidean quantum field formulation of $p$-adic open string amplitudes.
\newblock Nuclear Phys. B 975 (2022), Paper No. 115684, 27 pp.

\bibitem[Gra14a]{Gra14a}
L. Grafakos.
\newblock Classical Fourier analysis. Third edition.
\newblock Graduate Texts in Mathematics, 249. Springer, New York, 2014.

\bibitem[Gra14b]{Gra14b}
L. Grafakos.
\newblock Modern Fourier analysis. Third edition.
\newblock Graduate Texts in Mathematics, 250. Springer, New York, 2014.

		
		
		
	
	

\bibitem[Gou20]{Gou20}
F. Q. Gouv\^ea.
\newblock $p$-adic numbers.
\newblock Universitext, Springer, Cham, 2020.


\bibitem[Gri23]{Gri23}
A. Grigor'yan.
\newblock Analysis on ultra-metric spaces via heat kernels.
\newblock $p$-Adic Numbers Ultrametric Anal. Appl. 15 (2023), no. 3, 204--242.



 








%












 





\bibitem[Haa06]{Haa06}
M. Haase.
\newblock The functional calculus for sectorial operators.
\newblock Operator Theory: Advances and Applications, 169. Birkh\"auser Verlag (2006).

\bibitem[HaP23]{HaP23}
M. Haase and F. Pannasch.
\newblock Holomorphic H\"ormander-type functional calculus on sectors and strips.
\newblock Trans. Amer. Math. Soc. Ser. B 10 (2023), 1356--1410.

\bibitem[Hal50]{Hal50}
M. Jr. Hall.
\newblock A topology for free groups and related groups.
\newblock Ann. of Math. (2) 52 (1950), 127--139.

\bibitem[Har90]{Har90}
S. Haran.
\newblock Riesz potentials and explicit sums in arithmetic.
\newblock Invent. Math. 101 (1990), no. 3, 697--703.

\bibitem[Hara93]{Hara2}
S. Haran.
\newblock Analytic potential theory over the $p$-adics.
\newblock Ann. Inst. Fourier (Grenoble) 43 (1993), no. 4, 905--944.








\bibitem[Heb90]{Heb90}
W. Hebisch.
\newblock A multiplier theorem for Schr\"odinger operators.
\newblock Colloq. Math. 60/61 (1990), no. 2, 659--664.

\bibitem[Heb93]{Heb93}
W. Hebisch.
\newblock Multiplier theorem on generalized Heisenberg groups.
\newblock Colloq. Math. 65 (1993), no. 2, 231--239.


\bibitem[HeR79]{HeR79}
E. Hewitt and K. A. Ross.
\newblock Abstract harmonic analysis. Vol. I.
Structure of topological groups, integration theory, group
representations. Second edition.
\newblock  Grundlehren der Mathematischen Wissenschaften, 115. Springer-Verlag, Berlin-New York, 1979.







\bibitem[HoM13]{HoM13}
K. H Hofmann and S. A. Morris.
\newblock The structure of compact groups. 3rd Edition, Revised and Augmented.
\newblock De Gruyter Studies in Mathematics 25, 2013. 























\bibitem[HvNVW16]{HvNVW16}
T. Hyt\"onen, J. van Neerven, M. Veraar and L. Weis.
\newblock Analysis in Banach spaces, Volume~I: Martingales and Littlewood-Paley theory.
\newblock Springer, 2016. 

\bibitem[HvNVW18]{HvNVW18}
T. Hyt\"onen, J. van Neerven, M. Veraar and L. Weis.
\newblock Analysis in Banach spaces, Volume~II: Probabilistic Methods and Operator Theory. 
\newblock Springer, 2018. 

\bibitem[HvNVW23]{HvNVW23}
T. Hyt\"onen, J. van Neerven, M. Veraar and L. Weis.
\newblock Analysis in Banach spaces, Volume~III: Harmonic Analysis and Spectral Theory. 
\newblock Springer, 2023. 










\bibitem[Jan22]{Jan22}
S. Janson.
\newblock Stable distributions.
\newblock Preprint, arXiv:1112.0220.














































\bibitem[KaW01]{KaW01}
N. J. Kalton and L. Weis.
\newblock The $\H^\infty$-calculus and sums of closed operators.
\newblock Math. Ann. 321 (2001), no. 2, 319--345.










\bibitem[KaV22]{KaV22}
A. Kassymov and J.P. Velasquez-Rodriguez.
\newblock Hardy inequalities on constant-order noncommutative Vilenkin groups.
\newblock Preprint, arXiv:2208.03495.




\bibitem[KKZ18]{KKZ18}
A. Y. Khrennikov, S. V. Kozyrev and W. A. Zuniga-Galindo.
\newblock Ultrametric pseudodifferential equations and applications. 
\newblock Encyclopedia of Mathematics and its Applications, 168. Cambridge University Press, Cambridge, 2018. 

\bibitem[KOJ16a]{KOJ16a}
A. Y. Khrennikov, K. Oleschko, M. de Jes\'us Correa L\'opez.
\newblock Application of $p$-adic wavelets to model reaction-diffusion dynamics in random porous media.
\newblock J. Fourier Anal. Appl. 22 (2016), no. 4, 809--822.

\bibitem[KOJ16b]{KOJ16b}
A. Khrennikov, K. Oleschko and M. de Jes\'us Correa L\'opez.
\newblock Modeling fluid's dynamics with master equations in ultrametric spaces representing the treelike structure of capillary networks.
\newblock Entropy 18 (2016), no. 7, Paper No. 249, 28 pp.

\bibitem[KhK20]{KhK20}
A. Y. Khrennikov and A. N. Kochubei.
\newblock On the $p$-adic Navier-Stokes equation.
\newblock Appl. Anal. 99 (2020), no. 8, 1425--1435.



 





















\bibitem[Koc01]{Koc01}
A. N. Kochubei.
\newblock Pseudo-differential equations and stochastics over non-Archimedean fields.
\newblock Monographs and Textbooks in Pure and Applied Mathematics, vol. 244, Marcel Dekker Inc., New York, 2001.






 






\bibitem[KrW18]{KrW18}
C. Kriegler and L. Weis.
\newblock Spectral multiplier theorems via $\H^\infty$ calculus and $R$-bounds.
\newblock Math. Z. 289 (2018), no. 1-2, 405--444.

\bibitem[KuU15]{KuU15}%
P. C. Kunstmann and M. Uhl.
\newblock Spectral multiplier theorems of H\"ormander type on Hardy and Lebesgue spaces.
\newblock J. Operator Theory 73 (2015), no. 1, 27--69.

\bibitem[KuW04]{KuW04}%
P. C. Kunstmann and L. Weis.
\newblock Maximal $L_p$-regularity for parabolic equations, Fourier multiplier theorems and $H^\infty$-functional calculus.
\newblock Functional analytic methods for evolution equations, 65--311, 
Lecture Notes in Math., 1855, Springer, Berlin, 2004.







\bibitem[LLLM98]{LLLM}
F. Lancien, G. Lancien and C. Le Merdy.
\newblock A joint functional calculus for sectorial operators with commuting resolvents.
\newblock Proc. London Math. Soc. (3) 77 (1998), no. 2, 387--414.

\bibitem[LeM99]{LeM99}
C. Le Merdy.
\newblock $H^\infty$-functional calculus and applications to maximal regularity.
\newblock Semi-groupes d'op\'erateurs et calcul fonctionnel (Besan\c con, 1998), 41--77, Publ. Math. UFR Sci. Tech. Besan\c con, 16, Univ. Franche-Comt\'e, Besan\c con, 1999.

\bibitem[LM03]{LM03}%
C. Le Merdy.
\newblock Two results about $H^\infty$ functional calculus on analytic UMD Banach spaces.
\newblock J. Aust. Math. Soc. 74 (2003), no. 3, 351--378.



\bibitem[LiT79]{LiT79}
J. Lindenstrauss and L. Tzafriri.
\newblock Classical Banach spaces. II. Function spaces.
\newblock Ergeb. Math. Grenzgeb., 97 [Results in Mathematics and Related Areas]
Springer-Verlag, Berlin-New York, 1979.

\bibitem[LPG20]{LPG20}
A. Lischke, G. Pang, M. Gulian et al.
\newblock What is the fractional Laplacian? A comparative review with new results.
\newblock J. Comput. Phys. 404 (2020), 109009, 62 pp.

\bibitem[Lor19]{Lor19}
E. Lorist.
\newblock The $\ell^s$-boundedness of a family of integral operators on UMD Banach function spaces.
\newblock Positivity and noncommutative analysis, 365--379, Trends Math., Birkh\"auser/Springer, Cham, 2019.























 






















\bibitem[Mar15]{Mar15}
A. Martini.
\newblock Spectral multipliers on Heisenberg-Reiter and related groups.
\newblock Ann. Mat. Pura Appl. (4) 194 (2015), no. 4, 1135--1155.

\bibitem[MaM13]{MaM13}%
A. Martini and D. M\"uller.
\newblock $L^p$ spectral multipliers on the free group $N_{3,2}$.
\newblock Studia Math. 217 (2013), no. 1, 41--55.

\bibitem[MaM14a]{MaM14a}
A. Martini and D. M\"uller.
\newblock A sharp multiplier theorem for Grushin operators in arbitrary dimensions.
\newblock Rev. Mat. Iberoam. 30 (2014), no. 4, 1265--1280.

\bibitem[MaM14b]{MaM14b}
A. Martini and D. M\"uller.
\newblock Spectral multiplier theorems of Euclidean type on new classes of two-step stratified groups.
\newblock Proc. Lond. Math. Soc. (3) 109 (2014), no. 5, 1229--1263.

\bibitem[MaM16]{MaM16}%
A. Martini and D. M\"uller.
\newblock Spectral multipliers on 2-step groups: topological versus homogeneous dimension.
\newblock Geom. Funct. Anal. 26 (2016), no. 2, 680--702.

\bibitem[MMN23]{MMN23}
A. Martini, D. M\"uller and S. Nicolussi Golo.
\newblock Spectral multipliers and wave equation for sub-Laplacians: lower regularity bounds of Euclidean type.
\newblock J. Eur. Math. Soc. (JEMS) 25 (2023), no.3, 785--843.



\bibitem[MaM90]{MaM90}%
G. Mauceri and S. Meda.
\newblock Vector-valued multipliers on stratified groups.
\newblock Rev. Mat. Iberoamericana 6 (1990), no. 3-4, 141--154.

\bibitem[Med90]{Med90}
S. Meda.
\newblock A general multiplier theorem.
\newblock Proc. Amer. Math. Soc. 110 (1990), no. 3, 639--647.








\bibitem[MuS94]{MuS94}%
D. M\"uller and E. M. Stein.
\newblock On spectral multipliers for Heisenberg and related groups.
\newblock J. Math. Pures Appl. (9) 73 (1994), no. 4, 413--440.

\bibitem[MPR07]{MPR07}%
D. M\"uller, M. M. Peloso and F. Ricci.
\newblock $L^p$-spectral multipliers for the Hodge Laplacian acting on 1-forms on the Heisenberg group.
\newblock Geom. Funct. Anal. 17 (2007), no. 3, 852--886.

\bibitem[MPR15]{MPR15}
D. M\"uller, M. M. Peloso, F. Ricci.
\newblock Analysis of the Hodge Laplacian on the Heisenberg group.
\newblock Mem. Amer. Math. Soc. 233 (2015), no. 1095.





 






\bibitem[Nie23]{Nie23}%
L. Niedorf.
\newblock Restriction type estimates and spectral multipliers on M\'etivier groups.
\newblock Preprint, arXiv:2304.12960.








 




\bibitem[OnQ89]{OnQ89}
C. W. Onneweer and T. S. Quek.
\newblock Multipliers on weighted $L^p$-spaces over locally compact Vilenkin groups.
\newblock Proc. Amer. Math. Soc. 105 (1989), no. 3, 622--631.






 



\bibitem[Onn88]{Onn88}
C. W. Onneweer.
\newblock Weak $L^p$-spaces and weighted norm inequalities for the Fourier transform on locally compact Vilenkin groups.
\newblock Proceedings of the analysis conference, Singapore 1986, 191--201, North-Holland Math. Stud., 150, North-Holland, Amsterdam, 1988.














\bibitem[Pis16]{Pis16}
G. Pisier.
\newblock Martingales in Banach spaces.
\newblock Cambridge Studies in Advanced Mathematics, 155. Cambridge University Press, Cambridge, 2016.


%










\bibitem[PSW18]{PSW}
J. Pr\"uss, G. Simonett and M. Wilke.
\newblock Critical spaces for quasilinear parabolic evolution equations and applications.
\newblock J. Differential Equations 264 (2018), no. 3, 2028--2074.

\bibitem[Que87]{Que87}
T. S. Quek.
\newblock Weighted norm inequalities for the Fourier transform on certain totally disconnected groups.
\newblock Proc. Amer. Math. Soc. 101 (1987), no. 1, 113--121.




\bibitem[RaV99]{RaV99}
D. Ramakrishnan and R. J. Valenza.
\newblock Fourier Analysis on Number Fields.
\newblock Graduate Texts in Mathematics 186, Springer-Verlag, New York Inc. (1999).












\bibitem[Rob00]{Rob00}
A. Robert.
\newblock A course in $p$-adic analysis.
\newblock Grad. Texts in Math., 198. Springer-Verlag, New York, 2000.

\bibitem[RoZ08]{RoZ08}
J. J. Rodriguez-Vega and W. A. Zuniga-Galindo.
\newblock Taibleson operators, $p$-adic parabolic equations and ultrametric diffusion.
\newblock Pacific J. Math. 237 (2008), no. 2, 327--347.




\bibitem[Rub86]{Rub86}
J. L. Rubio de Francia.
\newblock Martingale and integral transforms of Banach space valued functions.
\newblock Probability and Banach spaces (Zaragoza, 1985), 195--222. Lecture Notes in Math., 1221. Springer-Verlag, Berlin, 1986.



















 










\bibitem[SWS90]{SWS90}
F. Schipp, W. R. Wade and P. Simon.
\newblock Walsh series. An introduction to dyadic harmonic analysis. 
\newblock Adam Hilger, Ltd., Bristol, 1990. 



\bibitem[SeS89]{SeS89}
A. Seeger and C. D. Sogge.
\newblock On the boundedness of functions of (pseudo-) differential operators on compact manifolds.
\newblock Duke Math. J. 59 (1989), no.3, 709--736.



\bibitem[Spe70]{Spe70}
R. Spector.
\newblock Sur la structure locale des groupes ab\'{e}liens localement compacts. (French).
\newblock Bull. Soc. Math. France Suppl. Mém. 24 (1970), 94 pp. 

\bibitem[SBSW15a]{SBSW15a}
R. S. Stankovic, P. L. Butzer, F. Schipp and W. R. Wade.
\newblock Dyadic Walsh analysis from 1924 onwards--Walsh-Gibbs-Butzer dyadic differentiation in science. Vol. 1. Foundations. A monograph based on articles of the founding authors, reproduced in full. In collaboration with the co-authors: Weiyi Su, Yasushi Endow, Sandor Fridli, Boris I. Golubov, Franz Pichler and Kees Onneweer.
\newblock Atlantis Studies in Mathematics for Engineering and Science, 12. Atlantis Press, Paris, 2015.

\bibitem[SBSW15b]{SBSW15b}
R. S. Stankovic, P. L. Butzer, F. Schipp and W. R. Wade.
\newblock Dyadic Walsh analysis from 1924 onwards--Walsh-Gibbs-Butzer dyadic differentiation in science. Vol. 2. Extensions and generalizations. A monograph based on articles of the founding authors, reproduced in full. In collaboration with the co-authors: Weiyi Su, Yasushi Endow, Sandor Fridli, Boris I. Golubov, Franz Pichler and Kees Onneweer.
\newblock Atlantis Studies in Mathematics for Engineering and Science, 13. Atlantis Press, Paris, 2015.







\bibitem[Sik92]{Sik92}%
A. Sikora.
\newblock Multiplicateurs associ\'es aux souslaplaciens sur les groupes homog\`enes. (French).
\newblock C. R. Acad. Sci. Paris Sér. I Math. 315 (1992), no.4, 417--419.







\bibitem[Sog17]{Sog17}%
C. Sogge.
\newblock Fourier integrals in classical analysis. Second edition.
\newblock Cambridge Tracts in Math., 210. Cambridge University Press, Cambridge, 2017.




 




%
%





%






\bibitem[Ste70]{Ste70}
E. M. Stein.
\newblock Topics in harmonic analysis related to the Littlewood-Paley theory.
\newblock Annals of Mathematics Studies, No. 63 Princeton University Press,
Princeton, N.J.; University of Tokyo Press, Tokyo, 1970.















\bibitem[Sti19]{Sti19}
P. R. Stinga.
\newblock User's guide to the fractional Laplacian and the method of semigroups.
\newblock Handbook of fractional calculus with applications. Vol. 2, 235--265. De Gruyter, Berlin, 2019.


\bibitem[Tai68]{Tai68}
M. Taibleson.
\newblock Harmonic analysis on $n$-dimensional vector spaces over local fields. I. Basic results on fractional integration.
\newblock Math. Ann. 176 (1968), 191--207.

\bibitem[Tai70a]{Tai70a}
M. Taibleson.
\newblock Harmonic analysis on $n$-dimensional vector spaces over local fields. II. Generalized Gauss kernels and the Littlewood-Paley function.
\newblock Math. Ann. 186 (1970), 1--19.

\bibitem[Tai70b]{Tai70b}
M. Taibleson.
\newblock Harmonic analysis on $n$-dimensional vector spaces over local fields. III. Multipliers.
\newblock Math. Ann. 187 (1970), 259--271.

\bibitem[Tai75]{Tai75}
M. Taibleson.
\newblock Fourier analysis on local fields.
\newblock Princeton University Press, Princeton, N.J.; University of Tokyo Press, Tokyo, 1975.





























\bibitem[Vil63]{Vil63}
N. J. Vilenkin.
\newblock On a class of complete orthonormal systems.
\newblock Amer. Math. Soc. Transl. (2) 28 (1963) 1--35.

\bibitem[Vla99]{Vla99}
V.S. Vladimirov.
\newblock Tables of Integrals of Complex-valued Functions of $p$-Adic Arguments.
\newblock Preprint, arXiv:math-ph/9911027.

\bibitem[VlV89]{VlV89}
V. S. Vladimirov and I. V. Volovich.
\newblock $p$-adic quantum mechanics.
\newblock Comm. Math. Phys. 123 (1989), no. 4, 659--676.


\bibitem[VVZ94]{VVZ94}
V. S. Vladimirov, I. V. Volovich and E. I. Zelenov.
\newblock $p$-adic analysis and mathematical physics.
\newblock Ser. Soviet East European Math., 1, World Scientific Publishing Co., Inc., River Edge, NJ, 1994.




 



 







\bibitem[Wei07]{Wei07}
F. Weisz.
\newblock Almost everywhere convergence of Banach space-valued Vilenkin-Fourier series.
\newblock Acta Math. Hungar. 116 (2007), no. 1-2, 47--59.


\bibitem[Wil98]{Wil98}
J. S. Wilson.
\newblock Profinite groups.
\newblock London Mathematical Society Monographs. New Series, 19. The Clarendon Press, Oxford University Press, New York, 1998.






\bibitem[Xu07]{Xu07}%
X. Xu.
\newblock New proof of the H\"ormander multiplier theorem on compact manifolds without boundary.
\newblock Proc. Amer. Math. Soc. 135 (2007), no. 5, 1585--1595.








\bibitem[ZaZ23]{ZaZ23}
B. A. Zambrano-Luna and W. A. Zuniga-Galindo.
\newblock $p$-adic cellular neural networks: applications to image processing.
\newblock Phys. D 446 (2023), Paper No. 133668, 11 pp.

\bibitem[Zun16]{Zun16}
W. A. Zuniga-Galindo.  
\newblock Pseudodifferential equations over non-Archimedean spaces.
\newblock Lecture Notes in Mathematics, 2174. Springer, Cham, 2016.

\bibitem[Zun22a]{Zun22a}
W. A. Zuniga-Galindo.
\newblock Non-Archimedean quantum mechanics via quantum groups.
\newblock Nuclear Phys. B 985 (2022), Paper No. 116021, 21 pp.

\bibitem[Zun22b]{Zun22b}
W. A. Zuniga-Galindo.
\newblock $p$-adic analysis: a quick introduction.
\newblock $p$-adic analysis, arithmetic and singularities, 177--221. Contemp. Math., 778. American Mathematical Society, 2022.

\bibitem[Zun24a]{Zun24a}
W. A. Zuniga-Galindo.  
\newblock Deep Neural Networks: A Formulation Via non-Archimedean Analysis.
\newblock Preprint, arXiv:2402.00094.

\bibitem[Zun24b]{Zun24b}
W. A. Zuniga-Galindo.  
\newblock $\wp$-Adic Quantum Mechanics, the Dirac Equation, and the violation of Einstein causality.
\newblock J. Phys. A 57 (2024), no. 30, Paper No. 305301, 29 pp.

\bibitem[Zun25]{Zun25}
W. A. Zuniga-Galindo.  
\newblock $p$-Adic Analysis.
\newblock  Walter de Gruyter GmbH, Berlin/Boston, 2025.

\end{thebibliography}
\end{document}